\newcommand{\betanit}{p_{\textrm{init}}}
\newcommand{\revision}{}%
\renewcommand{\geq}{\geqslant}
\renewcommand{\leq}{\leqslant}
\DeclareMathOperator*{\argmax}{arg\,max}
\newtheorem{assumption}{Assumption}
\newtheorem{theorem}{Theorem}
\newtheorem{proposition}[theorem]{Proposition}
\newtheorem{corollary}[theorem]{Corollary}
\newtheorem{lemma}[theorem]{Lemma}
\theoremstyle{definition}
\newtheorem{definition}[theorem]{Definition}
\theoremstyle{remark}
\newtheorem{remark}[theorem]{Remark}
\def\mybigtimes{\mathop{\mathchoice{%
   \vcenter{\hbox to10bp{\vrule height15bp width0pt \pdfliteral{
   q 1 J .8 w 0 1 m 10 14 l S 0 14 m 10 1 l S Q
}\hss}}}{%
   \vcenter{\hbox to10bp{\kern1bp\vrule height10bp width0pt \pdfliteral{
   q 1 J .65 w 0 0 m 8 10 l S 0 10 m 8 0 l S Q
}\hss}}}{\times}{\times}%
}}
\begin{document}

\title{Optimal Strategy against Straightforward Bidding in Clock Auctions}
\author[1,2]{Jad Zeroual}
\ead{jad.zeroual@polytechnique.edu}
\author[2]{Marianne Akian}
\ead{marianne.akian@inria.fr}
\author[1]{Aurélien Bechler}
\ead{aurelien.bechler@orange.com}
\author[1]{Matthieu Chardy}
\ead{matthieu.chardy@orange.com}
\author[2]{St\'ephane Gaubert}
\ead{stephane.gaubert@inria.fr}

\affiliation[1]{organization={Orange Labs},
city = {Chatillon},
country = {France}}

\affiliation[2]{organization={INRIA and CMAP, \'Ecole polytechnique},
city = {Palaiseau},
country = {France}}

\begin{abstract}
We study a model of auction representative of the 5G auction in France. We determine the optimal strategy of a bidder, assuming that the valuations of competitors are unknown to this bidder and that competitors adopt the straightforward bidding strategy. Our model is based on a Partially Observable Markov Decision Process (POMDP). This POMDP admits a concise statistics, avoiding the solution of a dynamic programming equation in the space of beliefs. In addition, under this optimal strategy, the expected gain of the bidder does not decrease if competitors deviate from straightforward bidding. We illustrate our results by numerical experiments, comparing the value of the bidder with the value of a perfectly informed one.
\end{abstract}
\begin{keyword}
  Auction \sep Bidding Strategy \sep POMDP \sep Optimal Control
\end{keyword}
\maketitle

\section{Introduction}
\subsection{Context}
The acquisition of frequency spectrum is a vital aspect for telecommunications companies, as their core operations and success rely on these resources. Spectrum auctions have emerged as a prominent method for allocating these valuable bandwidths.
They have undergone significant evolution since their introduction, with various auction models being implemented over time ~\cite{auction_theory}. Initially, sealed-bid auctions were the preferred method for allocating spectrum rights. In this model, bidders would submit their bids simultaneously without knowing the bids of their competitors, the highest bidder winning the auction. However, this model was found to have limitations. For example, in auctions with several frequency bandwidths at stake (which is usually the case), bidders ended up paying very different amounts for the same goods (see page 8 of ~\cite{auction_to_work}). To address these limitations, auction models have evolved to accommodate more complex scenarios. One notable development was the introduction of combinatorial auctions, which allow bidders to bid on packages of items rather than individual items ~\cite{ausubel_ascending_nodate}. This innovation significantly improved the efficiency of spectrum allocation by enabling bidders to express their preferences for specific combinations of spectrum licenses. Among the various combinatorial auction formats, the \textit{combinatorial clock auction} (CCA) has emerged as a popular choice for spectrum auctions during the 4G era (see~\cite[Chapter 9]{nera}). The CCA combines the advantages of the clock auction, where prices increase in rounds until demand equals supply, with the flexibility of combinatorial bidding. This format allows bidders to adjust their bids in response to changing prices, while also considering complementarities and substitutabilities among the frequency bandwidths. More precisely, the CCA presents two phases: the first is a clock stage featuring a multi-round auction where each bidder bids for a unique package at each round and the price of each individual item is raised when the demand exceeds the offer of said item. When the bidding reaches a round with no excess demand, the CCA enters its supplementary round where bidders bid a last time for a package of items with respect to the prices of the precedent phase ~\cite{ausubel_clock_auction}. In this context, ~\cite{auction_theory,milgrom_substitute_2009} provide an introduction to auction theory and ~\cite{levin_properties_2016} further explores this issue, suggesting that a truthful strategy for CCA would not be optimal in the general setting but yield efficient results. 
In the 5G era, new takes on this format have emerged and gained popularity mainly due to its complexity. This was emphasized by NERA, the organizers of the 2013 4G Singapore auction, when they believed CCA would give too much of an advantage to well-established competitor whilst the regulator was hoping to strengthen the impact of smaller players (see~\cite[Chapter 9]{nera}). therefore, they implemented a new clock auction format: the Clock Plus auction. In those cases, bidders can only bid on the number of items they want to acquire within a category rather than the exact bundle of items. This type of format was also used as part of the 5G Auctions in France in 2020 with only one category of frequency bandwidth ~\cite{attribution_freq}. Indeed, telecommunication operators took part in a Clock Plus auction to decide how many frequency bandwidths will be allocated to each before participating in an assignment auction.

The evolution of spectrum auctions has also been motivated by the matter of \textit{optimality} which can be defined differently: it could be to maximize revenue for the auctioneer ~\cite{optimal_auctions}, to maximize the \textit{fairness} of the auction ~\cite{jain1998quantitative}
or to maximize one player's profit selfishly. One way it has been studied is through prophet inequalities, which are inequalities between a strategic allocation and the optimal allocation deduced by an \textit{oracle}, a player who is perfectly informed and has perfect knowledge on future states. Those inequalities are widely studied in the literature for different forms of bidder's preference but almost always in a mechanism design perspective, so as to maximize social welfare ~\cite{dutting_olog_2020,eden_constant_2023}. %
However, few have studied the question of optimality in a competitive auction where a player wants to selfishly maximize its own utility. An example of such a study can be found in ~\cite{noauthor_mixed_nodate} which proposes a Mixed Linear Integer Programming approach in a perfect information setting. Nonetheless, the perfect information setting case presents a difficulty: companies do not disclose their valuations, i.e.\ their preferences among frequency bandwidths, to their competitors in order to keep a competitive edge. As a matter of fact, such valuations can provide strategic information on a company's long-term projects. Thus, we can only have some coarse estimates of the opponent valuations, noting that data are generally insufficient to infer such estimations ~\cite{attribution_freq}. Hence, more practical approaches in imperfect information setting are used such as Partially Observable Markov Decision Processes (POMDP). In ~\cite{boutilier_pomdp}, this framework is used for inferring the utility of each player. Those models have been rare for modeling auction competition. 

One of the possible reasons is that such models require to make assumptions on the behavior of competitors. A commonly studied behavior is the  \textit{Straightforward Bidding} (SB) introduced in
~\cite{auction_to_work}. %
It is characterized by its focus on immediate gains without considering future implications. As a matter of fact, this strategy has been used as a baseline and is the default strategy to model auction in the literature. For instance, ad auctions are modeled among SB-players in ~\cite{cai_mdp} and the SB strategy is used to compare the agents performance in ~\cite{reeves_exploring_2005}. This strategy has been at the heart of studies for two reasons. First, it is an intuitive behavior to adopt in auctions, in ~\cite{ausubel_ascending_2002}, the authors mention that it is consistent with experimental auctions. And second, despite its simplicity, it can be optimal in various situations. For instance, SB is proven to be a weakly dominant strategy in auctions of one item ~\cite{bikhchandani_competitive_1997}. Moreover, if each bidder demands a single item and has no preference for any of them, SB is a Bayes-Nash equilibrium ~\cite{Wellman}. What is more, this strategy yields to a situation comparable to a competitive equilibrium when the goods are substitutes for all bidders ~\cite{auction_to_work}, which is the case for the 5G auction we study ~\cite{attribution_freq}. This strategy also ensures truthfulness. A bidder bids truthfully when they are willing to share their valuation with their competitors, SB is thus an example of truthful strategies ~\cite{auction_to_work}.

This paper aims to explore what could be the optimal response against SB opponents for a bidder taking part in a clock auction such as the French 5G auction. Indeed, the SB strategy is interesting as it is a simple yet efficient strategy ~\cite{bulow_winning_nodate} for the auction at hand.

\subsection{Contribution}
We introduce a POMDP formulation for a particular kind of clock auctions, with {\em identical items}. We compute the optimal strategy of a player against straightforward bidders which we call the Bellman strategy. Our first result (\Cref{main_theorem}) provides a simplified expression of the latter under some assumptions on the distribution of the opponents' preferences. It shows that the optimal solution of the POMDP can be expressed through a concise form avoiding the recourse to the belief space. Secondly, we show that the Bellman strategy yields a better result when all players play according to that strategy.
Lastly, we explore the results of this strategy when the assumptions of the theorem are satisfied and show empirical evidence of its performance.

The paper is organized as follows: \Cref{5G} presents the studied auction, the SB strategy and how it is modeled in the rest of the paper. We detail the POMDP model in \Cref{pomdp_model}. It is then solved using dynamic programming equation and we provide a simple form of the solution in \Cref{main_theorem} in the case where the opponents' demands satisfy Markov property (\Cref{MP}). We derive further properties of the optimal strategy in the rest of \Cref{Bellman} under the same assumption.
Our second main result is presented in \Cref{robust}: \Cref{theorem3} states that under the optimal strategy, the expected gain of the bidder does not decrease when the other bidders deviate from Straightforward Bidding, and in particular when all players are utilizing the optimal strategy. This results is valid under \Cref{SM}.
In \Cref{Models}, we give some examples of probabilistic models which satisfy both assumptions of our previous results.
Lastly, \Cref{Example} applies those results to simulated auctions allowing one to compare the performance of this bidding strategy to the one of a perfectly informed bidder.

\section{5G auction in France}\label{5G}
\subsection{Auction mechanism}
We model an auction amongst $n$ players for $m$ items inspired by the clock auction held for the 5G auction in France in 2020 ~\cite{attribution_freq}. We denote each player by an integer $i \in \{1, \dots, n\}$.
The auction begins at a certain price $\betanit \geq 0$. A price increment $\Delta P > 0$ between successive rounds is fixed in advance.

The auction mechanism is the following:
\begin{enumerate}
    \item The auction starts at price $\betanit$, we set $p \longleftarrow \betanit$
    \item Each player $i$ asks for a number of items $\delta_i(p)$. This is their \textit{demand} or \textit{bid}.
    All demands are simultaneous.
    \item 
    We check if the total demand does not exceed the number of items, i.e.\ if $\sum_{i = 1}^n \delta_i(p) \leq m$:
    \begin{itemize}[-]
        \item If it is the case, the auction terminates and each player $i$ receives $\delta_i(p)$ items and pays $\delta_i(p) \times p$.
        \item otherwise, the price is raised, $p \longleftarrow p+\Delta P$,
          and the auction moves to the next round (resuming from step 2).
    \end{itemize}
\end{enumerate}

\begin{remark}
    In practice, $\Delta P$ can vary during the auction. This would allow the auctioneer to account for consequent drop in demand from the players. In our case, we suppose the increment is constant.
\end{remark}

Moreover, the auction presents an \emph{eligibility} rule: it is mandatory for player $i$'s demand to be non-increasing, i.e.\ $\forall p \geq 0, \delta_i(p+\Delta P) \leq \delta_i(p)$.

This auction presents both \emph{public information} and \emph{private information}:
\begin{itemize}[-]
    \item In our setting, the total demand is revealed at the end of the round. At round $t$, the past demands are public information, i.e.\ $\{ \sum_{i = 1}^n \delta_i(\betanit + s\Delta P) | 0 \leq s < t\}$  is known by all players.
    \item Each player $i$ has a budget $B_i$. This is a private information. Every player is under a \emph{budget constraint}: one's payment cannot exceed one's private budget.
\end{itemize}

Following the spirit of the literature ~\cite{auction_theory,auction_to_work,milgrom_substitute_2009,correa}, we model the preferences of each player $i$ by a \emph{valuation} function $v_i : \{0, \dots, m\} \longrightarrow \mathbb{R}_+$. This valuation represents a maximal price that the player $i$ is willing to pay for $k$ items. It is also private information.

\begin{assumption}
  We suppose the valuations are normalized, i.e.\
  $v_i(0)=0$ for all $i \in \{1, \dots, n\}$.
\end{assumption}

In other words, the maximal price any player is willing to pay for acquiring nothing is \EUR{0}.

Furthermore, we introduce the utility $u_i(k, p) = v_i(k) - kp$. Each agent $i$ wants to maximize this utility within the constraints of the auction.

\subsection{Straightforward bidding}
In our study, we suppose all but one player play according to a strategy called Straightforward Bidding (SB) ~\cite{auction_to_work}. %
This strategy is a myopic strategy: it consists in maximizing one's utility at each round, as if the auction would terminate immediately. The player handles possible tie breaks by taking the lowest number of items that maximizes their utility.
In our case in which there is a single type of items, SB can be formulated as follows.
\begin{definition}\label{SB_def}
  The player $i$ is said to be playing SB if
    $$\forall p \geq 0, \delta_i(p) = \min \Big(\argmax_{0 \leq k \leq m}\{ v_i(k) - kp \}\Big)$$
\end{definition}
One can notice that in \Cref{SB_def},
$\delta_i$ only depends on the map 
$p\geq 0\mapsto \max_{0 \leq k \leq m}\{ v_i(k) - kp \}$. This is precisely the Legendre-Fenchel transform of $v_i$ (up to a change of sign), restricted to the non-negative real numbers. Hence, $\delta_i$ only depends on the non-decreasing concave hull of the private valuation $v_i$.
This is formalized by the following result.

\begin{proposition}\label{Prop_concave} Suppose player $i$ plays SB. 
  Let
  \[
  \Breve{v}_i = \inf \{f : \{0, \dots, m\} \rightarrow \mathbb{R} \mid  f \text{ is non decreasing, concave and } f \geq v_i \}
  \]
  and for all $p \geq 0$, define
  \( \Breve{\delta}_i(p) = \min \Big(\argmax_{0 \leq k \leq m}\{ \Breve{v}_i(k) - kp \}\Big)\).
  Then,
  \begin{enumerate}
    
\item Let $k_0 \in \{1, \dots, m-1\}$ such as There exists $p$ satisfying $\delta_i(p) = k_0$. 
    Then, $v_i$ is \emph{locally strictly concave} in $k_0$ (meaning $v_i(k_0) - v_i(k_0-1) > v_i(k_0+1) - v_i(k_0)$). 
    Moreover, for such a $k_0$, $\Breve{v}_i(k_0)=v_i(k_0)$.

\item    for all $p \geq 0$, we have $\delta_i(p) = \Breve{\delta}_i(p)$.
    \end{enumerate}
\end{proposition}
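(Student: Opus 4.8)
The plan is to exploit one elementary fact in both directions: a point that maximizes $v_i(\cdot)-\cdot\,p$ for some price $p\ge 0$ must lie on the non-decreasing concave hull $\Breve v_i$. Throughout I fix the player $i$ and abbreviate $M(p)=\max_{0\le k\le m}\{v_i(k)-kp\}$, $\Breve M(p)=\max_{0\le k\le m}\{\Breve v_i(k)-kp\}$, writing $A(p)$ and $\Breve A(p)$ for the two argmax sets. I would first record the normalization $\Breve v_i(0)=v_i(0)=0$: any non-decreasing concave majorant of $v_i$ may be lowered at $0$ down to $v_i(0)$ without breaking concavity (this only enlarges the first increment) nor monotonicity, so the infimum takes value $0$ there. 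The main tool is a supporting-line lemma: for $p\ge0$ and any $k_0\in A(p)$, the affine map $\ell(k)=v_i(k_0)+(k-k_0)\,p$ satisfies $v_i\le\ell$ (since $k_0$ is a maximizer), is concave, and is non-decreasing (slope $p\ge0$); hence $\ell$ lies in the family defining $\Breve v_i$, so $\Breve v_i\le\ell$. Evaluating at $k_0$ yields $\Breve v_i(k_0)=v_i(k_0)$, and for every $k$ one gets $\Breve v_i(k)-kp\le\ell(k)-kp=M(p)$, whence $\Breve M(p)\le M(p)$ and therefore $\Breve M(p)=M(p)$.

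For the first assertion, suppose $\delta_i(p)=k_0$ with $1\le k_0\le m-1$. Comparing $k_0$ with its neighbours gives $v_i(k_0)-v_i(k_0-1)\ge p\ge v_i(k_0+1)-v_i(k_0)$; the left inequality is strict, for otherwise $k_0-1$ would also maximize $v_i(\cdot)-\cdot\,p$, contradicting $k_0=\min A(p)$. This produces $v_i(k_0+1)-v_i(k_0)\le p<v_i(k_0)-v_i(k_0-1)$, which is the asserted local strict concavity, and the identity $\Breve v_i(k_0)=v_i(k_0)$ is exactly the supporting-line lemma, since $k_0\in A(p)$.

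For the second assertion I would prove $\delta_i(p)=\min A(p)$ equals $\Breve\delta_i(p)=\min\Breve A(p)$ by two inequalities. If $k\in A(p)$, then $\Breve v_i(k)-kp\ge v_i(k)-kp=M(p)=\Breve M(p)$, so $k\in\Breve A(p)$; thus $A(p)\subseteq\Breve A(p)$ and $\min\Breve A(p)\le\delta_i(p)$. For the reverse inequality it suffices to show that $a:=\min\Breve A(p)$ satisfies $\Breve v_i(a)=v_i(a)$, for then $v_i(a)-ap=\Breve M(p)=M(p)$ places $a\in A(p)$ and forces $\delta_i(p)\le a$. The case $a=0$ is the normalization above. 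If $a\ge1$, then $a-1\notin\Breve A(p)$ gives the left increment $\Breve v_i(a)-\Breve v_i(a-1)>p\ge0$, while concavity together with maximality gives a right increment at most $p$ (or, if $a=m$, no constraint from the right).

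The hard part is the final step, namely showing that this leftmost hull-maximizer is a genuine vertex of $\Breve v_i$. Suppose toward a contradiction that $\Breve v_i(a)>v_i(a)$, and lower $\Breve v_i$ at the single point $a$ by a small $\varepsilon>0$. The strict gap between the left increment ($>p$) and the right increment ($\le p$) preserves concavity at $a$; the concavity conditions at $a\pm1$ are only relaxed by this perturbation; the inequality $\Breve v_i(a)-\Breve v_i(a-1)>0$ preserves monotonicity; and $\Breve v_i(a)>v_i(a)$ preserves domination for $\varepsilon$ small enough. This exhibits a non-decreasing concave majorant of $v_i$ strictly below $\Breve v_i$ at $a$, contradicting the minimality of $\Breve v_i$. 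Hence $\Breve v_i(a)=v_i(a)$, which closes the reverse inequality and completes the proof. I expect the bookkeeping in this perturbation, especially the uniform handling of the interior and boundary ($a=m$) cases, to be the only delicate point; the rest is the supporting-line lemma applied twice.
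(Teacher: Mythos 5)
Your proof is correct, and for the first assertion it coincides with the paper's (the two neighbour inequalities, with strictness on the left coming from $k_0=\min A(p)$). Where you genuinely diverge is in the second assertion and in the justification of $\Breve v_i(k_0)=v_i(k_0)$. The paper disposes of both in one line each: it asserts $\Breve v_i(k_0)=v_i(k_0)$ ``by definition of $\Breve v_i$ and since $v_i$ is concave in $k_0$'', and deduces $\Breve\delta_i=\delta_i$ solely from the fact that the hull touches $v_i$ at every point in the range of $\delta_i$. Your supporting-line lemma (the affine map $\ell(k)=v_i(k_0)+(k-k_0)p$ is a non-decreasing concave majorant, hence $\Breve v_i\le\ell$ and $\Breve M(p)=M(p)$) makes the first of these rigorous. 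More importantly, your perturbation argument closes a real gap in the second: the paper's implication does not by itself exclude that some $j<\delta_i(p)$ with $\Breve v_i(j)>v_i(j)$ lies in $\Breve A(p)$, which would make $\Breve\delta_i(p)<\delta_i(p)$. Your argument that $a=\min\Breve A(p)$ has a left increment strictly above $p$ and a right increment at most $p$, so that $\Breve v_i$ could be lowered at $a$ if it did not touch $v_i$ there, is exactly the missing step; it shows the leftmost hull-maximizer is a contact point and hence belongs to $A(p)$. The cost of your route is the bookkeeping in the perturbation (boundary cases $a=0$ and $a=m$, and the implicit fact that $\Breve v_i$ is itself a non-decreasing concave majorant, which both you and the paper take for granted); the benefit is a proof of part~2 that actually establishes both inequalities $\min\Breve A(p)\le\delta_i(p)$ and $\delta_i(p)\le\min\Breve A(p)$ rather than asserting the equality.
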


\begin{proof}
    \begin{enumerate}
        \item Let $k_0 = \min(\argmax_{0 \leq k \leq m}\{v_i(k) - k p\})$. Then, 
        \begin{align*}
            v_i(k_0) - k_0 p &\geq v_i(k_0 +1) - (k_0 + 1) p\\
            p &\geq v_i(k_0+1) - v_i(k_0)\\
            v_i(k_0) - k_0 p &> v_i(k_0 -1) - (k_0 - 1) p\\
            v_i(k_0) - v_i(k_0 - 1) &> p
        \end{align*}
        Thus, $v_i(k_0) - v_i(k_0 - 1) > v_i(k_0+1) - v_i(k_0)$. By definition of $\Breve{v}_i$ and since $v_i$ is concave in $k_0$, $v_i(k_0) = \Breve{v}_i(k_0)$.
        \item $\forall k \in \{0, \dots, m\}, \big(\exists p \geq 0, \delta_i(p) = k\big) \implies \big(\Breve{v}_i(k) = v_i(k)\big)$ hence $\Breve{\delta}_i = \delta_i$.
    \end{enumerate}
\end{proof}

\begin{corollary}\label{model_val}%
 Suppose player $i$ plays according to SB. 
We can model their valuation by a function of the form ${v_i}(k) = \sum_{j = 1}^k z^i_j$ where $z^i_1 \geq z^i_2 \geq \dots \geq z^i_m \geq 0$ and in this case, $\delta_i(p) = \sum_{j = 1}^m \mathbf{1}(z^i_j - p > 0)$.
\end{corollary}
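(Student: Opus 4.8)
The plan is to build directly on \Cref{Prop_concave}, which shows that the straightforward demand $\delta_i$ depends on $v_i$ only through its non-decreasing concave hull $\Breve{v}_i$, since $\delta_i = \Breve{\delta}_i$. Because only the induced demand matters for describing SB behavior, I would argue that one may replace $v_i$ by $\Breve{v}_i$ without loss of generality: the two valuations prescribe exactly the same bid at every price. The statement ``we can model their valuation'' is then made rigorous by exhibiting $\Breve{v}_i$ in the announced additive form, and the rest of the proof reduces to a short computation of the corresponding demand.

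First I would introduce the successive increments $z^i_j := \Breve{v}_i(j) - \Breve{v}_i(j-1)$ for $j \in \{1, \dots, m\}$. A telescoping sum, together with the normalization $\Breve{v}_i(0) = v_i(0) = 0$ inherited from Assumption~1 (the leftmost point always lies on the concave hull, and monotonicity does not raise its value), yields $\Breve{v}_i(k) = \sum_{j=1}^k z^i_j$, which is the desired shape. The two constraints then follow from the defining properties of the hull: concavity of $\Breve{v}_i$ forces the increments to be non-increasing, i.e.\ $z^i_1 \geq z^i_2 \geq \dots \geq z^i_m$, while $\Breve{v}_i$ being non-decreasing gives $z^i_m \geq 0$, hence $z^i_j \geq 0$ for every $j$.

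It then remains to evaluate $\delta_i(p) = \min\!\big(\argmax_{0 \leq k \leq m}\{\Breve{v}_i(k) - kp\}\big)$. Writing $\Breve{v}_i(k) - kp = \sum_{j=1}^k (z^i_j - p)$, I would observe that the summands $z^i_j - p$ are non-increasing in $j$, so that increasing $k$ strictly raises the objective while $z^i_j - p > 0$, leaves it unchanged when $z^i_j - p = 0$, and strictly lowers it once $z^i_j - p < 0$. Consequently the set of maximizers is a contiguous block of integers whose smallest element is the count of strictly positive increments, namely $\sum_{j=1}^m \mathbf{1}(z^i_j - p > 0)$. The one delicate point, and the place where I would be most careful, is this tie-breaking at the boundary: indices $j$ with $z^i_j = p$ contribute a zero term and thus belong to the $\argmax$ without being selected by the $\min$ prescribed in \Cref{SB_def}, which is exactly why the indicator carries a \emph{strict} inequality rather than a weak one.
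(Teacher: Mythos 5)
Your proposal is correct and follows essentially the same route as the paper's proof: invoke \Cref{Prop_concave} to replace $v_i$ by its non-decreasing concave hull, define the increments $z^i_j$ by telescoping, and identify $\delta_i(p)$ as the count of increments exceeding $p$. Your explicit discussion of the tie-breaking at indices with $z^i_j = p$ is a welcome extra degree of care, but it does not change the substance of the argument.
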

\begin{proof} \revision{According to \Cref{Prop_concave},} we can model the valuation \revision{$v_i$} of player $i$ as a non-decreasing and \revision{concave} function $\hat{v}_i$. \revision{Indeed, $\forall p\geq 0, \hat{\delta}_i(p) = \delta_i(p)$, hence considering $\hat{v}_i$ rather than $v_i$ does not change the demands of player $i$. In the rest of the proof, we consider that $v_i = \hat{v}_i$.}

Then, for all $k \in \{0, \dots, m \}$, ${v}_i (k) = \sum_{j = 1}^k z^i_j$ where for all $j \in \{1, \dots, m\}$, $z^i_j = {v}_i(j) - {v}_i(j-1)$. The fact that $v_i$ is non-decreasing, non-negative \revision{and concave} ensures that $z_1^i \geq z_2^i \geq \dots \geq z_m^i \geq 0$.
Furthermore,
\begin{align*}
     \delta_i(p) &= \min \Big(\argmax_{0 \leq k \leq m}\{ {v}_i(k) - kp \}\Big) %
     = \min \Big(\argmax_{0 \leq k \leq m}\{ \sum_{j = 1}^k (z_j^i - p) \}\Big)\\
     &= \begin{cases}
         \max\{j \in \{1, \dots, m\} : z_j^i - p > 0\} &\text{ if the set is not empty}\\
         0 &\text{ otherwise}
     \end{cases} \\
     &= \sum_{j = 1}^m \mathbf{1}(z_j^i - p > 0)%
 \end{align*}
\end{proof}
This corollary can be interpreted geometrically as shown in \Cref{fig:valuation}: the $(z_j^i)$ represent the slopes of the secret valuation $v_i$. An SB demand consists in counting the number of slopes that are steeper than the current price.
\begin{figure}[H]
    \centering
\includegraphics[width=0.5\linewidth]{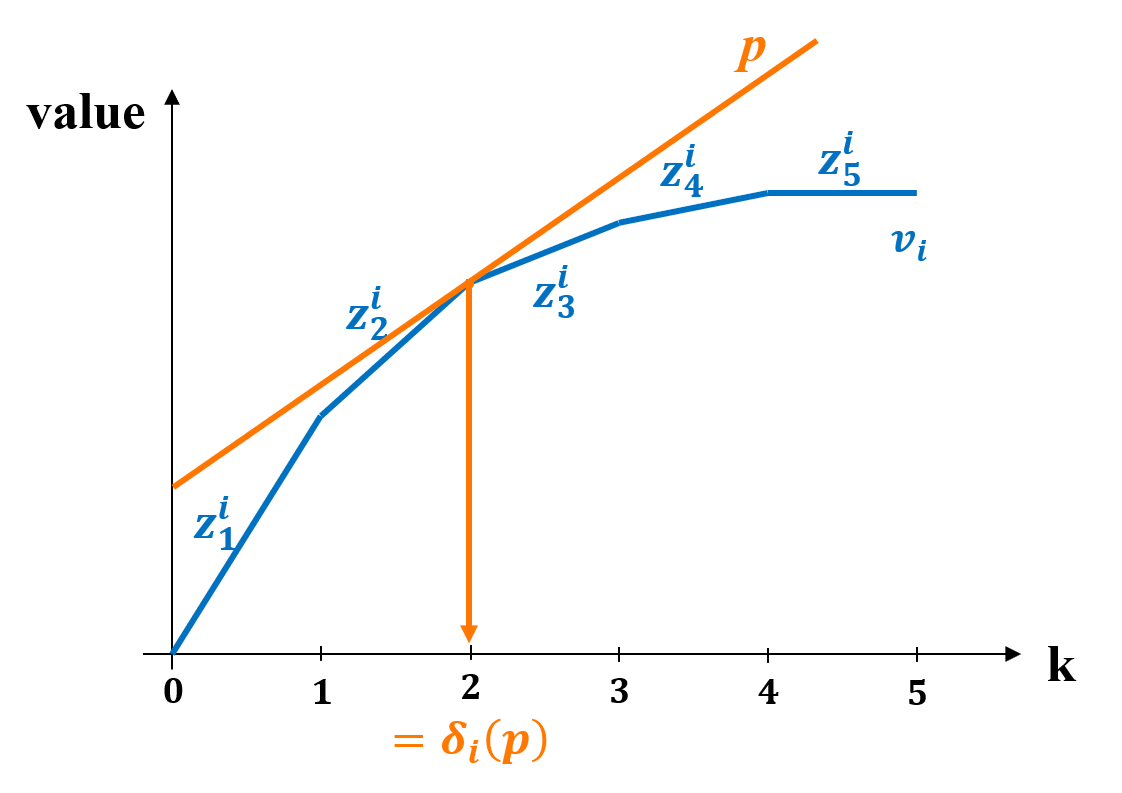}
    \caption{Visualizing the response to a price offer $p$ on the Newton polygon of the valuation map}
    \label{fig:valuation}
\end{figure}
\begin{remark}
    With \Cref{model_val}, we represent the concave hull of the discrete map $k\mapsto v_i(k)$, which is known as the {\em Newton polygon} of $v_i$. The values $(z^i_j)_{1\leq j \leq 5}$ coincide with the successive slopes of the segments of the Newton polygon. They are special (one dimensional) instances of the notion of {\em indifference locus}, studied in~\cite{baldwin_klemperer_tropical} using tools of tropical geometry: those are the prices at which player $i$ is indifferent between acquiring $j$ or $j+1$ items.
The demand at price $p$ can also be interpreted on the Newton polygon. It is the number $k$ (there in \Cref{fig:valuation} $k = 2$) for which there is a supporting line of slope $p$ touching the Newton polygon at point $(k,v_i(k))$.
\end{remark}

Thereafter, \revision{in virtue on \Cref{model_val}, }we will model the valuation of an SB agent as a concave and non-decreasing function as it \revision{ does not reduce the generality of the model}. This \revision{is formalized in} the following assumption:

\begin{assumption}
    Every valuation function $v_i$ is concave and non-decreasing, i.e.\ can be written in the form $v_i(k) = \sum_{j =1}^k z^i_j$ with $z^i_1 \geq \dots \geq z^i_m$.
\end{assumption}

Since in our setting, the total demand is revealed, we model all the SB-players as one super player.

\begin{definition}
Let $J_1$,$\dots$,$J_r$ be players with respective demand functions $\delta_1$, $\dots$,$\delta_r$. We call an aggregation of players $J_1$,$\dots$, $J_r$ a player whose demand function is $\sum_{i = 1}^r \delta_i$.
\end{definition}

\begin{proposition}
    An aggregation of SB-players can be viewed as an SB-player. We call it a super SB-player.
\end{proposition}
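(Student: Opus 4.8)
The plan is to reduce everything to the indicator-sum representation of the SB demand provided by \Cref{model_val}. First I would invoke that corollary to write each individual demand as $\delta_i(p) = \sum_{j=1}^m \mathbf{1}(z_j^i - p > 0)$, where the slopes satisfy $z_1^i \geq \dots \geq z_m^i \geq 0$. Summing over the $r$ aggregated players immediately gives
\[
\sum_{i=1}^r \delta_i(p) = \sum_{i=1}^r \sum_{j=1}^m \mathbf{1}(z_j^i - p > 0),
\]
so that the aggregate demand is a sum of $rm$ indicator functions, one for each slope $z_j^i$ in the combined multiset of all players' slopes.

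The second step is to recognize this sum as an SB demand in its own right. I would collect the $rm$ slopes $\{z_j^i\}$ into a single family and relabel them in non-increasing order as $w_1 \geq w_2 \geq \dots \geq w_{rm} \geq 0$. Reindexing the sum by value rather than by the pair $(i,j)$ leaves it unchanged, so the aggregate demand equals $\sum_{l=1}^{rm} \mathbf{1}(w_l - p > 0)$.

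Finally I would exhibit the super player explicitly: let it be a player who may acquire up to $rm$ items, with valuation $V(k) = \sum_{l=1}^k w_l$. Because $w_1 \geq \dots \geq w_{rm} \geq 0$, the map $V$ is non-decreasing and concave, hence it satisfies the hypotheses of \Cref{model_val}; applying that corollary, the SB demand of this player is precisely $\sum_{l=1}^{rm} \mathbf{1}(w_l - p > 0)$, which we have just shown equals $\sum_{i=1}^r \delta_i(p)$. Thus the aggregation is itself an SB-player, whose valuation is obtained simply by merging and sorting the slopes of the constituent valuations.

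There is no genuinely hard step here; the argument is essentially bookkeeping. The only points requiring care are that the super player's item ceiling must be taken as the sum $rm$ of the individual ceilings rather than $m$, and that the concavity of $V$ is exactly what the sorting of the merged slopes guarantees, so that \Cref{model_val} indeed applies to the merged valuation. One may also note in passing that the eligibility (non-increasing) condition holds automatically for the super player, since each $\mathbf{1}(w_l - p > 0)$ is non-increasing in $p$.
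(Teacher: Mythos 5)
Your proposal is correct and follows essentially the same route as the paper: apply \Cref{model_val} to each bidder, merge and sort the $rm$ slopes, and read off the merged indicator sum as the SB demand of a player with valuation $V(k)=\sum_{l=1}^k w_l$. The only cosmetic difference is that the paper re-derives the identity $\sum_{l=1}^{rm}\mathbf{1}(w_l-p>0)=\min\big(\argmax_{0\leq k\leq rm}\{V(k)-kp\}\big)$ explicitly, whereas you invoke the computation inside \Cref{model_val} in the converse direction; both amount to the same bookkeeping.
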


\begin{proof}
    Let $r \in \mathbb{N}$. Let $\delta_1$,$\dots$,$\delta_r$ be $r$ demand functions of SB-players. We denote by $v_1$,$\dots$,$v_r$ their valuation functions.

    According to Corollary \ref{model_val}, There exists $(z^{(i)}_j)_{\substack{1 \leq i \leq r \\ 1 \leq j \leq m}}$ such as $\forall i \in \{1, \dots, r\}$, $z^{(i)}_1 \geq \dots \geq z^{(i)}_m \geq 0$, $v_i(k) = \sum_{j=1}^k z^{(i)}_j$ and $\delta_i(p) = \sum_{j = 1}^m \mathbf{1}(z^{(i)}_j - p > 0)$.

    We reorder the sequence $(z^{(1)}_1, \dots, z^{(1)}_m, z^{(2)}_1, \dots, z^{(i)}_j, \dots, z^{(r)}_m)$ to a sequence $(z_1, \dots, z_{rm})$ such as $z_1 \geq \dots \geq z_{rm}$.

    The aggregated demand function of the SB-players is $\delta = \sum_{i = 1}^r \delta_i$. Hence,
    \begin{align*}
        \delta(p) &= \sum_{i = 1}^r \delta_i(p)\\
        &= \sum_{i = 1}^r \sum_{j = 1}^m \mathbf{1}(z^{(i)}_j - p > 0)\\
        &= \sum_{j = 1}^{rm} \mathbf{1}(z_j - p > 0)
    \end{align*}
    
    Let $\forall k \in \{0, \dots, rm\}, v(k) = \sum_{j = 1}^k z_j$. Then, $\delta(p) = \sum_{j = 1}^{rm} \mathbf{1}(z_j - p > 0) = \min\Big(\argmax_{0 \leq k \leq rm}\{ v(k) - kp\}\Big)$. Hence, the aggregation of SB-players can be seen as a SB-player of valuation $v$.
\end{proof}

\begin{remark}
    In this proof, we introduced the valuation $v$ of a super SB-player. One may note that \[ v(k) = \sup_{l_1 + \dots+ l_r = k}\sum_{i = 1}^r v_i(l_i). \]
    This form shows that the valuation of a super SB-player is the sup-convolution of valuations of SB-players. Geometrically, this can be seen on the graph of the super player's valuation, which is the Minkowski sum of the graphs of all the players' valuations.
\end{remark}

\subsection{Playing against a super SB-player}

Our goal is to find a strategy for the non-SB-player in the auction. In the rest, we call the super SB-player \emph{the opponent} and the non-SB-player \emph{the player}. The latter's private valuation is $v$.

\begin{proposition}
    From the player perspective, the auction terminates in a finite number of rounds $R$:
    \[ R \leq 1+ \lceil \Delta P^{-1} \Big( v(1) - \betanit \Big) \rceil =: 1+\overline{R} \]
    The auction terminates in the sense that from round $\overline{R}$, the player can no longer make relevant strategic decisions.
\end{proposition}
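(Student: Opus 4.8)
The plan is to exploit the concave, non-decreasing shape that our standing assumptions impose on the player's valuation $v$. Writing $v(k)=\sum_{j=1}^{k}z_j$ with $z_1\ge z_2\ge\dots\ge z_m\ge 0$, the normalization $v(0)=0$ yields $z_1=v(1)$, so that $v(1)$ is the \emph{largest} marginal value. The whole argument rests on one observation: as soon as the clock price reaches $v(1)$, no demand can secure the player a strictly positive utility, so dropping out is forced and no genuine strategic choice remains.

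To make this precise I would first establish that, for every price $p\ge v(1)$ and every $k\in\{1,\dots,m\}$,
\[
 v(k)-kp=\sum_{j=1}^{k}(z_j-p)\le k\,(z_1-p)\le 0 ,
\]
where the first inequality uses $z_j\le z_1$ and the second uses $z_1=v(1)\le p$. Since the empty demand gives $v(0)-0\cdot p=0$, the best utility the player can guarantee at such a price is $0$, attained by demanding nothing; hence the utility-maximizing action is to leave the auction, and its outcome (zero items, zero payment) is entirely determined. Next I would locate the round at which this threshold is crossed: with the price at round $s$ equal to $\betanit+s\,\Delta P$, the inequality $\betanit+s\,\Delta P\ge v(1)$ is equivalent to $s\ge \Delta P^{-1}\bigl(v(1)-\betanit\bigr)$, and since $\overline{R}=\lceil \Delta P^{-1}(v(1)-\betanit)\rceil$ is the least integer meeting this bound, the price is $\ge v(1)$ for \emph{every} round $s\ge\overline{R}$.

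Combining the two steps gives the claim: from round $\overline{R}$ onward the player is forced to demand $0$, so the only rounds in which a strategic decision can matter are $0,1,\dots,\overline{R}-1$, and the player's fate is sealed by round $\overline{R}$; counting the initial round this amounts to at most $1+\overline{R}$ rounds, i.e.\ $R\le 1+\overline{R}$. The one point requiring a little care is the boundary case in which $\Delta P^{-1}(v(1)-\betanit)$ is an integer, so that the price equals $v(1)$ \emph{exactly} at round $\overline{R}$: there the single-item demand yields utility exactly $0$ rather than a negative value, but since the argument only needs the absence of a \emph{strictly positive} payoff to justify dropping out, this case is covered by the same inequality and the conclusion is unchanged. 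No deeper difficulty arises; the proof is elementary once concavity is used to pin the maximal marginal value at $z_1=v(1)$, and the only bookkeeping to get right is the interplay of the ceiling with the strict-versus-weak inequalities at the threshold.
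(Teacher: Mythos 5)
Your proof is correct and follows essentially the same route as the paper's: both arguments reduce to showing that once the clock price reaches $v(1)$ — which happens by round $\overline{R}$ by definition of the ceiling — concavity forces $v(k)-kp\le 0$ for every $k\ge 1$, so the player's only relevant action is the null demand (the paper phrases this via the non-increasing map $k\mapsto v(k)/k$ attaining its maximum $v(1)$ at $k=1$, while you phrase it via the slopes $z_j\le z_1=v(1)$, which is the same concavity fact). Your explicit treatment of the boundary case where the price equals $v(1)$ exactly is a welcome touch of care that the paper glosses over.
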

\begin{proof}
    From the player perspective, the auction terminates:
    \begin{enumerate}
        \item When the termination condition is satisfied;
        \item When their demand hits $0$.
    \end{enumerate}
    Indeed, after the demand hits $0$, the player cannot make any strategical decision: the eligibility rule forces them to stick to a null demand. Therefore, a player views the auction's horizon as finite.
    
    Let $p_{\overline{R}} = \betanit + \overline{R}\Delta P$ be the price at round $\overline{R}+1$.
    \begin{align*}
        \forall k \in \{0, \dots, m\}, v(k) - kp_{\overline{R}} \leq 0 &\iff \begin{cases}
            k = 0\\
            \text{or}\\
             k \neq 0 \text{ and }\frac{v(k)}{k} \leq p_{\overline{R}}
        \end{cases} 
    \end{align*}
    
    Notice that $k \mapsto \frac{v(k)}{k} = \frac{v(k) - v(0)}{k - 0}$ is non-increasing since $v$ is concave. Thus, $\max_{1 \leq k \leq m} \frac{v(k)}{k} = v(1)$. $p_{\overline{R}} = \betanit + \Delta P (\lceil \Delta P^{-1} \Big( v(1) - \betanit \Big) \rceil) \geq v(1)$. Hence, $\forall k \in \{1, \dots, m\}, \frac{v(k)}{k} \leq p_{\overline{R}}$. Therefore, the demand of the player is necessarily $0$ at round $\overline{R}$.
    \end{proof}

We define $\mathcal{P}= \{ \betanit + k \Delta P | k \in \{0, \dots, \overline{R}\} \}$, the prices that can hold relevance during the auction.

We denote by $\delta$ the opponent's demand function.

In a perfect information setting i.e.\ when the opponent's valuation is public, the optimal policy comes naturally:

\revision{
\begin{proposition}\label{perfect}
    Against an SB-player, for a perfectly informed player, bidding $\argmax_{k \in \{0, \dots, m\}} v(k) - k p_k$ where $p_k = \inf\{ p \in \mathcal{P} : \delta(p) + k \leq m\}$ at each round is optimal.
\end{proposition}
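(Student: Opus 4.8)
The plan is to identify each feasible play of the informed player with its outcome, namely the pair $(k,p)$ where $k$ is the number of items finally won and $p$ is the terminal price, and then to bound the achievable utility $v(k)-kp$ from above by $\max_{0\le k\le m}\{v(k)-kp_k\}$, before exhibiting a strategy that attains this value. I would begin by recording the mechanics: a strategy of the player is any non-increasing demand function, and since the auction stops at the first round where the total demand drops to at most $m$, an outcome consists of a terminal price $p$ together with the player's demand $k$ at that round, which is exactly the number of items received; the resulting utility is $v(k)-kp$.

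The upper bound is the crux of the argument. Suppose some strategy wins $k$ items at terminal price $p$. By the termination rule the total demand at that round is at most $m$, i.e.\ $\delta(p)+k\le m$. If $p\in\mathcal{P}$, this places $p$ in the set defining $p_k$, so $p\ge p_k$ and hence $v(k)-kp\le v(k)-kp_k$. If instead $p$ exceeds every price of $\mathcal{P}$, then $p>v(1)\ge v(k)/k$ for $k\ge1$ (the latter by concavity of $v$), so $v(k)-kp<0$ and the play is dominated by the trivial play $k=0$ of utility $0$; such plays may therefore be discarded. In either case the utility is at most $\max_{0\le k\le m}\{v(k)-kp_k\}$. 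Here I use that $\delta$ is non-increasing, so that $\{p\in\mathcal{P}:\delta(p)+k\le m\}$ is upward closed and $p_k$ is a genuine threshold.

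It then remains to show the bound is attained. Let $k^\star\in\argmax_{0\le k\le m}\{v(k)-kp_k\}$. Bidding the constant demand $k^\star$ at every round is feasible, since a constant demand satisfies the eligibility rule. Because $\delta$ is non-increasing, with this demand the total demand $\delta(p)+k^\star$ first falls to at most $m$ precisely at $p=p_{k^\star}$, so the auction terminates there and the player receives $k^\star$ items at price $p_{k^\star}$, with utility $v(k^\star)-k^\star p_{k^\star}$, the claimed maximum. One must still check $p_{k^\star}\in\mathcal{P}$: as $k=0$ yields utility $0$, the optimality of $k^\star$ forces $v(k^\star)-k^\star p_{k^\star}\ge0$, whence $p_{k^\star}\le v(k^\star)/k^\star\le v(1)$, which lies within the horizon $\mathcal{P}$. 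This establishes optimality.

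The main obstacle I anticipate is the careful bookkeeping around $\mathcal{P}$ and the infimum: ensuring that winning $k$ items indeed forces the terminal price to be at least $p_k$ (through the monotonicity of $\delta$), and ruling out, or dominating by $k=0$, those outcomes whose terminal price falls outside the relevant horizon, so that the comparison with $\max_{0\le k\le m}\{v(k)-kp_k\}$ is legitimate and the attaining strategy genuinely terminates within $\mathcal{P}$.
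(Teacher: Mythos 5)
Your proof is correct and follows essentially the same route as the paper's: identify any strategy's outcome $(k,p)$, use the termination condition together with the monotonicity of $\delta$ to get $p\geq p_k$ and hence $u(k,p)\leq u(k,p_k)\leq u(k^\star,p_{k^\star})$, and note that constantly bidding $k^\star$ attains this value. You are merely more explicit than the paper about the feasibility of the constant bid, the threshold property of $p_k$, and the edge cases where the terminal price falls outside $\mathcal{P}$.
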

\begin{proof}
    Let $s$ be a strategy for a perfectly informed player. We denote by $k$ their final bid at the terminal price $p \in \mathcal{P}$. Therefore, the player earns a utility of $u(k,p)$ and $k + \delta(p) \leq m$. 
    Let $\kappa = \argmax_{l \in \{0, \dots, m\}} v(l) - l p_l$ where $p_l = \inf\{ p \in \mathcal{P} : \delta(p) + l \leq m\}$. Then, a perfectly informed player bidding $\kappa$ at each round would end the auction at price $p_\kappa$ and $u(\kappa, p_\kappa) \geq 0$ because $v(0) - 0p_0 = 0$.
    Therefore, if $k = 0$, we have $u(k,p) = 0 \leq u(\kappa, p_\kappa)$. Otherwise, $\rho \mapsto u(k,\rho)$ is decreasing, thus $u(k,p) \leq u(k, p_k) \leq u(\kappa, p_\kappa)$. 
    Hence, for a perfectly informed player, playing any strategy $s$ always yields a final utility which is less or equal than the one obtained by bidding $\argmax_{k \in \{0, \dots, m\}} v(k) - k p_k$ at each round.
\end{proof}
}

An optimal strategy for such {an oracle player} is to bid the $k$ that maximizes $v(k) - kp_k$.
Since this result is immediate, the literature primarily examines scenarios where the opponent's valuation is either unknown or revealed through signaling during the auction ~\cite{dutting_olog_2020,eden_constant_2023}. \revision{Furthermore, this is a more sound framework as competitors usually ignore their opponents valuations.}

The following section formally introduces the optimization problem regarding the player's strategy.%
\section{POMDP model}\label{pomdp_model}
We model the opponent's valuation as a random variable $V(k) = \sum_{j = 1}^k Z_j$ with $(Z_j)$ random non-negative variables of known distribution. The $(Z_j)$ must verify $Z_1 \geq \dots \geq Z_{(n-1)m}$ almost surely (a.s.). We denote their demand function $\delta(p) = \sum_{j = 1}^{(n-1)m} \mathbf{1}\{Z_j > p\}$ which is viewed as a random process.

\begin{definition} We model the situation as a Partially Observable Markov Decision Process (POMDP).
\begin{itemize}
    \item[$\bullet$] We denote by $\mathcal{S}$ the state space.
A state $s_t = (t, p_t, k_t, \omega_t) \in \mathcal{S}$ at time $t$ is defined by:
    \begin{itemize}
        \item[-] $t \in \{0, \dots, \overline{R}\}$ is a discrete time, it can be interpreted as the round of the auction.
        \item[-] $p_t\in {\mathcal P}$ is the price at round $t$. The price dynamics is given by $p_0 = \betanit$ and
        $\forall t\in\{0, \dots, \overline{R}\}, p_{t+1} = p_t + \Delta P$.
        \item[-] $k_t\in \{0,\ldots, m\}$ is the player's bid at round $t-1$.
        \item[-] $\omega_t$ is the choice of nature for the opponent's valuation during the auction. We suppose that $\forall t\in\{0, \dots, \overline{R}\}, \omega_{t+1} = \omega_t$ meaning that the opponent's valuation does not change during the auction. This opponent's bid can be computed from $\omega_t$ through a deterministic function $\overline{\delta} : (\omega_t, p_t) \mapsto \delta_t$.
    \end{itemize}
\item[$\bullet$] We denote by $\mathcal{O}$ the set of observations.
The observation at time $t$ is given by %
$o_{t} =(t, p_t, k_t, \delta_{t-1}), \forall t \in \{1, \dots, \overline{R}\}$ where $\delta_t = \bar{\delta}(\omega_t, p_t)$.
We set $k_0 := \argmax_{k \in \{0, \dots, m\}}(v(k) - k \betanit)$ and $o_0 = (0, \betanit, k_0)$.

\item[$\bullet$] We introduce the information vector $\mathcal{I}_t$ which is the information available to the player at $t$: $\mathcal{I}_0 = o_0$ and $\mathcal{I}_t = (o_0, k_0, \dots, o_{t}, k_t)$ for $t \in \{1, \dots, \overline{R}\}$.

\item[$\bullet$] From this information, at round $t$, the player takes an action $u_t = \sigma_t(\mathcal{I}_t)\in  \{0,\ldots, m\}$ with $\sigma_t$ a measurable function and $u_t\leq k_t$. The sequence $(\sigma_t)_{t \geq 0}$ is called an admissible strategy. The state following the action $u_t$ satisfies $ k_{t+1} = u_t$. We thus have $\mathcal{I}_t = (o_0, u_0 \dots, o_{t-1}, u_{t-1}, o_t)$.

\item[$\bullet$] The action $u$ causes the state to change from $s$ to $s'$ with probability $T(s' | s, u)$. As a matter of fact, in this model, all transitions are deterministic:
\[
    T(%
(t', p', k', \omega') \mid%
(t, p, k, \omega), u) = \begin{cases}
        1 &\text{if } \omega' = \omega, k' = u, t'=t+1,  \\
        &\quad p'=p+\Delta P\\
        0 &\text{otherwise}
    \end{cases}
\]
\end{itemize}
\end{definition}

\begin{remark}
    Although the transitions are deterministic, the trajectory $(s_t)_{0 \leq t \leq \overline{R}}$ is not. Indeed, the initial state $s_0$ is random because $\omega_0$, the unknown nature's choice, is viewed as a random variable.
\end{remark}

\begin{remark}
    In the framework we consider, the aggregated demand is made public at each round. Therefore, the aggregated demand $\delta_{t-1}$ is part of the observation at time $t$.
    Other auctions may consider other information as public. 
    For instance, the auctioneer may choose to reveal all individual demands (meaning $\{ \delta_i(\betanit + s\Delta P) | 0 \leq s < t, \; i=1,\ldots, n\}$ is available at round $t$) or only if the auction is continued.
    Nonetheless, we could apply the same approach to auction with different kind of private information by modifying the observation set.
\end{remark}

\noindent \textbf{Optimization problem} The problem is to maximize the player's expected value. Our goal is thus to find an optimal admissible strategy %
i.e.\ optimize with regards to $(\sigma_t)$: %
\begin{equation}\label{objective}
\begin{split}
    \text{maximize }& \mathbb{E}[v(u_\tau) - u_\tau p_\tau] \\
    \text{where }&\tau = \inf\{ t \in \{0,\dots,\overline{R}\} \text{ } | \text{ } \delta_t + u_t \leq m \} \textit{ (termination condition)}
\end{split}
\end{equation}
Although the problem presents a stopping time, the horizon is finite, bounded by $\overline{R}$.

In order to solve such a POMDP, as explained in ~\cite[pp.~185--225]{bertsekas},
we can move from an imperfect state information problem to a perfect information problem by defining a new system where the state is the information vector $\mathcal{I}_t = (o_0,u_0 \dots, o_{t-1}, u_{t-1}, o_t)$, thus reducing the problem to an MDP since $\mathcal{I}_{t+1}$ only depends on $\mathcal{I}_{t}$ and the control $u_t$. We can thus compute the value of this problem, which satisfies a Bellman equation:

\begin{proposition}\label{bellman_solution}
    The optimal value of Problem (\ref{objective}) is given by the Bellman equation:
\begin{equation}\label{bellman_value}
        \psi(\mathcal{I}_t) = \begin{cases}
        v(k_t) - k_t p_{t-1} &\text{if } k_t + \delta_{t-1} \leq m\\
        \max_{u_t \leq k_t} \mathbb{E}_{\omega_t}[\psi(\gamma(\mathcal{I}_t, \overline{\delta}(\omega_t, p_t), u_t) | \mathcal{I}_t] &\text{otherwise}
    \end{cases}
    \end{equation}
    where $\mathcal{I}_t = (o_0, u_0 \dots, o_s = (s, p_s, k_s, \delta_{s-1}), \dots, u_{t-1}, o_t)$ is the information vector at time $t$ and $\gamma(\mathcal{I}_t, \overline{\delta}(\omega_t, p_t), u_t) = (o_0, u_0, \dots, o_{t},u_t,o_{t+1})$ is the updated information vector with $o_{t+1} = (t+1, p_{t+1}, u_t, \overline{\delta}(\omega_t, p_t)))$.
\end{proposition}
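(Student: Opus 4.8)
The plan is to read Problem~(\ref{objective}) as a finite-horizon Markov decision process whose state is the information vector $\mathcal{I}_t$, and then to obtain~(\ref{bellman_value}) by the standard principle-of-optimality argument run by backward induction on the round index. The first thing I would establish, invoking the reduction of~\cite[pp.~185--225]{bertsekas}, is that $\mathcal{I}_t$ is a sufficient statistic for the control problem: since the update rule $\mathcal{I}_{t+1} = \gamma(\mathcal{I}_t, \overline{\delta}(\omega_t, p_t), u_t)$ expresses the next information vector in terms of $\mathcal{I}_t$, the chosen action $u_t$, and the single fresh observation $\delta_t = \overline{\delta}(\omega_t, p_t)$, the process $(\mathcal{I}_t)$ is a controlled Markov chain whose only stochasticity, conditionally on $\mathcal{I}_t$, is carried by the nature variable $\omega_t$. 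With this in hand I would define $\psi(\mathcal{I}_t)$ as the supremum over admissible continuation strategies of $\mathbb{E}[v(u_\tau) - u_\tau p_\tau \mid \mathcal{I}_t]$, so that $\psi(\mathcal{I}_0)$ is exactly the optimal value sought in~(\ref{objective}).

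Next I would isolate the two regimes of~(\ref{bellman_value}). Recalling that $k_t = u_{t-1}$, the test $k_t + \delta_{t-1} \leq m$ is nothing but the termination condition $\delta_{t-1} + u_{t-1} \leq m$ evaluated at the previous round; when it holds, the stopping time has already been reached, the payoff $v(k_t) - k_t p_{t-1}$ is locked in, and no further decision remains, which gives the first branch verbatim. When the test fails the auction is still live, and the eligibility rule restricts the admissible actions to $u_t \leq k_t$, which explains the constraint under the maximum in the second branch.

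The heart of the proof is the second branch, which I would obtain by backward induction. The base case is the horizon $t = \overline{R}$: the preceding proposition forces the player's demand to $0$ at that price, so the reward-to-go is $0$ and $\psi$ is pinned down directly. For the inductive step, assuming~(\ref{bellman_value}) holds at time $t+1$, the principle of optimality decomposes an optimal continuation from the live state $\mathcal{I}_t$ into an immediate feasible action $u_t \leq k_t$ followed by an optimal continuation from the next state $\gamma(\mathcal{I}_t, \overline{\delta}(\omega_t, p_t), u_t)$. Since the only randomness in that next state is $\omega_t$, averaging the (inductively optimal) continuation value $\psi$ with the conditional law $\mathbb{E}_{\omega_t}[\cdot \mid \mathcal{I}_t]$ and maximizing over $u_t$ reproduces exactly the second branch.

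The step I expect to demand the most care is the justification that $\mathcal{I}_t$ really is a sufficient statistic and that the one-step transition of the reduced chain is governed by the posterior law of $\omega_t$ given $\mathcal{I}_t$; equivalently, the legitimacy of interchanging the supremum over future strategies with the conditional expectation in the principle of optimality. Because the horizon $\overline{R}$ is finite, the action set $\{0, \dots, k_t\}$ at each round is finite, and the transitions are deterministic once $\omega_t$ is fixed, the attendant measurable-selection and interchange issues are benign; nevertheless this is the point that must be argued rather than asserted, and it is where the cited reduction of~\cite{bertsekas} does the real work.
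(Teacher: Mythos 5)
Your proposal is correct and follows essentially the same route as the paper, which justifies the proposition by the standard reduction of a POMDP to a perfect-information MDP on the information vector $\mathcal{I}_t$ as in \cite[pp.~185--225]{bertsekas}, followed by the finite-horizon dynamic programming principle. You merely spell out the steps (sufficient statistic, identification of the terminated regime via $k_t = u_{t-1}$, backward induction from $t = \overline{R}$) that the paper leaves to the cited reference, and your remarks on why the measurable-selection and interchange issues are benign here are accurate.
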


\revision{This proposition follows from the results developed in ~\cite{bertsekas,ref_pomdp_fondateur}. Especially, it gives all the necessary steps to model a POMDP with an information vector that takes into account all past rounds. The problem thus becomes a problem with perfect information whose explicit form is given in \Cref{bellman_solution}. To be precise, it is an application of the results in 'Problem with imperfect state information' ~\cite[p. 185–225]{bertsekas}.}

\section{Bellman strategy}\label{Bellman}

\revision{The result of \Cref{bellman_solution} holds regardless of any assumptions made about the opponent. However, its formulation requires maintaining a complete record of the history of the game, which is computationally expensive.  This issue is addressed in the following theorem,  by identifying a condition leading to a more tractable solution.}
\begin{theorem}\label{main_theorem}
    Let $\varphi^v$ be defined on $\mathcal{O}$ by $$\varphi^v(o_0) = \max_{u \leq k_0} \sum_{\delta' = 0}^{(n-1)m} \mathbb{P}(\delta(\betanit) = \delta') \varphi^v(1, \betanit + \Delta P, u, \delta')$$
    
    $\forall t \in \{1, \dots, \overline{R}\}, o_t = (t, p_t, k_t, \delta_{t-1}),$
\begin{equation}\label{simple_bellman}
\!    \varphi^v(o_t) \!=\! \begin{cases}
        v(k_t) - k_tp_{t-1} \quad \hfill\text{if $k_t + \delta_{t-1} \leq m$}\\
        \displaystyle\max_{u_t \leq k_t} \sum_{\delta' \leq \delta_{t-1}}\mathbb{P}(\delta(p_{t}) = \delta' | \delta(p_{t-1}) = \delta_{t-1})\varphi^v(t+1, p_{t+1}, u_t, \delta') \\[-1em]
        \quad\hfill\text{otherwise.}
    \end{cases}
    \end{equation}

    and $\varphi^v(o_{\overline{R}} = (\overline{R}, .)) = 0$.
    
Suppose that $(\delta(p_t) = \overline{\delta}(\omega_t, p_t))_{t\geq 0}$ is a Markov chain. The optimal value given by \Cref{bellman_value} and $\varphi^v$ 
coincide %
:
 $$\forall t \in \{0, \dots, \overline{R}\}, \psi(\mathcal{I}_t) = \varphi^v(o_t)$$
\end{theorem}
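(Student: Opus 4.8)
The plan is to prove the identity $\psi(\mathcal{I}_t)=\varphi^v(o_t)$ by backward induction on the round $t$, running from the horizon $t=\overline{R}$ down to $t=0$. The induction hypothesis I would carry is the \emph{stronger} statement that the optimal value $\psi(\mathcal{I}_t)$ depends on the history $\mathcal{I}_t$ only through its last observation $o_t$, and that this common value is exactly $\varphi^v(o_t)$; showing that $\psi$ collapses to a function of $o_t$ alone is the whole point, so it must be built into the inductive claim rather than deduced afterwards. For the base case I would use the earlier result that at price $p_{\overline{R}}\ge v(1)$ the player's demand must vanish, so that the continuation value from the horizon is $v(0)=0$; this matches the boundary convention $\varphi^v(o_{\overline{R}})=0$ built into the statement.

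For the inductive step at a round $t<\overline{R}$, I would split on the termination condition exactly as in \eqref{bellman_value}. When $k_t+\delta_{t-1}\le m$, both sides equal $v(k_t)-k_tp_{t-1}$, which already depends only on $o_t$, so nothing is to be done. When the auction continues, I would start from the recursion $\psi(\mathcal{I}_t)=\max_{u_t\le k_t}\mathbb{E}_{\omega_t}[\psi(\gamma(\mathcal{I}_t,\overline{\delta}(\omega_t,p_t),u_t))\mid\mathcal{I}_t]$ and invoke the induction hypothesis to replace the inner $\psi$ by $\varphi^v(t+1,p_{t+1},u_t,\delta(p_t))$, using that the updated vector $\gamma(\mathcal{I}_t,\overline{\delta}(\omega_t,p_t),u_t)$ ends in the observation $o_{t+1}=(t+1,p_{t+1},u_t,\overline{\delta}(\omega_t,p_t))$. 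Since this quantity depends on $\omega_t$ only through $\delta(p_t)=\overline{\delta}(\omega_t,p_t)$, I would then disintegrate the conditional expectation into a sum over the possible demand values $\delta'$, weighted by $\mathbb{P}(\delta(p_t)=\delta'\mid\mathcal{I}_t)$.

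The crux is then to identify this conditional law with the one-step kernel $\mathbb{P}(\delta(p_t)=\delta'\mid\delta(p_{t-1})=\delta_{t-1})$ of \eqref{simple_bellman}. I would argue this in two moves: first, because every control $u_s=\sigma_s(\mathcal{I}_s)$ recorded in $\mathcal{I}_t$ is a deterministic measurable function of earlier observations, the information vector carries, as far as the opponent is concerned, exactly the same information as the observed demand trajectory $(\delta(p_0),\dots,\delta(p_{t-1}))$, so that $\mathbb{P}(\delta(p_t)=\delta'\mid\mathcal{I}_t)=\mathbb{P}(\delta(p_t)=\delta'\mid\delta(p_0),\dots,\delta(p_{t-1}))$; second, the Markov hypothesis on $(\delta(p_t))_{t\ge 0}$ collapses this to conditioning on the single term $\delta_{t-1}$. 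Finally, I would restrict the summation to $\delta'\le\delta_{t-1}$ using the eligibility rule, which makes $p\mapsto\delta(p)$ non-increasing so that $\delta(p_t)\le\delta_{t-1}$ almost surely. This reproduces the second branch of \eqref{simple_bellman}, closing the induction; the initial round $t=0$ is the same computation with no prior demand, the kernel being replaced by the unconditional law $\mathbb{P}(\delta(\betanit)=\delta')$, matching the displayed definition of $\varphi^v(o_0)$.

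I expect the main obstacle to be precisely this information-theoretic reduction of the conditional law: one must justify carefully that the player's own past actions, although stored in $\mathcal{I}_t$, reveal nothing additional about nature's choice $\omega_t$ beyond what the observed demands already encode, so that conditioning on the whole history genuinely reduces to the single-step Markov transition. Making this rigorous requires care with the $\sigma$-algebra generated by $\mathcal{I}_t$ and with the feedback structure $u_s=\sigma_s(\mathcal{I}_s)$ of the controls, and it is here that the Markov assumption is indispensable.
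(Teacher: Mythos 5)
Your proposal is correct and follows essentially the same route as the paper: the decisive step in both is the identification of $\mathbb{P}(\delta(p_t)=\delta'\mid\mathcal{I}_t,u_t)$ with the one-step kernel $\mathbb{P}(\delta(p_t)=\delta'\mid\delta(p_{t-1})=\delta_{t-1})$, obtained by observing that the recorded controls are deterministic functions of past observations and then applying the Markov hypothesis, together with the fact that the reward depends only on $o_t$. The only difference is presentational: the paper packages these facts as ``$(o_t)$ is a sufficient statistic'' and delegates the conclusion to the cited dynamic-programming theory, whereas you unfold that conclusion into an explicit backward induction, which is the standard way to derive it.
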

In other words, we can find an optimal solution which only depends on the distribution of the opponent's demand, avoiding the recourse to dynamic programming in a belief space. As a matter of fact, the opponent's demand is a sufficient statistics ~\cite{bertsekas} for the optimal value. Indeed, if the demand function is a Markov chain, then, the information vector is also a Markov chain.

\begin{proof}
    Let $\mathcal{I}_t = (o_0 = (0, \betanit, k_0),u_0,  \dots, o_{t-1}, u_{t-1}, o_{t} = (t, p_t, k_t, \delta_{t-1}))$ and $i_{t+1} = (o'_0, u'_0, \dots, o'_{t-1},u'_{t-1},o'_t, u, o)$. Let $u_t$ be an admissible control at time $t$. First, notice that all the information vector $\mathcal{I}_s$ is contained within the vector $\mathcal{I}_{s+1}$ for all round $s$. Therefore, $\mathbb{P}(\mathcal{I}_{t+1} = i_{t+1} | \mathcal{I}_0, \dots, \mathcal{I}_t, u_t) = \mathbb{P}(\mathcal{I}_{t+1} = i_{t+1} | \mathcal{I}_t, u_t)$. The dynamics of the sequence of information vectors only depends on the last information and on the control, $(\mathcal{I}_t)_{0 \leq t \leq \overline{R}}$ is indeed a Markov chain.

    Moreover $\mathbb{P}(o_{t+1} = o | \mathcal{I}_0, \dots, \mathcal{I}_t, u_t) = \mathbb{P}(o_{t+1} = o | \mathcal{I}_t, u_t)$. Thus,
    \begin{align*}
        \mathbb{P}(o_{t+1} = o | \mathcal{I}_t, u_t) &= \mathbb{P}(o_{t+1} = (t+1, p_{t+1}, u, \delta') | o_0, u_0, \dots, o_t, u_t)\\
        &= \mathbb{P}(\overline{\delta}(\omega_0, p_{t}) = \delta' | \delta(\betanit) = \delta_0, \dots, \delta(p_{t-1})=\delta_{t-1})\mathbf{1}(u_t = u)\\
        &= \mathbb{P}(\delta(p_{t}) = \delta' | \delta(\betanit) = \delta_0, \dots, \delta(p_{t-1})=\delta_{t-1})\mathbf{1}(u_t = u)\\
        &= \mathbb{P}(\delta(p_{t}) = \delta' |\delta(p_{t-1}) = \delta_{t-1})\mathbf{1}(u_t = u)
    \end{align*}

    Lastly, the reward of the problem at time $t$ solely depends on $o_t$. Indeed, in the auction, the player receives $v(k_t) - k_tp_{t-1}$ when $k_t + \delta(p_{t-1}) \leq m$ and $0$ otherwise. This shows that $(o_t)_{0 \leq t \leq \overline{R}}$ is a sufficient statistics.
\end{proof}

This leads to a practical algorithm to decide bids at each round: the strategy is to take the $\argmax$ at each round $t$ in \Cref{simple_bellman}. 

\revision{It should be noted that we did not make direct usage of the SB behavior of the opponent. Indeed, the result is true for any demand that is a Markov chain and depends deterministically on $\omega$, the choice of nature and the price. In other words, the results holds for any demand of the opponent that solely depends on their valuation under the assumption that it is a Markov chain. Thereafter, we will focus specifically on the SB strategy which will be assumed to be a Markov chain, and the Bellman strategy the algorithm provides. The following definition recalls the formula for the SB strategy and defines the Bellman strategy.}

\begin{definition}\label{bellman_strat}
    Let $v$ be a player's valuation.
    Let $\nu$ be a \textbf{random} super-player's valuation. We consider the price $p_{t} = \betanit + t\Delta P$.

    We denote by
    \[ \sigma^\nu(t)= \min\big( \argmax_{u \leq (n-1)m}\{\nu(u) - u p_{t} \} \big)
    \]
    the bid of a super SB-player of valuation $\nu$.

    Under the assumptions of \Cref{main_theorem}, we define for  $k + \delta >m$ and $t\leq\overline{R}$ \[ \beta^v(t,k,\delta) = \min\big(\argmax_{u \leq k} Q^v(t, \delta, u)\big)\]
    where
   $Q^v(t, \delta, u) = \sum_{\delta'} \mathbb{P}^t_{\delta'| \delta} \varphi^v(t+1, p_{t+1},u, \delta')$ and $\mathbb{P}^t_{\delta'| \delta} = \mathbb{P}(\sigma^\nu(t+1) = \delta' | \sigma^\nu(t) = \delta)$.
    
    We call Bellman strategy the strategy that consists in bidding $\beta^v(t, k, \delta)$ at round $t$, when the last demand of the player is $k$ and their opponent's last demand is $\delta$. Note that if $k + \delta \leq m$, the auction has terminated so there is no need for a strategy. Similarly, if $t>\overline{R}$, $\beta(t,k,\delta) = 0$.
\end{definition}

\revision{\begin{remark}
    Note that if $(p_t)$ follows the dynamic of the auction, $\forall t \geq 0,\sigma^\nu(t) = \overline{\delta}(\omega_t, p_t)$ where $(w_t)$ is the choice of nature giving a super-player's valuation $\nu$.
\end{remark}}

\begin{remark}
    This definition can be adapted to model a super Bellman-player by modifying the state space, observation space and control space of the POMDP.
\end{remark}

\revision{We have thus shown that if the super-player's demand is a Markov Chain, then we can easily compute the optimal strategy of the player. In the rest of the paper, we will use the more practical form of the optimal solution. Therefore, we introduce the following assumption and show that it holds for reasonable choices of distribution for valuation $\nu$.}
\begin{assumption}[Markov property (MP)]\label{MP}
    $(\sigma^\nu(t))_{0 \leq t \leq \overline{R}}$ is a Markov chain.
\end{assumption}

Let us show some properties of this strategy.

\begin{lemma}\label{lemma1}
A player following the Bellman strategy will always place bids lower than those they would have placed under the SB strategy. Specifically, for a given valuation $v$, we have
    $$\forall t \in \{0, \dots, \overline{R}\}, \forall k \in \{0, \dots, m\}, \forall \delta \in \{0, \dots, (n-1)m\}, {\beta}^v(t, k, \delta) \leq \sigma^v(t).$$
\end{lemma}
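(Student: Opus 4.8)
The plan is to reduce the statement to a single monotonicity property of the value function $\varphi^v$ and to establish that property by backward induction on the round index. Write $k^*_s := \sigma^v(s)$ for the player's SB demand at price $p_s$; since prices increase and SB demands are non-increasing in the price (the eligibility rule), the sequence $(k^*_s)_s$ is non-increasing, and by concavity of $v$ the map $u \mapsto v(u) - u p_s$ is concave with maximum at $k^*_s$, hence non-increasing for $u \ge k^*_s$. Recalling from \Cref{bellman_strat} that $\beta^v(t,k,\delta) = \min\big(\argmax_{u \le k} Q^v(t,\delta,u)\big)$ with $Q^v(t,\delta,u) = \sum_{\delta'} \mathbb{P}^t_{\delta'\mid\delta}\,\varphi^v(t+1,p_{t+1},u,\delta')$, the case $k \le k^*_t$ is immediate since then every admissible action satisfies $u \le k \le \sigma^v(t)$. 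Otherwise it suffices to show that $Q^v(t,\delta,\cdot)$ attains its maximum on $\{0,\dots,k\}$ at some point $\le k^*_t$; this follows if, for every $\delta'$ and every $u \ge k^*_t$, one has the pointwise domination $\varphi^v(t+1,p_{t+1},u,\delta') \le \varphi^v(t+1,p_{t+1},k^*_t,\delta')$ (the relevant SB level in the bid slot of a round-$(t+1)$ state is $k^*_t$, since that bid is placed at price $p_t$). Summing against the weights $\mathbb{P}^t_{\delta'\mid\delta}$ gives $Q^v(t,\delta,u) \le Q^v(t,\delta,k^*_t)$ for $u \ge k^*_t$, so a maximizer lies in $\{0,\dots,k^*_t\}$ and the $\min$ of the $\argmax$ is $\le k^*_t = \sigma^v(t)$.

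The core is therefore the claim $(H_s)$: for every $\delta$ and every $u \ge k^*_{s-1}$, $\varphi^v(s,p_s,u,\delta) \le \varphi^v(s,p_s,k^*_{s-1},\delta)$. I would prove it by backward induction on $s$, with base case $s = \overline{R}$ where $\varphi^v(\overline{R},\cdot) = 0$. For the step, fix $u \ge k^*_{s-1}$ and split on the termination test in \Cref{simple_bellman}. If $u + \delta \le m$, then also $k^*_{s-1} + \delta \le m$ (as $k^*_{s-1} \le u$), both branches terminate, and $\varphi^v(s,p_s,u,\delta) = v(u) - u p_{s-1} \le v(k^*_{s-1}) - k^*_{s-1}p_{s-1} = \varphi^v(s,p_s,k^*_{s-1},\delta)$ by concavity. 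If $u + \delta > m$, the induction hypothesis $(H_{s+1})$ lets me cap every continuation bid at $k^*_s$ without loss (future bids obey $u' \le u$ and $k^*_s \le k^*_{s-1} \le u$), giving $\varphi^v(s,p_s,u,\delta) = \max_{u' \le k^*_s}\sum_{\delta' \le \delta}\mathbb{P}(\delta(p_s)=\delta'\mid\delta(p_{s-1})=\delta)\,\varphi^v(s+1,p_{s+1},u',\delta')$. When $k^*_{s-1}+\delta > m$ this is $\le \varphi^v(s,p_s,k^*_{s-1},\delta) = \max_{u'\le k^*_{s-1}}(\cdots)$ because $k^*_s \le k^*_{s-1}$ enlarges the admissible set; the remaining subcase $k^*_{s-1}+\delta \le m$ uses the auxiliary bound below.

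That auxiliary ingredient, proved by the same backward induction, is the claim $(B_s)$: whenever the auction continues at a state $(s,p_s,u,\delta)$ one has $\varphi^v(s,p_s,u,\delta) \le \max_{u'' \le u}(v(u'') - u'' p_s)$. Indeed every terminal reward reachable from a continuing state is collected at a price $\ge p_s$ with a bid $\le u$, and $v(u'')-u''p'$ is decreasing in $p'$; an expectation of such rewards is bounded accordingly. Combined with $u \ge k^*_s$ this yields $\varphi^v(s,p_s,u,\delta) \le v(k^*_s) - k^*_s p_s$, and then $v(k^*_s)-k^*_s p_s \le v(k^*_s) - k^*_s p_{s-1} \le v(k^*_{s-1}) - k^*_{s-1}p_{s-1} = \varphi^v(s,p_s,k^*_{s-1},\delta)$, using $p_s \ge p_{s-1}$ and the definition of $k^*_{s-1}$ as the SB maximizer at $p_{s-1}$. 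This closes the last subcase of $(H_s)$.

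I expect the main obstacle to be exactly this last subcase: because the eligibility rule only bounds future bids by the current one, raising $u$ \emph{enlarges} the set of admissible continuations, so a higher current bid is not monotonically worse by mere inspection of the recursion. The resolution is the bound $(B_s)$, which expresses that continuing always means paying at least the strictly larger next price $p_s$, so that the best one can hope to achieve by continuing is dominated by terminating immediately at the SB-optimal bid. Concavity of $v$ together with the monotonicity of the prices are precisely what make that domination hold, and they are the only two structural facts the argument relies on.
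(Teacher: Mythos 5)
Your argument is correct, but it takes a genuinely different route from the paper's. The paper proves \Cref{lemma1} by a trajectory comparison: it introduces the capped strategy $\tilde{\beta}^v = \min(\beta^v, \sigma^v)$, compares the stopping times $T^{\tilde\beta} \le T^{\beta}$ and the realized utilities pathwise, shows $\tilde U \ge U$ with strict inequality whenever the stopping times differ, and invokes optimality of $\beta^v$ to force $\mathbb{P}(T^{\tilde\beta} \neq T^{\beta}) = 0$ and hence $\tilde\beta^v = \beta^v$. You instead prove the bound directly from the recursion, by establishing the monotonicity property $(H_s)$ of $\varphi^v$ in the carried-over bid above the SB level, closed by the price-domination bound $(B_s)$ --- which is essentially a sharpened form of \Cref{upper_bound} (evaluated at the current price $p_s$ rather than $p_{s-1}$ for continuing states). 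I checked the three cases of your induction, including the delicate one where the $u$-state continues while the $k^*_{s-1}$-state terminates, and the chain $v(k^*_s)-k^*_s p_s \le v(k^*_s)-k^*_s p_{s-1} \le v(k^*_{s-1})-k^*_{s-1}p_{s-1}$ does close it. What your approach buys: it yields the inequality $\beta^v(t,k,\delta) \le \sigma^v(t)$ pointwise at every state $(t,k,\delta)$ without passing through almost-sure equality of utilities --- the paper's final step, from $\tilde U = U$ a.s.\ to $\tilde\beta^v = \beta^v$ everywhere, is the least transparent part of its proof and your route avoids it entirely. What the paper's approach buys is brevity and reuse: it leans on the already-established optimality of the Bellman strategy rather than re-deriving structural properties of $\varphi^v$, and the intermediate facts you need ($(B_s)$, monotonicity of $\sigma^v$) are close cousins of \Cref{upper_bound} and \Cref{undersigma}, so some of your work duplicates material the paper proves separately.
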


In order to prove this result, we show that the strategy that consists in bidding the minimum between the Bellman bid and the SB bid at time $t$ gives the same expected gain as the Bellman strategy. A more detailed proof can be found in \Cref{Appendix}.

We also have a natural upper bound for the expected utility of a Bellman-player at information set $\mathcal{I}_t$.

\begin{proposition}\label{upper_bound}
    Let $o_t$ be an observation at time $t$ such that $k$ is the player's last demand. Then, the value of the POMDP can be bounded as follows: 
    $$\varphi^v(o_t) \leq v(\sigma^v(t-1)\wedge k) - (\sigma^v(t-1)\wedge k)p_{t-1}$$
\end{proposition}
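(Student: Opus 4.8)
The plan is to establish the bound by backward induction on $t$, working directly with the dynamic programming recursion~\eqref{simple_bellman} for $\varphi^v$. Throughout it is convenient to abbreviate $f_t(u) := v(u) - u p_t$, the player's myopic utility for holding $u$ items at price $p_t$. Two structural facts will be used repeatedly. First, since $v$ is concave, each $f_t$ is concave in $u$, and because $\sigma^v(t)$ is its smallest maximiser, $f_t$ is non-decreasing on $\{0,\dots,\sigma^v(t)\}$. Second, since the player's own SB demand obeys eligibility (Corollary~\ref{model_val} gives $\sigma^v(t)=\sum_{j}\mathbf 1(z_j>p_t)$), the sequence $t\mapsto \sigma^v(t)$ is non-increasing. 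In these terms the claimed bound reads $\varphi^v(o_t)\le f_{t-1}\big(\sigma^v(t-1)\wedge k\big)$.

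The two terminal situations are settled without the inductive hypothesis. At $t=\overline{R}$ we have $\varphi^v(o_{\overline{R}})=0$, and the bound holds because $f_{\overline{R}-1}(0)=v(0)=0$ (normalisation) and $f_{\overline{R}-1}$ is non-decreasing on $\{0,\dots,\sigma^v(\overline{R}-1)\}$, while $\sigma^v(\overline{R}-1)\wedge k$ lies in that range. On the termination branch $k+\delta_{t-1}\le m$ we have $\varphi^v(o_t)=f_{t-1}(k)$; since $\sigma^v(t-1)$ maximises $f_{t-1}$, this is at most $f_{t-1}(\sigma^v(t-1)\wedge k)$, with equality when $k\le\sigma^v(t-1)$ and otherwise bounded by the maximum value $f_{t-1}(\sigma^v(t-1))$.

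For the inductive step on the ``otherwise'' branch I would apply the recursion, insert the inductive hypothesis $\varphi^v(t+1,p_{t+1},u_t,\delta')\le f_t(\sigma^v(t)\wedge u_t)$, and exploit the opponent's eligibility: because the opponent's demand is non-increasing, $\delta(p_t)\le\delta(p_{t-1})$ almost surely, so $\sum_{\delta'\le\delta_{t-1}}\mathbb P(\delta(p_t)=\delta'\mid \delta(p_{t-1})=\delta_{t-1})=1$. The resulting summand no longer depends on $\delta'$, leaving $\varphi^v(o_t)\le \max_{u_t\le k} f_t(\sigma^v(t)\wedge u_t)$. The map $u_t\mapsto f_t(\sigma^v(t)\wedge u_t)$ equals the non-decreasing function $f_t(u_t)$ for $u_t\le\sigma^v(t)$ and is constant (equal to $\max f_t$) beyond, hence is non-decreasing overall; its maximum over $u_t\le k$ is attained at $u_t=k$, giving $\varphi^v(o_t)\le f_t(\sigma^v(t)\wedge k)$.

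It then remains to prove the single inequality $f_t(\sigma^v(t)\wedge k)\le f_{t-1}(\sigma^v(t-1)\wedge k)$, which I expect to be the only delicate point. Writing $a=\sigma^v(t)\wedge k$ and $b=\sigma^v(t-1)\wedge k$, monotonicity of $\sigma^v$ gives $a\le b\le\sigma^v(t-1)$, and then $f_t(a)=v(a)-a p_t\le v(a)-a p_{t-1}=f_{t-1}(a)\le f_{t-1}(b)$, using $p_t\ge p_{t-1}$ and $a\ge 0$ for the first step and the monotonicity of $f_{t-1}$ below its maximiser (together with $a\le b\le\sigma^v(t-1)$) for the second. The temptation to compare the maximisers $\sigma^v(t)$ and $\sigma^v(t-1)$ head-on produces a cumbersome case split; the two-step bound---first lowering the price, then moving along $f_{t-1}$---avoids it and closes the induction.
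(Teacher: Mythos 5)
Your proposal is correct and follows essentially the same route as the paper's proof: backward induction on $t$, collapsing the expectation because the summand no longer depends on $\delta'$, observing that $u\mapsto v(\sigma^v(t)\wedge u)-(\sigma^v(t)\wedge u)p_t$ is maximised at $u=k$, and then closing with the same two-step comparison (first relax the price from $p_t$ to $p_{t-1}$, then use monotonicity of $u\mapsto v(u)-up_{t-1}$ below $\sigma^v(t-1)$ together with $\sigma^v(t)\wedge k\le\sigma^v(t-1)\wedge k$). The only difference is presentational: you make explicit the monotonicity facts about $f_t$ and $\sigma^v$ that the paper uses implicitly.
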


\begin{proof}
We prove the proposition by induction.
    \begin{itemize}
        \item[$\bullet$] Let $(k,\delta) \in \{0, \dots, m\}\times\{0, \dots, (n-1)m\}$.
        
        For $t = \overline{R}$, $\varphi^v(\overline{R}, p_{\overline{R}}, k,\delta) = 0 \leq v(\sigma^v(\overline{R}-1)\wedge k) - (\sigma^v(\overline{R}-1)\wedge k)p_{\overline{R}-1}$.
       
        \item[$\bullet$] Let $(t, k, \delta) \in \{0, \dots, \overline{R}-1\} \times \{0, \dots, m\}\times\{0, \dots, (n-1)m\}$. 
        
        Suppose $\forall (u, \delta') \in \{0, \dots, m\}\times\{0, \dots, (n-1)m\}, \varphi^v(t+1,p_{t+1}, u, \delta') \leq v(\sigma^v(t)\wedge u) - (\sigma^v(t)\wedge u) p_t$.
        If $k + \delta \leq m$ then $\varphi^v(t,p_t, k,\delta) = v(k) - kp_{t-1} \leq v(\sigma^v(t-1)\wedge k) - (\sigma^v(t-1)\wedge k)p_{t-1}$.

        Otherwise,
        \begin{align*}
            \varphi^v(t,p_t, k,\delta) &= \max_{u \leq k} \sum_{\delta'} \mathbb{P}^t_{\delta'|\delta}\varphi^v(t+1, p_{t+1}, u, \delta')\\
            &\leq \max_{u \leq k} \sum_{\delta'} \mathbb{P}^t_{\delta'|\delta}(v(\sigma^v(t)\wedge u) - (\sigma^v(t)\wedge u) p_t)\\
            &= v(\sigma^v(t)\wedge k) - (\sigma^v(t)\wedge k)p_t
        \end{align*}
        Lastly, $v(\sigma^v(t)\wedge k) - (\sigma^v(t)\wedge k)p_t \leq v(\sigma^v(t)\wedge k) - (\sigma^v(t)\wedge k)p_{t-1} \leq v(\sigma^v(t-1)\wedge k) - (\sigma^v(t-1)\wedge k)p_{t-1}$, since $\sigma^v(t)\wedge k \leq \sigma^v(t-1)\wedge k \leq \sigma^v(t-1)$ and $u\mapsto v(u) - up_{t-1}$ is non-decreasing on $\{0, \dots, \sigma^v(t-1)\}$.
    \end{itemize}
\end{proof}

\begin{proposition}\label{undersigma}
    Let $g^v$ defined for all $t \in \{1, \dots, \overline{R}\}, k \in \{0, \dots, \sigma(t-1)\}, \delta \in \{0, \dots, (n-1)m\}$ as $g^v(0,k_0,.) \equiv \max_{u \leq k_0} \sum_{\delta'}\mathbb{P}(\sigma^\nu(1) = \delta')g^v(1,u,\delta')$ and for $t > 0$:
    \[ g^v:(t, k,\delta) \mapsto
    \begin{cases}
        v(k) - kp_{t-1} &\text{if } k + \delta \leq m\\
        \max_{u \leq k \wedge \sigma^v(t) } \sum_{\delta'} \mathbb{P}^t_{\delta'| \delta} g^v(t+1, u, \delta') &\text{if } t< \overline{R} \text{ and } k + \delta > m\\
        0 &\text{otherwise}
    \end{cases} \]
    Then
    $\forall k \in \{0, \dots, m\},\forall \delta \in \{0, \dots, (n-1)m\}, \varphi^v(0, \betanit, k) = g^v(0, k, \delta)$ and
    $\forall (t, k, \delta) \in \{1, \dots, \overline{R}\} \times \{0, \dots, m\}\times \{0, \dots, (n-1)m\},\varphi^v(t,p_t, k,\delta) \leq g^v(t,k \wedge \sigma^v(t-1),\delta) \leq \varphi^v(t, p_t, k \wedge \sigma^v(t-1), \delta)$.
\end{proposition}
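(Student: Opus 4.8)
The plan is to prove the pointwise bound and the two-sided estimate by backward induction on $t$, from $t=\overline{R}$ down to $t=0$. Two elementary facts about the SB demand will be used repeatedly: since $p_t$ increases with $t$ and the SB demand is non-increasing in price (the eligibility rule), we have $\sigma^v(t)\leq\sigma^v(t-1)$; and $\sigma^v(t-1)$ maximizes the instantaneous utility $u\mapsto v(u)-up_{t-1}$, which by concavity of $v$ is non-decreasing on $\{0,\dots,\sigma^v(t-1)\}$. I would split the statement into its right-hand and left-hand inequalities and treat them by two separate inductions.

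For the right-hand inequality I would first prove the pointwise comparison $g^v(t,K,\delta)\leq\varphi^v(t,p_t,K,\delta)$ for every triple $(t,K,\delta)$. This is immediate by backward induction: at each step the two functions obey the same piecewise definition with the same branch test $K+\delta\leq m$, the only difference being that $g^v$ maximizes over $\{u\leq K\wedge\sigma^v(t)\}\subseteq\{u\leq K\}$; combining this inclusion with the induction hypothesis and the monotonicity of the conditional expectation $\sum_{\delta'}\mathbb{P}^t_{\delta'|\delta}(\cdot)$ closes the step. Specializing to $K=k\wedge\sigma^v(t-1)$ yields exactly $g^v(t,k\wedge\sigma^v(t-1),\delta)\leq\varphi^v(t,p_t,k\wedge\sigma^v(t-1),\delta)$.

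For the left-hand inequality $\varphi^v(t,p_t,k,\delta)\leq g^v(t,k\wedge\sigma^v(t-1),\delta)$ I would again induct backward. If $k+\delta\leq m$, both sides are explicit and the claim reduces to $v(k)-kp_{t-1}\leq v(k\wedge\sigma^v(t-1))-(k\wedge\sigma^v(t-1))p_{t-1}$, which holds because $\sigma^v(t-1)$ maximizes the instantaneous utility. If $k+\delta>m$ and $t<\overline{R}$, I would substitute the induction hypothesis into the recursion for $\varphi^v$ to obtain $\varphi^v(t,p_t,k,\delta)\leq\max_{u\leq k}\sum_{\delta'}\mathbb{P}^t_{\delta'|\delta}\,g^v(t+1,u\wedge\sigma^v(t),\delta')$. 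The key reparametrization is that, as $u$ ranges over $\{0,\dots,k\}$, the value $u\wedge\sigma^v(t)$ sweeps out $\{0,\dots,k\wedge\sigma^v(t)\}$, so this maximum equals the one over $u'\leq k\wedge\sigma^v(t)$; and since $\sigma^v(t)\leq\sigma^v(t-1)$ we have $k\wedge\sigma^v(t)=(k\wedge\sigma^v(t-1))\wedge\sigma^v(t)$, which is precisely the domain of maximization in the recursion defining $g^v(t,k\wedge\sigma^v(t-1),\delta)$. The two expressions then coincide.

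The main obstacle is the potential mismatch between the piecewise branches of the two functions: it can occur that $k+\delta>m$ while $(k\wedge\sigma^v(t-1))+\delta\leq m$, which happens exactly when $\sigma^v(t-1)<k$ and $\sigma^v(t-1)+\delta\leq m$. Then $g^v(t,k\wedge\sigma^v(t-1),\delta)=v(\sigma^v(t-1))-\sigma^v(t-1)p_{t-1}$ sits in its terminal branch, and the reparametrization above does not apply. In this case I would invoke \Cref{upper_bound}, which gives $\varphi^v(t,p_t,k,\delta)\leq v(\sigma^v(t-1)\wedge k)-(\sigma^v(t-1)\wedge k)p_{t-1}=v(\sigma^v(t-1))-\sigma^v(t-1)p_{t-1}$ since here $\sigma^v(t-1)<k$, closing the case; the horizon value $t=\overline{R}$ is subsumed in the same analysis via the terminal conventions. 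Finally, the equality at $t=0$ follows by sandwiching: using $k_0=\sigma^v(0)$, so that $u\wedge\sigma^v(0)=u$ for all $u\leq k_0$, the two inequalities at $t=1$ force $g^v(1,u,\delta')=\varphi^v(1,p_1,u,\delta')$ for the relevant arguments, whence the two initial maximizations defining $g^v(0,k_0,\delta)$ and $\varphi^v(0,\betanit,k_0)$ agree.
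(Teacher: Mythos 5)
Your proof is correct and follows the same skeleton as the paper's: backward induction, with the right-hand inequality obtained from the inclusion of the feasible sets and the left-hand inequality closed by the reparametrization $u\mapsto u\wedge\sigma^v(t)$ together with $k\wedge\sigma^v(t)=(k\wedge\sigma^v(t-1))\wedge\sigma^v(t)$. The one substantive difference is your explicit treatment of the branch-mismatch case $k+\delta>m$ with $(k\wedge\sigma^v(t-1))+\delta\leq m$, where $g^v(t,k\wedge\sigma^v(t-1),\delta)$ sits in its terminal branch and the reparametrization argument does not apply: the paper's induction step silently assumes the recursive branch and only distinguishes this situation in the base case $t=\overline{R}$, whereas you close it by invoking \Cref{upper_bound}, which gives exactly the needed bound $\varphi^v(t,p_t,k,\delta)\leq v(\sigma^v(t-1))-\sigma^v(t-1)p_{t-1}$. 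You also supply the sandwich argument for the $t=0$ equality, which the paper asserts without proof. So your write-up is, if anything, more complete than the paper's on these two points.
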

Proposition \ref{undersigma} along with proposition \ref{lemma1} shows that it suffices to consider bids in $\{0, \dots, \sigma^v(t-1)\}$ at round $t$ for a Bellman-player. In this case, $\varphi^v(t,p_t, k,\delta) = g^v(t,k,\delta)$.
Such results help reduce the complexity of the Bellman strategy as we reduce the number of scenarios to consider to compute the optimal move to maximize one's expected utility.

\begin{proof}
    We immediately have that $\varphi^v(t, p_t, k\wedge\sigma^v(t-1), \delta) \geq g^v(t, k\wedge\sigma^v(t-1), \delta)$.
    Let us show by induction the left inequality:
    \begin{enumerate}
        \item If $t = \overline{R}$, let $(k,\delta) \in \{0, \dots, m\}\times\{0, \dots, (n-1)m\}$.
        \begin{itemize}
            \item Either $\sigma^v(\overline{R}-1)+\delta > m$ and $k + \delta > m$, in which case $(k\wedge \sigma^v(\overline{R}-1))+\delta > m$.
            Hence, $\varphi^v(\overline{R},p_{\overline{R}}, k,\delta) = 0 = g^v(\overline{R},k\wedge\sigma^v(\overline{R}-1),\delta)$.
            \item Or, we have the case $k + \delta \leq m$ and $\sigma(\overline{R}-1) + \delta > m$ or the case $\sigma^v(\overline{R}-1) + \delta \leq m$ and $k + \delta > m$ which both yield to $(k \wedge \sigma^v(\overline{R}-1)) + \delta \leq m$. Therefore, $g^v(\overline{R},k\wedge\sigma^v(\overline{R}-1),\delta) = v(\sigma(\overline{R}-1)) - \sigma^v(\overline{R}-1)p_{\overline{R}-1} \geq 0 = \varphi^v(\overline{R},p_{\overline{R}}, k,\delta)$.
        \end{itemize}
        \item Let $t \in \{0, \dots, \overline{R}-1\}$ and suppose that $\forall (u, \delta') \in \{0, \dots, m\}\times\{0, \dots, (n-1)m\}, \varphi^v(t+1,p_{t+1}, u,\delta') \leq g^v(t+1,u \wedge \sigma^v(t),\delta')$.
        \begin{itemize}
            \item If $k + \delta \leq m$, then \begin{align*}
            \varphi^v(t,p_t, k,\delta) &= v(k) - kp_{t-1}\\ &\leq v(k\wedge\sigma^v(t-1)) - (k\wedge\sigma^v(t-1))p_{t-1}\\
            &= g^v(t,k\wedge \sigma^v(t-1),\delta)
        \end{align*}
            \item If $k + \delta > m$, then
            \begin{align*}
                \varphi^v(t,p_t, k,\delta) &= \max_{u \leq k} \sum_{\delta'}\mathbb{P}^t_{\delta'|\delta}\varphi^v(t+1,p_{t+1},u,\delta') \\
                &\leq \max_{u \leq k} \sum_{\delta'}\mathbb{P}^t_{\delta'|\delta}g^v(t+1,u\wedge\sigma^v(t),\delta') \\
                &= \max_{u \leq k\wedge\sigma^v(t)} \sum_{\delta'}\mathbb{P}^t_{\delta'|\delta}g^v(t+1,u,\delta') \\
                &= \max_{u \leq (k\wedge\sigma^v(t))\wedge\sigma^v(t-1)} \sum_{\delta'}\mathbb{P}^t_{\delta'|\delta}g^v(t+1,u,\delta') \\
                &= g^v(t,k\wedge\sigma^v(t-1),\delta)
            \end{align*}
        \end{itemize}
    \end{enumerate}
\end{proof}

Lastly, we can observe that the utility decreases with $t$. Therefore, one may have the intuition that if the opponent decreases their demand, one's expected utility might increase as it is possible to stop the auction earlier. The following proposition shows this result, however, we need an assumption to prove it. 

\begin{assumption}[Stochastic monotonicity (SM)]\label{SM}
    $\forall s \in \{0, \dots, (n-1)m\}$, $\forall \delta \in \{0, \dots, (n-1)m-1\}$,$\sum_{\delta' \geq s} \mathbb{P}^t_{\delta'|\delta+1} \geq \sum_{\delta' \geq s} \mathbb{P}^t_{\delta'|\delta}$.
\end{assumption}

This assumption means that there is more chance for a SB demand to be high when the previous demand was high itself. We give in \Cref{Models} examples of probabilistic models in which this assumption is satisfied.

\begin{proposition}\label{decreasingdelta}
    Suppose SM (\Cref{SM}). Then, the value of the problem is non-increasing with $\delta$, i.e.\ 
    $\forall t \in \{1, \dots, \overline{R}\}$,$\forall k \in \{0, \dots, \sigma^v(t-1)\}$,$\forall \delta \in \{0, \dots, (n-1)m-1\}$, $\varphi^v(t,p_t, k, \delta) \geq \varphi^v(t,p_t, k, \delta+1)$.
\end{proposition}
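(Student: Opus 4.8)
The plan is to argue by backward induction on the round $t$, reading off the recursion from \Cref{simple_bellman} and noting that Assumption (SM) (\Cref{SM}) says exactly that the conditional law $\mathbb{P}^t_{\cdot\mid\delta}$ is non-decreasing in $\delta$ for the first-order stochastic dominance order. The induction hypothesis at round $t+1$ is that, for every admissible bid $u\leq\sigma^v(t)$, the map $\delta'\mapsto\varphi^v(t+1,p_{t+1},u,\delta')$ is non-increasing. The base case $t=\overline{R}$ is immediate, since $\varphi^v(\overline{R},\cdot)\equiv 0$ is constant in $\delta$.

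For the inductive step I fix $k\leq\sigma^v(t-1)$ and distinguish the three regimes of \Cref{simple_bellman} according to how $\delta$ and $\delta+1$ sit relative to the termination threshold. (i) If $k+\delta+1\leq m$, both values equal the terminal reward $v(k)-kp_{t-1}$ and the inequality holds with equality. (ii) If $k+\delta=m$, then $\varphi^v(t,p_t,k,\delta)=v(k)-kp_{t-1}$ is a terminal reward whereas $\varphi^v(t,p_t,k,\delta+1)$ is a continuation value; here I invoke \Cref{upper_bound}, which for $k\leq\sigma^v(t-1)$ gives $\varphi^v(t,p_t,k,\delta+1)\leq v(\sigma^v(t-1)\wedge k)-(\sigma^v(t-1)\wedge k)p_{t-1}=v(k)-kp_{t-1}$, so the claim follows. (iii) If $k+\delta>m$, then $k+\delta+1>m$ as well, both values are continuation values, and the comparison is handled by the stochastic dominance argument below.

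The heart of regime (iii) is the elementary fact that, whenever $h$ is non-increasing and a distribution $\nu$ dominates $\mu$ in the stochastic order, one has $\sum_{\delta'}\mu(\delta')h(\delta')\geq\sum_{\delta'}\nu(\delta')h(\delta')$; this follows from a summation by parts, weighting the increments $h(s-1)-h(s)\geq 0$ against the tail sums $\sum_{\delta'\geq s}(\nu-\mu)(\delta')\geq 0$. I apply it with $h(\delta')=\varphi^v(t+1,p_{t+1},u,\delta')$, which is non-increasing by the induction hypothesis after restricting the maximization to $u\leq\sigma^v(t)$ via \Cref{lemma1,undersigma}, and with $\mu=\mathbb{P}^t_{\cdot\mid\delta}$, $\nu=\mathbb{P}^t_{\cdot\mid\delta+1}$, where the dominance is precisely (SM). This yields, for each fixed $u\leq\sigma^v(t)$, the inequality $\sum_{\delta'}\mathbb{P}^t_{\delta'\mid\delta}\varphi^v(t+1,p_{t+1},u,\delta')\geq\sum_{\delta'}\mathbb{P}^t_{\delta'\mid\delta+1}\varphi^v(t+1,p_{t+1},u,\delta')$; since it holds term by term under the maximum, taking $\max_{u}$ preserves it and closes the induction.

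The step I expect to be the main obstacle is the boundary regime (ii): there monotonicity in $\delta$ cannot be obtained inside the recursion, because raising $\delta$ may switch the Bellman equation from its ``continue'' branch to its ``terminate'' branch, so a purely stochastic-dominance argument fails. The substantive content is that stopping immediately, with value $v(k)-kp_{t-1}$, is at least as good as any continuation, which is exactly the bound provided by \Cref{upper_bound}; this is also why the hypothesis $k\leq\sigma^v(t-1)$, ensuring $\sigma^v(t-1)\wedge k=k$, is essential. A secondary point to treat carefully is that the maximization in regime (iii) is a priori over $u\leq k$, possibly exceeding $\sigma^v(t)$, so the reduction to $u\leq\sigma^v(t)$ from \Cref{undersigma} must be used before the induction hypothesis can be applied.
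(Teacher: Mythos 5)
Your proof is correct and follows essentially the same route as the paper's: backward induction on $t$, a case split on how $\delta$ and $\delta+1$ sit relative to the termination threshold, the bound of \Cref{upper_bound} (with $k\leq\sigma^v(t-1)$) for the boundary case $k+\delta=m$, and the reduction to $u\leq k\wedge\sigma^v(t)$ via \Cref{undersigma} followed by first-order stochastic dominance under (SM) for the continuation case. The only cosmetic difference is that you prove the dominance-versus-monotone-function inequality by summation by parts where the paper cites Lehmann's Condition B'.
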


\begin{proof}
    Let $t \in \{1, \dots, \overline{R}\}$, $k \in \{0, \dots, \sigma^v(t-1)\}, \delta \in \{0, \dots, (n-1)m-1\}$.

    \begin{enumerate}
        \item If $k + \delta + 1 \leq m$ then 
        \begin{align*}
            \varphi^v(t, p_t,k, \delta+1) - \varphi^v(t, p_t, k, \delta) = (v(k) - k p_{t-1}) - (v(k) - kp_{t-1}) = 0
        \end{align*}
        \item  If $k + \delta = m$ then
        \begin{align*}
            \varphi^v(t, p_t,k, \delta+1) - \varphi^v(t, p_t, k, \delta) &= \varphi^v(t, p_t, k, \delta+1) - (v(k) - kp_{t-1})\\
            &\leq  (v(k) - k p_{t-1}) - (v(k) - kp_{t-1}) = 0
        \end{align*}
        \item Lastly for the case $k + \delta > m$, Let us proceed by induction.

        The initialization is immediate at $t=\overline{R}$.

        Now suppose $\forall u \in \{0, \dots, \sigma^v(t)\}, \forall \delta' \in \{0, \dots, (n-1)m-1\}, \varphi^v(t+1,p_{t+1}, u, \delta'+1) \leq \varphi^v(t+1, p_{t+1}, u, \delta')$.
        Per Proposition \ref{undersigma}, $\varphi^v(t,p_t,k,\delta) = g^v(t,k,\delta)= \max_{u \leq k\wedge\sigma^v(t)} \sum_{\delta' = 0}^{(n-1)m} \mathbb{P}^t_{\delta'|\delta}\varphi^v(t+1,p_{t+1},u,\delta')$.
        
        Following an equivalent of stochastic ordering (Condition B' in ~\cite{stochastic_order}), for all $f$ non-increasing function, $\hat{f}: \delta \mapsto \sum_{\delta' = 0}^{(n-1)m} \mathbb{P}^t_{\delta'|\delta}f(\delta')$ is non-decreasing.
        
        Hence, $\delta \mapsto \sum_{\delta' = 0}^{(n-1)m}  \mathbb{P}^t_{\delta'|\delta} \varphi^v(t+1,p_{t+1},u,\delta')$ is non-increasing, thus $\delta \mapsto \varphi^v(t,p_t,k,\delta)$ is non-increasing as maximum of non-increasing functions.
    \end{enumerate}
\end{proof}

 \section{Guarantees}\label{robust}

There is no guarantee that the Bellman strategy would be robust to a change of the opponent's strategy. Thus, one may wonder what happens if the opponent deviates from SB, for instance, what happens when both players use the Bellman strategy. In order words, the Bellman-player is optimal against a SB-player, but what happens when both players think they face a SB-player and play accordingly?

\begin{definition}\label{gain_def}
    Let $v$ be a deterministic valuation and $\nu$ be a random valuation.

    Let $s^v$ (resp. $\xi^\nu$) be a strategy of the $v$-player (resp. $\nu$-player).
    Let $(k_0, \dots, k_{t-1})$ (resp. $(\delta_0, \dots, \delta_{t-1}))$ be the bids of the $v$-player (resp. $\nu$-player) before $t$.
    We define $h_t := (k_0,\delta_0, \dots, k_{t-1}, \delta_{t-1})$ the past history of bids.

    We set $k_t := s^v(t, h_t)$ and $\delta_t := \xi^\nu(t, h_t)$. Note that $\delta_t$ is random and depends on $\nu$, $\xi^\nu$ and all the past information. Therefore, it defines a random vector $h_{t+1} = (h_t, k_t, \delta_t)$.

    The expected utility of the $s^v$-player at round $t$ is given by:
    $$ G_{t, s^v, \xi^\nu}(t, h_t) := 
    \begin{cases}
    v(k_{t-1}) - k_{t-1}p_{t-1} &\text{ if } k_{t-1} + \delta_{t-1} \leq m\\
    \mathbb{E}_{\delta_t}[G_{t+1, s^v, \xi^\nu}(t+1,h_{t+1})] &\text{ if } k_{t-1}+\delta_{t-1} > m \text{ and } t \leq \overline{R}\\
    0 &\text{ otherwise}
    \end{cases}$$
\end{definition}

\begin{remark}
\revision{Note that this definition gives a general expression for the gain of a player. It allows for a deeper comprehension of the interaction between players. Indeed, with this expression, we can compare the expected utility obtained by a player by specifying their strategy and the strategy of the opponent. It allows us to focus on the expected utility of the player rather than their strategy as it becomes an input. Since the previous sections study the case of a Bellman player against a SB player, it should be noted that $G_{t, \beta^v, \sigma^\nu}(t, h_t) = \varphi^v(t,p_t, k_{t-1},\delta_{t-1})$.}
\end{remark}

\revision{The previous remark helps us rewrite the theorems from the previous section with the notation of \Cref{gain_def}. Hence, the following theorem is a reformulation of \Cref{main_theorem} making use of the general expression $G$:}

\begin{theorem}\label{theorem1}
    Let $v$ and $\nu$ as in the definition above and suppose MP (\Cref{MP}). 
    
    Then, $\forall t \in \{0, \dots, \overline{R}\}, \forall (k_r)_{0 \leq r \leq t-1}, \forall (\delta_r)_{0 \leq r \leq t-1}, \forall s^v \text{ strategy},$
    $$G_{t, \beta^v, \sigma^\nu}(t,h_{t}) \geq G_{t, s^v, \sigma^\nu}(t,h_t)$$
    
    In order words, the Bellman strategy maximizes the expected gain of the player against a SB-player.
\end{theorem}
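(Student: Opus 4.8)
The plan is to prove the inequality by backward induction on the round $t$, combined with the identity $G_{t, \beta^v, \sigma^\nu}(t, h_t) = \varphi^v(t, p_t, k_{t-1}, \delta_{t-1})$ recorded in the remark preceding the statement. Concretely, I would establish by downward induction from $t = \overline{R}$ that for every history $h_t$ ending with bids $(k_{t-1}, \delta_{t-1})$ and every admissible strategy $s^v$,
$$ G_{t, s^v, \sigma^\nu}(t, h_t) \leq \varphi^v(t, p_t, k_{t-1}, \delta_{t-1}). $$
Since the right-hand side equals $G_{t, \beta^v, \sigma^\nu}(t, h_t)$ by the remark, this inequality is exactly the claim. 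In effect this re-derives, directly in the $G$-notation, the optimality of the Bellman value already contained in \Cref{bellman_solution} and \Cref{main_theorem}, which is why the statement is a reformulation of \Cref{main_theorem}.

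For the terminated case $k_{t-1} + \delta_{t-1} \leq m$, handled uniformly at every $t$, both $G_{t, s^v, \sigma^\nu}$ and $\varphi^v$ return the terminal payoff $v(k_{t-1}) - k_{t-1} p_{t-1}$, so equality holds with no optimization involved. This also settles the base case $t = \overline{R}$: there $\varphi^v(\overline{R}, \cdot) = 0$, while $G_{\overline{R}, s^v, \sigma^\nu}$ is either this same terminal payoff (if the auction has terminated) or $0$ (if it has not, since $G_{\overline{R}+1, \cdot} = 0$ for $t > \overline{R}$), and the inequality is immediate.

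For the induction step, suppose $k_{t-1} + \delta_{t-1} > m$ so that the auction continues. By \Cref{gain_def}, writing $k_t = s^v(t, h_t)$ and $\delta_t = \sigma^\nu(t)$, we have $G_{t, s^v, \sigma^\nu}(t, h_t) = \mathbb{E}_{\delta_t}[G_{t+1, s^v, \sigma^\nu}(t+1, h_{t+1})]$ with $h_{t+1} = (h_t, k_t, \delta_t)$. Here the Markov assumption (MP) enters: conditioned on the whole history, the law of $\delta_t = \sigma^\nu(t)$ depends only on $\delta_{t-1} = \sigma^\nu(t-1)$, so the expectation rewrites as $\sum_{\delta'} \mathbb{P}(\sigma^\nu(t) = \delta' \mid \sigma^\nu(t-1) = \delta_{t-1})\, G_{t+1, s^v, \sigma^\nu}(t+1, (h_t, k_t, \delta'))$. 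Applying the induction hypothesis to each summand bounds this by $\sum_{\delta'} \mathbb{P}(\sigma^\nu(t) = \delta' \mid \sigma^\nu(t-1) = \delta_{t-1})\, \varphi^v(t+1, p_{t+1}, k_t, \delta')$. Finally, the eligibility rule forces $k_t \leq k_{t-1}$, so this quantity is at most the maximum over all $u \leq k_{t-1}$, which is precisely the second branch of \Cref{simple_bellman} and equals $\varphi^v(t, p_t, k_{t-1}, \delta_{t-1})$, completing the induction.

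The main obstacle is the reduction of the conditional expectation over $\delta_t$ to a transition depending only on $\delta_{t-1}$: this is exactly the content of MP and is what lets the history-dependent gain $G$ be dominated by the history-independent value $\varphi^v$. Everything else is the standard verification argument that the Bellman value dominates the value of any feasible policy; the only auction-specific ingredients are the eligibility constraint $k_t \leq k_{t-1}$, which supplies the correct feasible set for the maximization, and the termination payoff, both of which match the two branches of \Cref{simple_bellman} by construction.
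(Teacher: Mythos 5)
Your argument is correct. It is worth noting, though, that the paper itself gives no standalone proof of this statement: it declares the theorem a reformulation of \Cref{main_theorem}, relying on the chain ``$\psi$ is the optimal value by the general POMDP/dynamic-programming theory of \Cref{bellman_solution} (delegated to Bertsekas), $\psi=\varphi^v$ by the sufficient-statistic argument of \Cref{main_theorem}, and $G_{t,\beta^v,\sigma^\nu}=\varphi^v$ by the preceding remark.'' What you do differently is unpack the first link of that chain into an explicit verification argument: a backward induction showing $G_{t,s^v,\sigma^\nu}(t,h_t)\leq\varphi^v(t,p_t,k_{t-1},\delta_{t-1})$ for every admissible $s^v$, with MP invoked exactly where it is needed to collapse the history-conditioned law of $\sigma^\nu(t)$ to a transition from $\delta_{t-1}$, and the eligibility constraint $k_t\leq k_{t-1}$ supplying the feasible set of the maximization in \Cref{simple_bellman}. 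This buys a self-contained proof in the $G$-notation at the cost of re-deriving the standard policy-domination argument; the paper's route is shorter but leaves the optimality-over-strategies step implicit in the cited DP machinery. One minor caveat: your base case asserts that at $t=\overline{R}$ the terminated branch of $\varphi^v$ returns the terminal payoff while the unterminated branch returns $0$; the paper's definition is ambiguous there (the clause $\varphi^v(\overline{R},\cdot)=0$ is stated unconditionally, and the paper's own later proofs use it that way), so you are silently adopting the reading under which the terminal-payoff branch takes precedence --- this is the reading needed for the remark's identity $G_{t,\beta^v,\sigma^\nu}=\varphi^v$ to hold at $t=\overline{R}$, but it deserves a sentence making the convention explicit.
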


\begin{theorem}\label{theorem2}
Let $v$ and $\nu$ as defined in theorem \ref{theorem1}. 

Suppose SM (\Cref{SM}).Then, the expected gain of a Bellman-player against a Bellman opponent is higher than the expected gain obtained when the opponent is an SB-player, i.e.\

For $t \in \{0, \dots, \overline{R}\}$, $(k_r)_{0\leq r<t} \in \{0, \dots, m\}^t$ and $(\delta_r)_{0\leq r<t} \in \{0, \dots, (n-1)m\}^t$ two sequences of bids and $h_t = (k_0, \delta_0, \dots, k_{t-1}, \delta_{t-1})$ the past history, we have
    $$G_{t, \beta^v, \beta^\nu}(t,h_{t}) \geq G_{t, \beta^v, \sigma^\nu}(t,h_{t})$$
\end{theorem}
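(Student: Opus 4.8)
The plan is to prove the inequality by backward induction on $t$, with $h_t$ fixed, exploiting the identity $G_{t,\beta^v,\sigma^\nu}(t,h_t)=\varphi^v(t,p_t,k_{t-1},\delta_{t-1})$ recorded after \Cref{gain_def}. This turns the right-hand side into the value function $\varphi^v$ already studied in \Cref{Bellman}, so the whole task reduces to showing $G_{t,\beta^v,\beta^\nu}(t,h_t)\ge\varphi^v(t,p_t,k_{t-1},\delta_{t-1})$. The degenerate cases are immediate: if $k_{t-1}+\delta_{t-1}\le m$ both quantities equal $v(k_{t-1})-k_{t-1}p_{t-1}$, and for $t>\overline{R}$ both vanish. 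Hence only the continuation case $k_{t-1}+\delta_{t-1}>m$ with $t\le\overline{R}$ needs an argument.

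In that case I would first note that, because the player uses $\beta^v$ and the common past is $h_t$, the player's next bid $k_t=\beta^v(t,k_{t-1},\delta_{t-1})$ is \emph{the same} in both worlds. Expanding the recursion of \Cref{gain_def} and inserting the induction hypothesis pointwise, $G_{t+1,\beta^v,\beta^\nu}\ge\varphi^v(t+1,\cdot)$, gives
\[
G_{t,\beta^v,\beta^\nu}(t,h_t)\ \ge\ \mathbb{E}\!\left[\varphi^v\big(t+1,p_{t+1},k_t,\delta_t^{\beta}\big)\mid h_t\right],
\]
where $\delta_t^{\beta}=\beta^\nu(t,\delta_{t-1},k_{t-1})$ is the opponent's Bellman demand. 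On the other hand, the Bellman recursion \Cref{simple_bellman} together with optimality of $k_t$ expresses the right-hand side as $\varphi^v(t,p_t,k_{t-1},\delta_{t-1})=\mathbb{E}[\varphi^v(t+1,p_{t+1},k_t,\delta_t^{\sigma})\mid h_t]$, where $\delta_t^{\sigma}$ follows the one-step SB transition kernel issued from $\delta_{t-1}$. It therefore suffices to compare two one-step expectations of the single function $\varphi^v(t+1,p_{t+1},k_t,\cdot)$.

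Here the two key ingredients enter. By \Cref{decreasingdelta} the map $\delta'\mapsto\varphi^v(t+1,p_{t+1},k_t,\delta')$ is non-increasing (this is precisely where \Cref{SM} is used), so it is enough to establish the stochastic domination $\delta_t^{\beta}\preceq_{\mathrm{st}}\delta_t^{\sigma}$ of the two conditional laws. The driving fact is the pointwise bound $\delta_t^{\beta}=\beta^\nu(t,\delta_{t-1},k_{t-1})\le\sigma^\nu(t)=\delta_t^{\sigma}$ supplied by \Cref{lemma1}: a Bellman opponent never demands more than it would under SB.

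The hard part — and the reason \Cref{SM} must reappear beyond its use in \Cref{decreasingdelta} — is that $\delta_t^{\beta}$ and $\delta_t^{\sigma}$ are conditioned on \emph{different} events, since the observed history $h_t$ is generated by $\beta^\nu$ in the first world and by $\sigma^\nu$ in the second; these two conditionings constrain the random valuation $\nu$ differently, so the pointwise inequality of \Cref{lemma1} does not transfer to the conditional laws for free. I would resolve this by a coupling on a common realization of $\nu$, using the stochastic monotonicity of the SB kernel (\Cref{SM}) to propagate the pathwise domination $\beta^\nu(s,\cdot)\le\sigma^\nu(s)$ through the Markov transitions, keeping the Bellman opponent's demand process stochastically below the SB chain started at $\delta_{t-1}$. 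Making this conditional stochastic domination rigorous is the main obstacle; once it is secured, monotonicity of $\varphi^v$ closes the induction.
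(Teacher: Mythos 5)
Your skeleton matches the paper's: backward induction, the identification $G_{t,\beta^v,\sigma^\nu}=\varphi^v$, the pointwise bound from \Cref{lemma1}, and the monotonicity of $\varphi^v$ in $\delta$ from \Cref{decreasingdelta}. Where you diverge is in the last step, and there you manufacture an obstacle that the paper's argument never has to face. In the paper's proof the two quantities being compared, $\varphi^v\bigl(t+1,p_{t+1},k_t,\beta^\nu(t,\delta_{t-1},k_{t-1})\bigr)$ and $\varphi^v\bigl(t+1,p_{t+1},k_t,\sigma^\nu(t)\bigr)$, are functions of the \emph{same} realization of $\nu$ sitting under a \emph{single} expectation: \Cref{lemma1} applied to the opponent gives $\beta^\nu(t,\delta_{t-1},k_{t-1})\le\sigma^\nu(t)$ for every realization of $\nu$, so \Cref{decreasingdelta} yields the inequality between the integrands pathwise (the hypotheses of \Cref{decreasingdelta} are met because $\beta^v(t,k_{t-1},\delta_{t-1})\le\sigma^v(t)$ as well), and taking the common expectation finishes the step. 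There is no comparison of two differently-conditioned laws, no stochastic domination $\preceq_{\mathrm{st}}$, and no coupling; \Cref{SM} enters only through \Cref{decreasingdelta}, not ``beyond'' it as you suggest.

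Because you explicitly leave the ``conditional stochastic domination'' as the main unresolved obstacle and only sketch a coupling to address it, the proposal as written is incomplete. The repair is not to build the coupling but to avoid splitting the comparison into two expectations over two processes in the first place: keep a single expectation over $\nu$ given the fixed history $h_t$, note that $\delta_t^{\beta}$ and $\delta_t^{\sigma}$ are both measurable functions of that same $\nu$ with $\delta_t^{\beta}\le\delta_t^{\sigma}$ almost surely, and apply the monotonicity of $\varphi^v(t+1,p_{t+1},k_t,\cdot)$ inside the integral before identifying the result with $G_{t,\beta^v,\sigma^\nu}(t,h_t)$. (You are right that the informal conditioning in \Cref{gain_def} leaves some slack in the final identification $\mathbb{E}[\varphi^v(t+1,p_{t+1},k_t,\sigma^\nu(t))]=G_{t,\beta^v,\sigma^\nu}(t,h_t)$, but the paper does not resolve that with a coupling either, and your proof should not hinge on one.)
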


In this case, the opponent playing $\beta^\nu$ is an aggregation of Bellman-players.  

\begin{proof}
    Let us prove it by induction. 
    
    For $t = \overline{R}$, $G_{t, \beta^v, \beta^\nu}(t,h_{t}) = 0 = G_{t, \beta^v, \sigma^\nu}(t,h_t)$.
    
    Let $t \in \{0, \dots, \overline{R}\}$. Let $(k_r)_{r<t}$ and $(\delta_r)_{r<t}$ be two sequences of bids. Suppose $\forall k_t \leq k_{t-1}$,$\forall \delta_t \leq \delta_{t-1}$,$G_{t+1, \beta^v, \beta^\nu}(t,h_t,k_t, \delta_{t}) \geq G_{t, \beta^v, \sigma^\nu}(t,h_t,k_t, \delta_{t})$.

    If $k_{t-1} + \delta_{t-1} \leq m$ then $G_{t, \beta^v, \beta^\nu}(t,h_t) = v(k_{t-1}) - k_{t-1}p_{t-2} = G_{t, \beta^v, \sigma^\nu}(t,h_t)$. otherwise,
\begin{align*}
                G_{t, \beta^v, \beta^\nu}(t,h_t) &= \mathbb{E}[G_{t+1, \beta^v, \beta^\nu}(t+1,h_t,\beta^v(t, k_{t-1},\delta_{t-1}),\beta^\nu(t, \delta_{t-1},k_{t-1}))]\\
                &\geq \mathbb{E}[G_{t+1, \beta^v, \sigma^\nu}(t+1,h_t,\beta^v(t, k_{t-1},\delta_{t-1}),\beta^\nu(t, \delta_{t-1},k_{t-1}))]\\
                &\; \hfill\text{  by induction hypothesis}\\
                &= \mathbb{E}[\varphi^v(t+1,p_{t+1}, \beta^v(t, k_{t-1}, \delta_{t-1}), \beta^\nu(t, \delta_{t-1},k_{t-1})]
            \end{align*}
$\beta^v(t, k_{t-1},\delta_{t-1}) \leq \sigma^v(t)$ and $\beta^\nu(t, \delta_{t-1}, k_{t-1}) \leq \sigma^\nu(t) (*)$ thus we can apply proposition \ref{decreasingdelta}.
\begin{align*}
                G_{t, \beta^v, \beta^\nu}(t,h_t) &\geq \mathbb{E}[\varphi^v(t+1,p_{t+1}, \beta^v(t, k_{t-1}, \delta_{t-1}), \beta^\nu(t, \delta_{t-1},k_{t-1})]\\
                &\geq \mathbb{E}[\varphi^v(t+1,p_{t+1}, \beta^v(t, k_{t-1}, \delta_{t-1}), \sigma^\nu(t))]\\
                &= G_{t, \beta^v, \sigma^\nu}(t,h_t)
            \end{align*}
\end{proof}

Intuitively, a Bellman-player would try to end the auction as soon as possible as long as the cost of stopping the auction outweighs the cost of rising the price. SB, as a myopic strategy enters the auction greedily whereas Bellman estimates when it is no longer profitable to be greedy. Therefore, when both players play the Bellman strategy, they both expect their opponent to be greedy, thus they reduce their demands in order to make the auction stop earlier. This behavior leads to an increase of the player's expected utility.

This result is actually more general as we do not make use of the fact that the opponent plays Bellman but only that their demand is less than the demand they would have made had they played SB.

\begin{theorem}\label{theorem3}
    Let $v$ and $\nu$ as defined in theorem \ref{theorem1} and suppose SM. Let $t \in \{0, \dots, \overline{R}\}$, $(k_r)_{r<t} \in \{0, \dots, m\}^t$, $(\delta_r)_{r<t} \in \{0, \dots, (n-1)m\}^t$ two sequences of bids and $h_t = (k_0, \delta_0, \dots, k_{t-1}, \delta_{t-1})$. Let $\xi^\nu$ be a strategy such that $\forall t \in \{0, \dots, \overline{R}\}, \xi^\nu(t, h_t) \leq \sigma^\nu(t)$. Then, the expected gain of a Bellman-player against $\xi^\nu$ is bounded from below by the expected gain against a SB-player of valuation $\nu$.

    In order words,  
    $$G_{t, \beta^v, \xi^\nu}(t,h_t) \geq G_{t, \beta^v, \sigma^\nu}(t,h_t)$$
\end{theorem}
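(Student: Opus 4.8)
The plan is to mirror the backward induction used in the proof of \Cref{theorem2}, observing that \Cref{theorem3} is exactly the statement one recovers from that proof once the specific Bellman form of the opponent is discarded. Indeed, the only property of the opponent exploited in \Cref{theorem2} was the pointwise domination $\beta^\nu(t,\delta_{t-1},k_{t-1}) \leq \sigma^\nu(t)$ (the step marked there with an asterisk), which followed from \Cref{lemma1} applied to the opponent's valuation. Here this domination is granted directly by the hypothesis $\xi^\nu(t,h_t) \leq \sigma^\nu(t)$, so the same argument applies after the substitution $\beta^\nu \rightarrow \xi^\nu$.

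Concretely, I would argue by downward induction on $t$. The base case $t=\overline{R}$ is immediate since both gains vanish. For the inductive step, fix the history $h_t$ and split on the termination condition: if $k_{t-1}+\delta_{t-1}\leq m$ the auction has stopped and both sides reduce to the same terminal reward $v(k_{t-1})-k_{t-1}p_{t-1}$, hence are equal. In the remaining case $k_{t-1}+\delta_{t-1}>m$, I would unfold the left-hand gain as $G_{t,\beta^v,\xi^\nu}(t,h_t)=\mathbb{E}_{\delta_t}[G_{t+1,\beta^v,\xi^\nu}(t+1,h_{t+1})]$, with the player bidding $\beta^v(t,k_{t-1},\delta_{t-1})$ and the opponent bidding $\delta_t=\xi^\nu(t,h_t)$, apply the induction hypothesis to lower bound the integrand by $G_{t+1,\beta^v,\sigma^\nu}(t+1,h_{t+1})$, and then invoke the identity $G_{t+1,\beta^v,\sigma^\nu}=\varphi^v$ recorded in the remark following \Cref{gain_def} to rewrite it as $\varphi^v(t+1,p_{t+1},\beta^v(t,k_{t-1},\delta_{t-1}),\delta_t)$.

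The decisive step is then the monotonicity of $\varphi^v$ in the opponent's demand, \Cref{decreasingdelta}. Since $\delta_t=\xi^\nu(t,h_t)\leq\sigma^\nu(t)$ for each realization, and since \Cref{lemma1} guarantees $\beta^v(t,k_{t-1},\delta_{t-1})\leq\sigma^v(t)$ so that this bid lies in the range $\{0,\dots,\sigma^v(t)\}$ on which \Cref{decreasingdelta} is valid, I may raise the demand argument from $\delta_t$ up to $\sigma^\nu(t)$ inside $\varphi^v$ without increasing its value. Finally, I would recognize $\mathbb{E}[\varphi^v(t+1,p_{t+1},\beta^v(t,k_{t-1},\delta_{t-1}),\sigma^\nu(t))]$ as $G_{t,\beta^v,\sigma^\nu}(t,h_t)=\varphi^v(t,p_t,k_{t-1},\delta_{t-1})$ through the Bellman recursion~\eqref{simple_bellman}, using that $\beta^v$ is the maximizing argument and that under MP (\Cref{MP}) the conditional law of $\sigma^\nu(t)$ given $h_t$ collapses to the transition kernel $\mathbb{P}^{\,t}_{\cdot\,|\,\delta_{t-1}}$.

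The main obstacle I anticipate is bookkeeping around the expectation: on the left the bid $\delta_t$ is drawn under the deviating strategy $\xi^\nu$, whereas the target $G_{t,\beta^v,\sigma^\nu}$ is assembled from the SB transition kernel, so a priori the two expectations integrate against different laws of $\nu$ given $h_t$. The clean way around this is to read the hypothesis $\xi^\nu(t,h_t)\leq\sigma^\nu(t)$ as a per-realization inequality and to apply \Cref{decreasingdelta} to the integrand realization by realization, reducing the comparison to the value function $\varphi^v$ at a fixed history; the Markov property then identifies the resulting conditional expectation with the kernel of~\eqref{simple_bellman}. One must also verify the admissibility constraints $k_t\leq k_{t-1}$ and $\delta_t\leq\delta_{t-1}$ needed to invoke the induction hypothesis, which follow from $\beta^v(t,k_{t-1},\delta_{t-1})\leq k_{t-1}$ and the eligibility (non-increasing) property of $\xi^\nu$.
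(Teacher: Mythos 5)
Your proposal is correct and follows essentially the same route as the paper: the paper's proof of \Cref{theorem3} simply observes that in the induction of \Cref{theorem2} the Bellman nature of the opponent is used only at the step marked $(*)$, to obtain $\beta^\nu(t,\delta_{t-1},k_{t-1})\leq\sigma^\nu(t)$ before invoking \Cref{decreasingdelta}, and that this bound is supplied directly by the hypothesis on $\xi^\nu$. Your more detailed unfolding of the induction, including the realization-by-realization application of \Cref{decreasingdelta} and the admissibility checks, is a faithful (and somewhat more careful) expansion of the same argument.
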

\revision{
\begin{proof}
    The proof is identical to the one of \Cref{theorem2}.
    Indeed, in the latter, the fact that the opponent plays Bellman only rises in step $(*)$ of the proof in order to apply \Cref{decreasingdelta}. Thus, the result remains true for any strategy such as \Cref{decreasingdelta} is applicable, i.e.\ that are less or equal than SB.
\end{proof}
}

Regardless of the opponent's strategy, as long as the player plays Bellman, they can guarantee an expected gain of at least the expected gain they would have had had they played against a super SB-player.

\section{Probabilistic models}\label{Models}
\newcommand{\player}{\text{player}}
\newcommand{\opp}{\text{opp}}
\newcommand{\Zmax}{Z_{\max}}

In this section, we introduce a framework for our experiments. Mainly, this means to choose a distribution of the $(Z_j)_{1 \leq j \leq (n-1)m}$ defined in the beginning of \Cref{pomdp_model} in order to define the super-player's valuation.

Ideally, such distribution should respect the two assumptions of our theorems:

\begin{enumerate}
    \item[(i)] Markov property (\Cref{MP}).
    \item[(ii)] Stochastic Monotonicity (\Cref{SM}).
\end{enumerate}

\subsection{Uniform case}
The first intuition is to uniformly draw the $(Z^i_j)$, i.e.\ drawing uniformly the slopes of each opponent and then, reordering them in order to form the super-player's valuation. This model would embody the idea that the opponents would have a budget $\Zmax$ that could not be exceeded.

\begin{definition}
    Let $\Zmax > 0$.
    Let $(U^i_j)_{\substack{1 \leq i \leq n-1 \\ 1 \leq j \leq m}} \sim \mathcal{U}[0, 1]$ iid. For $i \in \{1, \dots, n-1\}$, Let $\forall j \in \{1, \dots, m\}, \tilde{Z}^i_j = \Zmax U^i_j$ and define $Z^i_j = \tilde{Z}^i_{(m-j+1)}$ the $(m-j+1)^{th}$ order statistics of $(Z^i_1, \dots, Z^i_m)$. Let $(Z_1, \dots, Z_{(n-1)m})$ be a reordering of $(Z^1_1, \dots, Z^1_m, \dots, Z^i_j, \dots, Z^{n-1}_m)$ such as $Z_1 \geq \dots \geq Z_{(n-1)m}$. 
    
    We call an opponent of valuation $V: k \in \{0, \dots, (n-1)m\} \mapsto\sum_{j = 1}^k Z_j$ uniformly distributed of parameter $\Zmax$.
\end{definition}

\begin{proposition}\label{uniform}
    A uniformly distributed super SB-player satisfies MP and SM.
\end{proposition}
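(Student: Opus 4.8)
The plan is to exploit the fact that, once the slopes of all opponents are pooled together, the per-opponent order-statistics structure disappears and the super-player's demand becomes a plain counting process of i.i.d.\ uniform variables. Concretely, writing $N = (n-1)m$ and $\tilde{Z}^i_j = \Zmax U^i_j$, the pooled family $\{\tilde{Z}^i_j\}$ consists of $N$ i.i.d.\ variables uniform on $[0,\Zmax]$, and by \Cref{model_val} the super-player's bid at price $p_t = \betanit + t\Delta P$ is $\sigma^\nu(t) = \delta(p_t) = \#\{(i,j) : \tilde{Z}^i_j > p_t\}$. The reordering into $Z_1 \ge \dots \ge Z_N$ and the within-opponent sorting are immaterial here, since this count depends only on the unordered family; thus $\sigma^\nu(t)$ is simply the number of the $N$ uniform samples lying above the threshold $p_t$.

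The crucial step is identifying the one-step conditional law. Since the prices are increasing, any sample below $p_t$ stays below every later price and is irrelevant for the future. For the samples still above $p_t$, the uniform law yields a memoryless-type identity: conditioning a $\mathcal{U}[0,\Zmax]$ variable on being above $p_t$ leaves it uniform on $(p_t,\Zmax]$, so it survives past $p_{t+1}$ with probability $r_t := (\Zmax - p_{t+1})/(\Zmax - p_t)$ (and $\delta$ stays $0$ once $p_t \ge \Zmax$). I would then argue, via the standard fact that i.i.d.\ uniform samples conditioned on their interval counts are, within each interval, i.i.d.\ uniform on that interval, that conditioning on the \emph{entire} history $\sigma^\nu(0)=\delta_0,\dots,\sigma^\nu(t)=\delta$ leaves the $\delta$ surviving samples i.i.d.\ uniform on $(p_t,\Zmax]$, independently of $\delta_0,\dots,\delta_{t-1}$. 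Each survives independently with probability $r_t$, whence
$$\mathbb{P}\big(\sigma^\nu(t+1) = \delta' \mid \sigma^\nu(0)=\delta_0,\dots,\sigma^\nu(t)=\delta\big) = \binom{\delta}{\delta'} r_t^{\delta'}(1-r_t)^{\delta-\delta'},$$
i.e.\ $\sigma^\nu(t+1)\mid\sigma^\nu(t)=\delta \sim \mathrm{Binomial}(\delta, r_t)$. As this depends on the past only through $\delta$, the chain is Markov, establishing \Cref{MP}, and $\mathbb{P}^t_{\delta'|\delta}$ is exactly this binomial kernel.

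With the transition kernel identified as binomial, \Cref{SM} reduces to the classical stochastic dominance $\mathrm{Binomial}(\delta+1, r_t) \succeq_{\mathrm{st}} \mathrm{Binomial}(\delta, r_t)$, which I would prove by the obvious coupling: realize $\mathrm{Binomial}(\delta+1,r_t)$ as $\mathrm{Binomial}(\delta,r_t)$ plus an independent $\mathrm{Bernoulli}(r_t)$ term, so the former dominates the latter pathwise, giving $\sum_{\delta'\ge s}\mathbb{P}^t_{\delta'|\delta+1} \ge \sum_{\delta'\ge s}\mathbb{P}^t_{\delta'|\delta}$ for every $s$. The main obstacle is the conditioning argument in the second paragraph: one must verify carefully that knowledge of the earlier counts $\delta_0,\dots,\delta_{t-1}$ conveys no information about the positions of the currently surviving samples beyond the fact that they lie in $(p_t,\Zmax]$. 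Everything else, namely the pooling reduction and the binomial dominance, is routine.
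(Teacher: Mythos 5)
Your proof is correct, and it lands on the same two pillars as the paper's argument---a binomial transition kernel and a Bernoulli coupling for the tail-sum comparison---but it reaches the Markov property by a genuinely different route. The paper works with the ordered vector $(Z_1,\dots,Z_{(n-1)m})$: \Cref{uniform_lemma} supplies its joint density, the event $\{\delta(p_0)=\delta_0,\dots,\delta(p_{t+1})=\delta_{t+1}\}$ is rewritten as $\{Z\in A\}$ for a product of intervals $A$, the integral over $A$ factorizes interval by interval, and the ratio of consecutive probabilities is seen to depend only on $(\delta_t,\delta_{t+1},p_t,p_{t+1})$; the explicit binomial form of the kernel is then obtained by a separate computation (\Cref{binomial}). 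You instead discard the ordering at the outset, note that $\delta(p_t)$ is a count over the unordered pool of $(n-1)m$ i.i.d.\ uniforms, and invoke conditional uniformity given interval counts to obtain the $\mathrm{Binomial}(\delta,r_t)$ kernel and the Markov property in one stroke; this is shorter and makes the independence from the earlier counts transparent, while the paper's density computation is heavier but entirely self-contained. The step you flag as the main obstacle---that conditioning on the whole history of counts leaves the $\delta$ surviving samples i.i.d.\ uniform on $(p_t,Z_{\max}]$---is precisely the factorization the paper carries out by splitting the indicator $\mathbf{1}_{z_1\ge\dots\ge z_{(n-1)m}}$ across the intervals, so it does hold; if you prefer not to cite it as folklore, that computation is the way to verify it. One small correction: the degenerate case is $Z_{\max}\le p_{t+1}$ (not $p_t\ge Z_{\max}$), where the kernel puts all its mass on $\delta'=0$ rather than being a binomial with parameter $r_t\le 0$; your SM coupling is otherwise literally the paper's, via $\{X_1+\dots+X_\delta\ge s\}\subseteq\{X_1+\dots+X_{\delta+1}\ge s\}$.
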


We rely on the following lemma for the first part of the proof:
    \begin{lemma}\label{uniform_lemma}
        The density of $(Z_1, \dots, Z_{(n-1)m})$ is given by 
        
        $z \in \mathbb{R}^{(n-1)m} \mapsto \frac{((n-1)m)!}{Z_{\max}^{(n-1)m}} \mathbf{1}_{z_1 > \dots > z_{(n-1)m}}(z) \mathbf{1}_{z \in [0, Z_{\max}]^{(n-1)m}}(z)$.
    \end{lemma}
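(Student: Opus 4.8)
The plan is to reduce the claim to the classical formula for the joint density of the order statistics of an i.i.d.\ uniform sample. The key observation is that the two-stage sorting procedure in the definition—first reordering the slopes within each opponent's block, then pooling all blocks and re-sorting—produces exactly the same vector as sorting the entire pooled collection in one step.

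First I would note that the $(n-1)m$ variables $\tilde{Z}^i_j = \Zmax U^i_j$ are i.i.d.\ with the uniform distribution on $[0, \Zmax]$. The within-opponent reordering $Z^i_j = \tilde{Z}^i_{(m-j+1)}$ is merely a permutation of each block, so the multiset $\{Z^i_j : 1 \le i \le n-1,\ 1 \le j \le m\}$ coincides with the multiset $\{\tilde{Z}^i_j\}$. Consequently the vector $(Z_1, \dots, Z_{(n-1)m})$, obtained by sorting this multiset in non-increasing order, is exactly the decreasing order statistics of the pooled i.i.d.\ sample $(\tilde{Z}^i_j)$. Ties occur with probability zero because the law is continuous, so the strict inequalities in the target density cause no difficulty.

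It then remains to invoke the standard density formula for order statistics: if $X_1, \dots, X_N$ are i.i.d.\ with density $f$, the vector of decreasing order statistics has density $N!\,\prod_{l=1}^N f(z_l)$ on the open region $\{z_1 > \dots > z_N\}$ and zero elsewhere, the factor $N!$ accounting for the $N!$ permutations that map to the same ordered tuple. Taking $N = (n-1)m$ and $f = \tfrac{1}{\Zmax}\mathbf{1}_{[0,\Zmax]}$ yields
\[
\frac{((n-1)m)!}{\Zmax^{(n-1)m}}\,\mathbf{1}_{z_1 > \dots > z_{(n-1)m}}(z)\,\mathbf{1}_{z \in [0,\Zmax]^{(n-1)m}}(z),
\]
which is precisely the claimed density.

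The only genuinely substantive step is the first one, namely recognizing that the intermediate per-opponent sorting is irrelevant to the law of the final vector; after that the result is a direct application of a textbook fact. I expect no real obstacle beyond stating this reduction cleanly, since the order-statistics density formula can be either cited or derived in one line from the symmetrization argument sketched above.
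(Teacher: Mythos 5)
Your proposal is correct and follows essentially the same route as the paper, which also derives the density by the symmetrization argument over permutations of the pooled i.i.d.\ uniform sample (i.e.\ the standard order-statistics density formula). If anything, you are slightly more explicit than the paper in justifying that the intermediate per-opponent sorting is irrelevant, a point the paper's proof passes over silently.
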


    \Cref{uniform_lemma} is proven in \Cref{Appendix}.

\begin{proof} \textit{First part}
    Let $\Zmax > 0$ and $(Z_j)$ be a sequence of random variables such that an opponent of valuation $V: k \in \{0, \dots, (n-1)m\} \mapsto\sum_{j = 1}^k Z_j$ is uniformly distributed. Let $(\delta_0, \dots, \delta_t, \delta_{t+1})$ be a sequence of bid. Note that $\mathbb{P}(\delta(p_0) = \delta_0, \dots, \delta(p_t) = \delta_t) = 0$ if the sequence does not satisfy $\delta_0 \geq \dots \geq \delta_{t+1}$. Therefore, we suppose the sequence is non-increasing. We denote by $E_{t+1}$ the event $\{ \delta(p_0) = \delta_0, \dots, \delta(p_{t+1}) = \delta_{t+1} \}$, so that 
\begin{align*}
    E_{t+1} =& \{ \delta(p_0) = \delta_0, \dots, \delta(p_{t+1}) = \delta_{t+1} \}\\
        =& \big\{\sum_{j = 1}^{(n-1)m}\mathbf{1}(Z_j > p_0) = \delta_0, \dots, \sum_{j = 1}^{(n-1)m}\mathbf{1}(Z_j > p_{t+1}) = \delta_t \big\}\\
        =& \{0 < Z_{(n-1)m} \leq p_0 \}\cap \dots\cap \{0 < Z_{\delta_0 + 1} \leq p_0\}\cap\{p_{t-1} < Z_{\delta_t + 1} \leq p_t \}\\&\cap\{ p_t < Z_{\delta_t} \leq p_{t+1}\}\cap\dots\cap\{p_t < Z_{\delta_{t+1}+1} \leq p_{t+1} \}\\
        &\cap\{ p_{t+1} < Z_{\delta_{t+1}} \leq \Zmax \}\cap \dots \cap \{ p_{t+1} < Z_1 \leq \Zmax \}\\
        =& \{ Z \in A \}
    \end{align*}
where $A = (p_{t+1}, Z_{\max}]^{\delta_{t+1}} \times \mybigtimes_{s = 0}^t (p_{s}, p_{s+1} ]^{\delta_s - \delta_{s+1}} \times  (0, p_{t} ]^{(n-1)m - \delta_0}$.

\Cref{uniform_lemma} gives
$$\mathbb{P}(E_{t+1}) = \frac{((n-1)m)!}{Z_{\max}^{(n-1)m}}\int_{A} \mathbf{1}_{z_1 \geq \dots \geq z_{(n-1)m}}(\mathbf{z}) d\mathbf{z}$$

Let $\mathbf{z} = (z_1, \dots, z_{(n-1)m}) \in A$. We have
\begin{align*}\mathbf{1}_{z_1 \geq \dots \geq z_{(n-1)m}}(\mathbf{z}) =& \mathbf{1}_{z_1 \geq \dots \geq z_{\delta_{t+1}}}(\mathbf{z})\mathbf{1}_{z_{\delta_{t+1}} \geq z_{\delta_{t+1} + 1}}(\mathbf{z})\mathbf{1}_{z_{\delta_{t+1} + 1} \geq \dots \geq z_{(n-1)m}}(\mathbf{z}) \\
=& \mathbf{1}_{z_1 \geq \dots \geq z_{\delta_{t+1}}}(\mathbf{z})\mathbf{1}_{z_{\delta_{t+1} + 1} \geq \dots \geq z_{(n-1)m}}(\mathbf{z})
\end{align*}
since $z_{\delta_{t+1}} \in (p_{t+1}, \Zmax]$ and $z_{\delta_{t+1}+1} \in (p_{t}, p_{t+1}]$. Let us denote by $z^{(i-j)} = (z_{i+1}, \dots, z_j)$. By iterating the above computation, we obtain 
\begin{align*}
    \mathbb{P}(E_{t+1})=&\frac{((n-1)m)!}{Z_{\max}^{(n-1)m}}\big(\int_{(p_{t+1}, Z_{\max}]^{\delta_{t+1}}} \mathbf{1}_{\{z_1 \geq \dots \geq z_{\delta_{t+1}}\}}(z^{(-\delta_{t+1})}) d(z^{(-\delta_{t+1})})\big)\\
    \times& \prod_{s = 0}^{t} \big(\int_{(p_{s}, p_{s+1}]^{\delta_s - \delta_{s+1}}} \mathbf{1}_{\{z_{\delta_{s+1} +1} \geq \dots \geq z_{\delta_{s}}\}}(z^{(\delta_{s+1} - \delta_{\delta_s})}) d(z^{(\delta_{s+1} - \delta_{\delta_s})}) \big)\\
    \times& \big(\int_{(0, p_0]^{(n-1)m-\delta_{0}}} \mathbf{1}_{\{z_{\delta_{0} +1} \geq \dots \geq z_{(n-1)m} \}}(z^{(\delta_{0} - (n-1)m)}) d(z^{(\delta_{0} - (n-1)m)}) \big)
\end{align*}
Therefore,
\begin{eqnarray*}
\lefteqn{
  \frac{\mathbb{P}(\delta(p_{t+1}) = \delta_{t+1}, \dots, \delta(p_0) = \delta_0)}{\mathbb{P}(\delta(p_{t}) = \delta_{t}, \dots, \delta(p_0) = \delta_0)}} &\\
&= & \frac{\big(\int_{(p_{t+1}, Z_{\max}]^{\delta_{t+1}}} \mathbf{1}_{\{z_1 \geq \dots \geq z_{\delta_{t+1}}\}}(z^{(-\delta_{t+1})}) d(z^{(-\delta_{t+1})})\big)}{\big(\int_{(p_{t}, Z_{\max}]^{\delta_{t}}} \mathbf{1}_{\{z_1 \geq \dots \geq z_{\delta_{t}}\}}(z^{(\delta_{t})}) d(z^{(\delta_{t})})\big)}\\
&&\times \big(\int_{(p_{t}, p_{t+1}]^{\delta_t - \delta_{t+1}}} \mathbf{1}_{\{z_{\delta_{t+1} +1} \geq \dots \geq z_{\delta_{t}}\}}(z^{(\delta_{t+1} - \delta_{\delta_t})}) d(z^{(\delta_{t+1} - \delta_{\delta_t})}) \big)
\end{eqnarray*}
Hence, $\mathbb{P}(\delta(p_{t+1}) = \delta_{t+1}|\delta(p_{t}) = \delta_{t}, \dots, \delta(p_0) = \delta_0)$ depends on $\delta_{t+1}, \delta_t, p_{t+1}$ and $p_t$. $\mathbb{P}(\delta(p_{t+1}) = \delta_{t+1}|\delta(p_{t}) = \delta_{t}, \dots, \delta(p_0) = \delta_0) = \mathbb{P}(\delta(p_{t+1}) = \delta_{t+1}|\delta(p_{t}) = \delta_{t})$.
MP is thus satisfied. \end{proof}

The second part of the proof rely on another lemma:

\begin{lemma}\label{binomial}
    Let $u = \frac{\Zmax - p_{t+1}}{\Zmax - p_t}$.
    If $\Zmax \leq p_{t+1}$, then $\delta(p_{t+1}) = 0$. Otherwise, $\forall \delta \in \{0, \dots, (n-1)m\}, \forall \delta' \leq \delta, \mathbb{P}(\delta(p_{t+1}) = \delta' | \delta(p_{t}) = \delta ) = \binom{\delta}{\delta'} u^{\delta'} (1 - u)^{\delta - \delta'}$
\end{lemma}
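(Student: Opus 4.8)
The plan is to recognize the conditional law of consecutive demands as a binomial \emph{thinning}. First I would dispose of the degenerate case: if $\Zmax \leq p_{t+1}$, then every slope satisfies $Z_j \leq \Zmax \leq p_{t+1}$ almost surely, so $\mathbf{1}(Z_j > p_{t+1}) = 0$ for all $j$, whence $\delta(p_{t+1}) = 0$ a.s. For the main case $\Zmax > p_{t+1}$, the structural input is \Cref{uniform_lemma}: it shows that $(Z_1, \dots, Z_{(n-1)m})$ has the density of the decreasing order statistics of $N := (n-1)m$ i.i.d.\ $\mathcal{U}[0, \Zmax]$ variables $W_1, \dots, W_N$. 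Since $\delta(p) = \sum_j \mathbf{1}(Z_j > p)$ is a count invariant under reordering, I may compute every demand law using the unordered sample $(W_j)$ instead, which is more convenient.

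Then I would run the thinning argument. The event $\{\delta(p_t) = \delta\}$ says that exactly $\delta$ of the $W_j$ exceed $p_t$. Conditionally on this event, by exchangeability the $\delta$ values lying in $(p_t, \Zmax]$ are i.i.d.\ uniform on $(p_t, \Zmax]$, while the remaining $N - \delta$ values lie in $(0, p_t]$ and, since $p_t < p_{t+1}$, cannot contribute to $\delta(p_{t+1})$. Hence $\delta(p_{t+1})$ is precisely the number of those $\delta$ values that also exceed $p_{t+1}$, and each does so independently with probability $\mathbb{P}(W > p_{t+1} \mid W > p_t) = \frac{\Zmax - p_{t+1}}{\Zmax - p_t} = u$. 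This gives $\delta(p_{t+1}) \mid \{\delta(p_t) = \delta\} \sim \mathrm{Binomial}(\delta, u)$, i.e.\ exactly the claimed formula $\binom{\delta}{\delta'} u^{\delta'}(1-u)^{\delta-\delta'}$.

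Alternatively, and more in line with the first part of the proof of \Cref{uniform}, I could compute the ratio directly. Using \Cref{uniform_lemma}, both $\mathbb{P}(\delta(p_t) = \delta, \delta(p_{t+1}) = \delta')$ and $\mathbb{P}(\delta(p_t) = \delta)$ factor over the disjoint intervals $(0,p_t]$, $(p_t,p_{t+1}]$, $(p_{t+1},\Zmax]$ into products of ordered-simplex volumes of the form $(b-a)^{\ell}/\ell!$. In the quotient the common factors $N!/\Zmax^{N}$ and $p_t^{N-\delta}/(N-\delta)!$ cancel, leaving $\binom{\delta}{\delta'}(\Zmax - p_{t+1})^{\delta'}(p_{t+1}-p_t)^{\delta-\delta'}/(\Zmax - p_t)^{\delta}$, which rewrites as $\binom{\delta}{\delta'} u^{\delta'}(1-u)^{\delta-\delta'}$ via $1-u = (p_{t+1}-p_t)/(\Zmax - p_t)$.

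The main obstacle is making the conditional-uniformity claim rigorous in the probabilistic route: one must justify that conditioning on exactly $\delta$ exceedances above $p_t$ leaves those $\delta$ values i.i.d.\ uniform on $(p_t, \Zmax]$ and independent of the rest, which is the memorylessness/exchangeability content specific to the uniform law. The integral route sidesteps this subtlety through an explicit computation, at the cost of carefully tracking the ordered-simplex factorization already set up in the proof of \Cref{uniform}.
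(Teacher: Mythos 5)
Your proposal is correct, but your primary route differs genuinely from the paper's. The paper proves the lemma by brute-force computation: it first identifies the marginal law of $\delta(p_t)$ as $\mathrm{Binomial}((n-1)m, 1-p_t/Z_{\max})$ via its characteristic function, then evaluates the joint probability as a product of ordered-simplex volumes $(b-a)^{\ell}/\ell!$ over the intervals $(0,p_t]$, $(p_t,p_{t+1}]$, $(p_{t+1},Z_{\max}]$ using \Cref{uniform_lemma}, and finally simplifies the quotient --- exactly your ``alternative'' route. Your main argument instead observes that $\delta(p)$ counts exceedances of an exchangeable i.i.d.\ uniform sample, so that conditionally on $\{\delta(p_t)=\delta\}$ the $\delta$ exceeding values are i.i.d.\ uniform on $(p_t,Z_{\max}]$ and the transition is a binomial thinning with retention probability $u=\mathbb{P}(W>p_{t+1}\mid W>p_t)$. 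This is shorter and more conceptual (it explains \emph{why} the answer is binomial, and makes the SM verification in the second part of \Cref{uniform} transparent), at the cost of the conditional-uniformity step you rightly flag, which is rigorously justified by partitioning $\{\delta(p_t)=\delta\}$ over the $\binom{(n-1)m}{\delta}$ choices of exceeding indices and noting the conditional law of $\delta(p_{t+1})$ is the same on each cell. The paper's integral computation avoids that subtlety but requires the explicit density of the order statistics and a longer calculation. You also handle the degenerate case $Z_{\max}\leq p_{t+1}$ explicitly, which the paper's appendix proof omits. Both routes are sound.
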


\Cref{binomial} is proven in \Cref{Appendix}.

\begin{proof} \textit{Second part}

The law of $\delta(p_{t+1})$ conditionally to $\{\delta(p_t) = \delta_t\}$ is a binomial law. It can be interpreted as the number of successes in a sequence of $\delta$ independent experiments. We fix $(X_j)_{j \in \mathbb{N}}$ iid Bernoulli variable of parameter $u$.

Then, we have $\sum_{\delta' = s}^{(n-1)m}\mathbb{P}(\delta(p_{t+1}) = \delta' | \delta(p_{t}) = \delta ) = \sum_{\delta' = s}^{(n-1)m}\binom{\delta}{\delta'} u^{\delta'} (1 - u)^{\delta - \delta'} = \mathbb{P}(X_1 + \dots + X_\delta \geq s)$, the probability to have at least $s$ successes in $\delta$ experiments.

Since $\{X_1 + \dots + X_\delta \geq s\} \subseteq  \{X_1 + \dots + X_{\delta+1} \geq s\}$, $\sum_{\delta' = s}^{(n-1)m}\mathbb{P}(\delta(p_{t+1}) = \delta' | \delta(p_{t}) = \delta ) \leq \sum_{\delta' = s}^{(n-1)m}\mathbb{P}(\delta(p_{t+1}) = \delta' | \delta(p_{t}) = \delta+1 )$. SM is thus also satisfied.   
\end{proof}
\subsection{Exponential case} The exponential case can be interesting to compute also since it is a traditional framework to model counting processes. Indeed, the demand of a super-opponent is nothing but an inverse counting process: we count the number of slopes of the valuation that are still above the price once it is raised.

\begin{definition}\label{exponential_case}
    Let $\lambda > 0$.
    Let $(Z_j)_{1 \leq j \leq (n-1)m}$ such as $\forall j \in \{1, \dots, (n-1)m\}, Z_j - Z_{j+1} \sim \mathcal{E}(\lambda)$ iid with $Z_{(n-1)m+1} = 0$.
    We call a super SB-player  of valuation $V(k) = \sum_{j = 1}^k Z_j$ an exponential super SB-player of parameter $\lambda$.
\end{definition}
\begin{proposition}
    An exponential super SB-player satisfies MP and SM.
\end{proposition}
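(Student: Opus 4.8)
The plan is to reduce both properties to a single structural fact: since the successive slopes $Z_j - Z_{j+1}$ are i.i.d.\ $\mathcal{E}(\lambda)$ (with the convention $Z_{N+1}=0$, where I write $N=(n-1)m$), the sorted breakpoints of the valuation are exactly the arrival times of a Poisson process, so the demand is a truncated, time-reversed count of that process. \textbf{Step 1 (Poisson representation).} Set $E_k := Z_{N-k+1}-Z_{N-k+2}$ for $k=1,\dots,N$; these are a relabeling of the given gaps, hence i.i.d.\ $\mathcal{E}(\lambda)$, and $T_k := Z_{N-k+1} = \sum_{i=1}^k E_i$, so that $T_1<\cdots<T_N$ are the first $N$ arrival times of a Poisson process. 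Extending by further i.i.d.\ exponential gaps yields a genuine rate-$\lambda$ Poisson process $\mathcal{N}$ on $\mathbb{R}_+$ with $\mathcal{N}(p)=\#\{k\ge 1: T_k\le p\}$. Since the $T_k$ are increasing, $Z_j>p \iff T_{N-j+1}>p \iff j\le N-\mathcal{N}(p)$, which gives the key identity $\delta(p)=\sum_{j=1}^N\mathbf{1}\{Z_j>p\}=\max\!\big(N-\mathcal{N}(p),0\big)$.

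\textbf{Step 2 (\Cref{MP}).} From this identity, whenever $\delta(p_t)=\delta_t>0$ we have exactly $\mathcal{N}(p_t)=N-\delta_t$; moreover a non-increasing history $\delta_0\ge\cdots\ge\delta_t$ with $\delta_t>0$ forces all earlier terms to be positive as well, so the event $\{\delta(p_s)=\delta_s,\ 0\le s\le t\}$ coincides with $\{\mathcal{N}(p_s)=N-\delta_s,\ 0\le s\le t\}$, a function of $\mathcal{N}$ on $[0,p_t]$. Independence of the increments of $\mathcal{N}$ makes $\xi_t:=\mathcal{N}(p_{t+1})-\mathcal{N}(p_t)\sim\mathrm{Poisson}(\lambda\Delta P)$ independent of the whole history, and since $\delta(p_{t+1})=\max(\delta_t-\xi_t,0)$, the conditional law of $\delta(p_{t+1})$ depends on the past only through $\delta_t$. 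The case $\delta_t=0$ is immediate because $\delta$ is non-increasing and non-negative, so $0$ is absorbing. This establishes \Cref{MP}.

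\textbf{Step 3 (\Cref{SM}).} By Step 2, $\mathbb{P}^t_{\cdot\,|\delta}$ is the law of $\max(\delta-\xi,0)$ with $\xi\sim\mathrm{Poisson}(\lambda\Delta P)$, so $\sum_{\delta'\ge s}\mathbb{P}^t_{\delta'|\delta}=\mathbb{P}\big(\max(\delta-\xi,0)\ge s\big)$. Coupling the two conditional transitions with a common $\xi$, the map $\delta\mapsto\max(\delta-\xi,0)$ is non-decreasing pointwise, hence $\max(\delta+1-\xi,0)\ge\max(\delta-\xi,0)$ for every value of $\xi$. This monotone coupling yields the stochastic domination $\sum_{\delta'\ge s}\mathbb{P}^t_{\delta'|\delta+1}\ge\sum_{\delta'\ge s}\mathbb{P}^t_{\delta'|\delta}$ for all $s$, which is precisely \Cref{SM}.

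The main obstacle is Step 1 together with the truncation appearing in Step 2. One must justify replacing the finite family of slopes by a full Poisson process and, most delicately, control the cap $\max(\cdot,0)$: conditioning on a \emph{positive} history pins down $\mathcal{N}(p_t)$ exactly, whereas the absorbing value $\delta_t=0$ only gives $\mathcal{N}(p_t)\ge N$. Once this representation and the absorbing-state bookkeeping are in place, \Cref{MP} is a direct consequence of the independent increments of $\mathcal{N}$, and \Cref{SM} follows from the elementary monotone coupling above.
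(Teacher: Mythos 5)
Your proof is correct. For \Cref{MP} you use the same core device as the paper — identifying the reversed breakpoints $Z_{N-k+1}$ with the arrival times of a rate-$\lambda$ Poisson process so that $\delta(p)=\max(N-\mathcal{N}(p),0)$ — but you then verify the Markov property by conditioning directly on the history event $\{\mathcal{N}(p_s)=N-\delta_s,\ s\le t\}$ and invoking independent increments, whereas the paper argues that $\delta(p)$ has the same law as an injective function of the stopped process $N_{p\wedge Z_1}$, which is a Markov chain. Your route is in fact tighter: you get a pathwise identity on an enlarged probability space rather than an equality in law, and you make explicit the bookkeeping around the absorbing state $\delta_t=0$ (where the history only gives $\mathcal{N}(p_t)\ge N$), a point the paper passes over quickly. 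For \Cref{SM} the two arguments genuinely differ: the paper first computes the transition kernel explicitly (its \Cref{proba_exp}) and then compares partial sums of the Poisson probability mass function by hand, while you observe that the transition is the deterministic monotone map $\delta\mapsto\max(\delta-\xi,0)$ driven by a single Poisson variable $\xi$, so a common-$\xi$ coupling yields the stochastic domination immediately. The coupling argument is shorter and more robust (it would survive replacing the exponential gaps by any i.i.d.\ nonnegative gaps with independent increments of the counting process), at the cost of not producing the explicit transition probabilities that the paper reuses elsewhere, e.g.\ in its numerical implementation of the Bellman recursion.
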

\begin{proof} \textit{First part}
    As mentioned, we will show that $\delta(p) = \sum_{j = 1}^{(n-1)m} \mathbf{1}(Z_j > p)$ is closely related to a counting process and more specifically a Poisson process. Let $(T_j)_{j \in \mathbb{N}}$ be defined as follows:
    \begin{itemize}[-]
        \item $T_0 = 0$
        \item $\forall j \in \mathbb{N}, T_{j+1} - T_j \sim \mathcal{E}(\lambda)$ iid
    \end{itemize}
    Then, $N_t := \sup\{n\in \mathbb{N}|T_n \leq t\} = \sum_{j = 0}^{+\infty}\mathbf{1}(T_j \leq t)$ is a Poisson process of parameter $\lambda$ (Chapter 4, ~\cite{LefebvreMario2007ASP}). Note that $T_j$ has the same law as $Z_{m+1-j}$ for $j \in \{1, \dots, m\}$. Hence, since $\delta(p) = \sum_{j = 1}^{(n-1)m} \mathbf{1}(Z_j > p) = (n-1)m - \sum_{j = 1}^{(n-1)m} \mathbf{1}(Z_j \leq p)$, $\delta(p)$ has the same law as $(n-1)m - (N_p \wedge (n-1)m)$. Because $(N_t)$ is a non-decreasing process with integer values, and since $Z_1$ has the same law as $T_m = \inf\{t\in\mathbb{R}_{+}|N_t = (n-1)m\}$, $(N_p \wedge (n-1)m)$ has the same law as $N_{p \wedge Z_1}$. Hence, $\delta(p)$ has the same law as a $f(X_p)$ where $f : x\mapsto (n-1)m - x$ is injective and $X_p = N_{p \wedge Z_1}$ is a Markov chain (since $(N_p)$ is a Markov chain, see Chapter 4 of ~\cite{LefebvreMario2007ASP} and section 7.2. of ~\cite{ash_basic_2008}). Therefore, $\delta(p)$ is a Markov chain. MP is thus satisfied. \end{proof}

    In the second part of the proof, we use the following lemma. Its proof can be found in \Cref{Appendix}.

    \begin{lemma}\label{proba_exp}
    Let $t \in \{0, \dots, \overline{R}\}$, $\delta_t \in \{1, \dots, (n-1)m\}$.
        $\forall \delta_{t+1} \in \{0, \dots, \delta_t\},$\begin{align*}
    \mathbb{P}(\delta(p_{t+1}) = \delta_{t+1} | \delta(p_t) = \delta_t ) &= \begin{cases}
        e^{-\lambda \Delta P}\sum_{j = \delta_t}^{+\infty} \frac{(\lambda \Delta P)^j}{j!} &\text{ if } \delta_{t+1} = 0\\
        e^{-\lambda \Delta P}\frac{(\lambda \Delta P) ^{\delta_t - \delta_{t+1}}}{(\delta_t - \delta_{t+1})!} &\text{ otherwise}
    \end{cases}
\end{align*}
and
    $\mathbb{P}(\delta(p_{t+1}) = \delta_{t+1} | \delta_t = 0) = \mathbf{1}(\delta_{t+1} = 0)$.
    \end{lemma}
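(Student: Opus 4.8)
The plan is to leverage the Poisson-process identification established in the first part of the proof, reducing the transition probability to a statement about the increments of a Poisson process. Writing $M := (n-1)m$ for brevity, recall that the first part identifies, in law as a process, $\delta(p)$ with $M - (N_p \wedge M)$, where $(N_p)_{p\ge 0}$ is the Poisson process of intensity $\lambda$ built from the partial sums $T_j$ of the exponential increments $Z_j - Z_{j+1}$. Since the relevant prices satisfy $p_{t+1} = p_t + \Delta P$, the key tool is the independent-increments property: $K := N_{p_{t+1}} - N_{p_t}$ is a Poisson variable of parameter $\lambda \Delta P$ that is independent of $N_{p_t}$, and hence independent of the conditioning event $\{\delta(p_t)=\delta_t\}$.

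First I would treat the nondegenerate case $\delta_t \in \{1, \dots, M\}$. Conditioning on $\{\delta(p_t) = \delta_t\}$ with $\delta_t \ge 1$ gives $N_{p_t} \wedge M = M - \delta_t < M$, which \emph{deactivates} the cap at time $t$ and pins down $N_{p_t} = M - \delta_t$ exactly (if we had $N_{p_t}\ge M$ the minimum would equal $M$). Substituting $N_{p_{t+1}} = (M - \delta_t) + K$ into $\delta(p_{t+1}) = M - (N_{p_{t+1}} \wedge M)$, I would split according to whether the cap is active at $t+1$: when $K \le \delta_t$ the cap is inactive and $\delta(p_{t+1}) = \delta_t - K$, whereas when $K \ge \delta_t + 1$ the cap yields $\delta(p_{t+1}) = 0$.

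This produces the two announced formulas directly. For $\delta_{t+1} \in \{1, \dots, \delta_t\}$ one has $\delta_t - \delta_{t+1} \in \{0,\dots,\delta_t-1\}$, so the event $\{\delta(p_{t+1}) = \delta_{t+1}\}$ coincides exactly with $\{K = \delta_t - \delta_{t+1}\}$, of probability $e^{-\lambda\Delta P}(\lambda\Delta P)^{\delta_t-\delta_{t+1}}/(\delta_t-\delta_{t+1})!$. For $\delta_{t+1} = 0$ the event is the union of the boundary case $\{K = \delta_t\}$ and the overflow $\{K > \delta_t\}$, i.e. $\{K \ge \delta_t\}$, whose probability is the tail sum $e^{-\lambda\Delta P}\sum_{j \ge \delta_t}(\lambda\Delta P)^j/j!$. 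The degenerate case $\delta_t = 0$ is handled separately: it forces $N_{p_t} \ge M$, hence $N_{p_{t+1}} \ge N_{p_t} \ge M$ by monotonicity of the counting process, so $\delta(p_{t+1}) = 0$ almost surely, giving the indicator $\mathbf{1}(\delta_{t+1} = 0)$.

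\textbf{Main obstacle.} The only delicate point is the bookkeeping around the cap $\wedge M$. One must notice that conditioning on a strictly positive demand deactivates the cap at time $t$, so that $N_{p_t}$ is pinned to an exact value rather than merely bounded below; and one must observe that the outcome $\delta_{t+1}=0$ aggregates both the boundary case $K=\delta_t$ and the overflow $K>\delta_t$, which is precisely what turns an isolated Poisson mass into the tail sum. Everything else is routine: the independent-increments property guarantees that $K$ is Poisson$(\lambda\Delta P)$ and independent of the conditioning, and the clean partition $\{K=\delta_t-\delta_{t+1}\}$ for $\delta_{t+1}\ge 1$ versus $\{K\ge\delta_t\}$ for $\delta_{t+1}=0$ ensures there is no double counting.
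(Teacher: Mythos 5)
Your proposal is correct and follows essentially the same route as the paper's proof: identify $\delta(p)$ in law with $(n-1)m - (N_p\wedge (n-1)m)$ for a Poisson process $N$ of intensity $\lambda$, use the independent-increments property so that $N_{p_{t+1}}-N_{p_t}\sim\mathrm{Poisson}(\lambda\Delta P)$ is independent of the conditioning, and separate the exact-mass events ($\delta_{t+1}\geq 1$) from the tail event ($\delta_{t+1}=0$). Your explicit bookkeeping of when the cap $\wedge\,(n-1)m$ is active is, if anything, slightly more careful than the paper's, which passes from $N_{p\wedge Z_1}$ to $N_p$ with less comment; the degenerate case $\delta_t=0$ is handled the same way in both.
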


\begin{proof} \textit{Second part}
Let $s \in \mathbb{N}$ and $\delta \in \{0, \dots, (n-1)m-1\}$. 
If $s = 0$, then $\sum_{\delta'=s}^{(n-1)m} \mathbb{P}(\delta(p_{t+1}) = \delta' | \delta(p_t) = \delta) = 1 = \sum_{\delta'=s}^{(n-1)m} \mathbb{P}(\delta(p_{t+1}) = \delta' | \delta(p_t) = \delta+1)$.

If $s > 0$, then, if $\delta = 0$, $\sum_{\delta'=s}^{(n-1)m} \mathbb{P}(\delta(p_{t+1}) = \delta' | \delta(p_t) = \delta) = 0 \leq \sum_{\delta'=s}^{(n-1)m} \mathbb{P}(\delta(p_{t+1}) = \delta' | \delta(p_t) = \delta+1)$. 
Otherwise,
\begin{align*}
    \sum_{\delta'=s}^{(n-1)m} \mathbb{P}(\delta(p_{t+1}) = \delta' | \delta(p_t) = \delta) &=\sum_{\delta' = s}^{\delta} \mathbb{P}(\delta(p_{t+1}) = \delta' | \delta(p_t) = \delta)\\
    &= \sum_{\delta' = s}^{\delta} e^{-\lambda \Delta P}\frac{(\lambda \Delta P) ^{\delta - \delta'}}{(\delta - \delta')!}\\
    &= \sum_{\delta' = 0}^{\delta - s}e^{-\lambda \Delta P}\frac{(\lambda \Delta P) ^{\delta'}}{\delta'!}\\
    &\leq \sum_{\delta' = 0}^{\delta+1 - s}e^{-\lambda \Delta P}\frac{(\lambda \Delta P) ^{\delta'}}{\delta'!}\\
    &= \sum_{\delta'=s}^{(n-1)m} \mathbb{P}(\delta(p_{t+1}) = \delta' | \delta(p_t) = \delta+1)
\end{align*}
    All in all, SM is satisfied.
\end{proof}

\begin{remark}
    The exponential setting of \Cref{exponential_case} does not rely on the individual laws of each opponent but rather on the aggregation of the opponents.

    One could wonder if we could model each opponent as a counting process and then aggregate them in such way that (i) and (ii) are satisfied. This would yield to $(Z^i_j)_{\substack{1 \leq i \leq n-1 \\ 1 \leq j \leq m}}$, $(Z_j^i)$ has independent increments of exponential law. Then, $\delta(p) = \sum_{i = 1}^{n-1} \delta_i(p) = (n-1)m - \sum_{i = 1}^{n-1} N^i_{p \wedge Z^i_1}$. However, such a variable is not a Markov chain. The difficulty lies in the fact that for each player, There is a different stopping time, making the state of variable depend on the players that have dropped out the auction.

    A way to circumvent this hardship is to consider that each opponent can make negative demands. Hence, $\delta(p) = (n-1)m - \sum_{i = 1}^{n-1} N^i_{p} \in (-\infty, (n-1)m]$ is a Markov chain. We can interpret this new demand function as follows: once the price is too high, players sell items at the current price rather than buy some. As a result, in a aggregation of two players $A$ and $B$, where $A$ would bid $k$ and $B$ would bid $-1$, then since $A$ wishes to buy $k$ items at the current price, they would buy $1$ from $B$ as they sell one and still ask for $k-1$ items on the auction. The aggregated demand would thus be $k-1$. 
    
    However, in the context of spectrum auctions, such model is unrealistic, that is why we choose to model the aggregated valuation directly. However such interpretation can be useful for modeling stock markets for instance.
\end{remark}

\section{Experimental results}\label{Example}

\Cref{main_theorem} provides a simple algorithm to play an auction optimally against a SB-player, by applying the Bellman strategy. In this section, we quantify numerically the value of this optimum.
We investigate how much the algorithm's expected utility deviates from the utility of a player with perfect information (see the optimization problem \ref{perfect}).
We simulate auctions and evaluate the performance of an agent playing according to the strategy we have outlined. 

\subsection{Simulation setting}

Among the simulations, $\Delta P$ the price increment, $\betanit$ the initial price and $m$ the number of items are fixed. Each simulation is carried out as follows:
\begin{itemize}
    \item We draw a realization of $(Z_j) : z_1, \dots, z_{(n-1)m}$ to define the valuation of the exponential super SB-player. 
    
    Their demand is $\delta(p) = \sum_{j = 1}^{(n-1)m} \mathbf{1}\{z_j > p\}$.
    \item We then run two auctions using two different strategies against the same opponent:
    \begin{enumerate}
        \item For the first auction, we suppose the player has access to the opponent's valuation.
        The player plays with perfect information and obtains an optimal final score $U$, their utility at the end of the auction.
        \item The second strategy is the Bellman strategy defined in \Cref{bellman_strat}. This results in a final score $\hat{U}$.
    \end{enumerate}
    \end{itemize}
After simulating $N$ auctions with the same parameters, we obtain $N$ pairs of scores $(U_1, \hat{U}_1), \dots, (U_N, \hat{U}_N)$.  Those scores can be seen as the realization of two random variables $(U, \hat{U})$ which would give the score of an auction with perfect and imperfect information respectively. We can then estimate and compare the two random variables and the expected gains $\mathbb{E}[U]$ and $\mathbb{E}[\hat{U}]$.

\subsection{Empirical evidence} In order to choose the most realistic parameters, we consider the same parameters as in the French 5G auction: $m = 11$, $\betanit = 70$ and $\Delta P = 3$ (see ~\cite{attribution_freq}).

We model the valuation of the player and the opponent using either the uniform or the exponential models.

\subsubsection{Uniform case}

We have conducted $N = 10,000$ auctions with exponential valuations for $\Zmax \in \{110, 121, 132, 143, 154, 165\}$ in order to mimic different bidding profiles compatible with the 5G auction.
We present the expected gain in the following table:
\begin{table}[H]
\begin{minipage}{.5\linewidth}
\centering
\includegraphics[scale = 0.4]{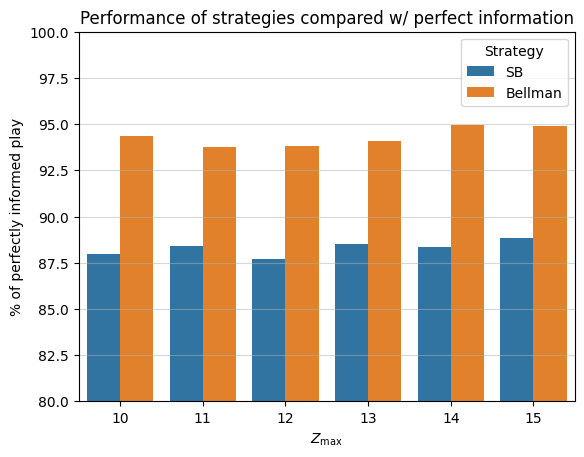}
\end{minipage}
\begin{minipage}{.5\linewidth}
\caption{Empirical expectations}
\centering\footnotesize\setlength{\tabcolsep}{3pt}
    \begin{tabular}{c|c|c|c|c|c|c}
    $\Zmax$ & 110 & 121 & 132 & 143 & 154 & 165 \\
    \hline
    $\mathbb{E}[U]$ & 37 & 40 & 46 & 50 & 54 & 55\\
    \hline
    $\mathbb{E}[\hat{U}]$ & 35 & 38& 43 & 47 & 51 & 52\\
    \hline
    $\mathbb{E}[\hat{U}] / \mathbb{E}[U]$ & 94\% &94\%	&94\%	&94\%	&95\%	&95\%
\end{tabular}
\end{minipage}
\caption{On the graph on the left, we show the expected gain for a player playing either the SB strategy (blue) or the Bellman strategy (orange) against a super SB-player. The $100\%$ on the graph represents the expected utility of a perfectly informed player against the same super SB-player.}
\end{table}

We observe two key points:
\begin{enumerate}
    \item The SB strategy is quite efficient against a SB super-player with expected gain of $88\%$ the maximal expected gain.
    \item However, it is outperformed by the Bellman strategy which is even closer to the maximal expected gain.
\end{enumerate}

\begin{table}[H]
\begin{minipage}{.5\linewidth}
\centering
\includegraphics[scale = 0.4]{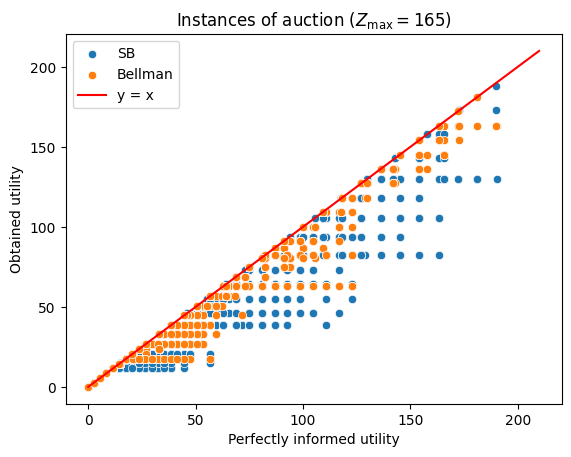}
\end{minipage}
\begin{minipage}{.5\linewidth}
  \caption{Estimation of $\mathbb{P}(U = \hat{U})$}\footnotesize\setlength{\tabcolsep}{3pt}
\centering
    \begin{tabular}{c|c|c|c|c|c|c}
    $\Zmax$ & 110 & 121 & 132 & 143 & 154 & 165 \\
    \hline
    $\mathbb{P}(U = \hat{U})$ & 56\%	&61\%	&60\%	&59\%&	62\%	&62\%
\end{tabular}
\end{minipage}
\caption{On the left, each point represents the result of an auction: on the x-axis we have the utility $U$ and on the y-axis the utility $\hat{U}$. The color of a point indicates which strategy the player is playing. More than half of the sampled auctions playing Bellman yields the same utility as the optimal perfectly informed play.}
\end{table}%

\subsection{Exponential case}

Furthermore, we have conducted $N = 10,000$ auctions with exponential valuations for $\lambda \in \{10, 11, 12, 13, 14, 15\}$ in order to mimic different bidding profiles compatible with the 5G auction. We computed the same figures as the uniform case. We observe that the conclusions are the same.

\begin{table}[H]
\begin{minipage}{.5\linewidth}
\centering
\includegraphics[scale = 0.4]{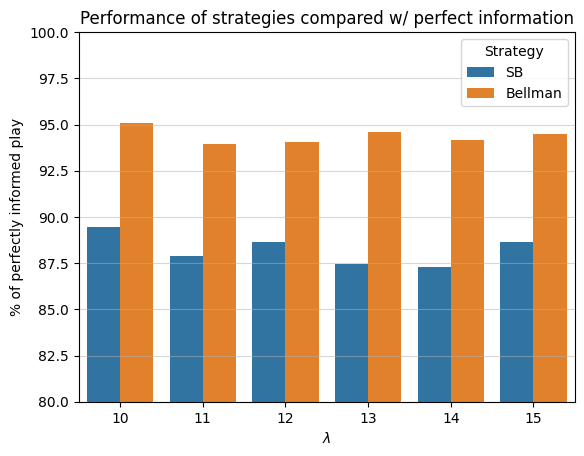}
\end{minipage}
\begin{minipage}{.5\linewidth}
\caption{Empirical expectations}
\centering\footnotesize\setlength{\tabcolsep}{3pt}
    \begin{tabular}{c|c|c|c|c|c|c}
    $\lambda$ & 10 & 11 & 12 & 13 & 14 & 15 \\
    \hline
    $\mathbb{E}[U]$ & 50 & 59 & 66 & 72 & 78 & 83\\
    \hline
    $\mathbb{E}[\hat{U}]$ & 46 & 55& 62 & 68 & 73 & 78\\
    \hline
    $\mathbb{E}[\hat{U}] / \mathbb{E}[U]$ & 91\% &94\%	&94\%	&95\%	&94\%	&94\%
\end{tabular}
\end{minipage}
\end{table}%

\begin{table}[H]
\begin{minipage}{.5\linewidth}
\centering\footnotesize\setlength{\tabcolsep}{3pt}
    \begin{tabular}{c|c|c|c|c|c|c}
    $\lambda$ & 10 & 11 & 12 & 13 & 14 & 15 \\
    \hline
    Freq & 90\%	&89\%	&91\%	&92\%&	90\%	&91\%
\end{tabular}
\end{minipage}
\begin{minipage}{.5\linewidth}
\centering\footnotesize\setlength{\tabcolsep}{3pt}
    \begin{tabular}{c|c|c|c|c|c|c}
    $\lambda$ & 10 & 11 & 12 & 13 & 14 & 15 \\
    \hline
    $\mathbb{P}(U = \hat{U})$ & 49\%	&54\%	&57\%	&61\%&	58\%	&58\%
\end{tabular}
\end{minipage}
\caption{Estimation of $\mathbb{P}(\hat{U} \geq 80\% U)$ and $\mathbb{P}(\hat{U} = U)$}
\end{table}

From a practical point of view, data show that the outcome of our strategy rarely differs from the optimal outcome as shown by the graphs. In at least $89\%$ of auctions, the obtained utility is higher than $80\%$ of the best-possible utility. Furthermore, in at least half of the sample, the strategy achieved the best possible utility.

From those figures, we can conjecture that against an SB-opponent, the expected utility of a Bellman-player is higher than the expected utility of an SB-player (that is $G_{t, \beta^v, \sigma^\nu}(t, h_t) > G_{t, \sigma^v, \sigma^\nu}(t, h_t)$). Therefore, given that the opponent plays SB, the player has no interest to deviate from the Bellman strategy to SB. This would entail that the SB strategy is not an equilibrium. However, the question remains whether the Bellman strategy is at equilibrium.

\section{Conclusion}%

We have modeled a real-life auction against a straightforward but realistic strategy as a POMDP, obtained the optimal strategy, and compared the performances of this strategy to a perfectly informed player. This strategy is robust in the sense that its expected gain does not decrease if opponents change their strategy for a strategy within a reasonable class. Our results on this strategy can pave the way to study an equilibrium of Clock Plus auctions with a single category.

\revision{One may note some limitation to the model as our strategy requires to have some knowledge of the distribution of the opponents' valuation and its computation is faster when this distribution follows a Markov Chain.
Those points can be tough in practice. Firstly, valuations are sensitive information for telecommunication companies, thus knowledge on their distributions is hard to acquire. Secondly, the Markov Chain assumption is strong. As a matter of fact, the Markov Chain assumption supposes that the aggregated demand of the operators does not depend on all the history of the auction. This property has been proven for two distribution models. However, it would not hold for any distribution.
Moreover, valuations may revised during the auction which is a case we did not explore. Indeed, in practice, a company could adjust its budget to match its competitors' if it did not correctly assess and anticipate their bids. Future study could thus try to model more complex valuations such as history-dependent valuations.}

Future work would also focus on the question of equilibrium. Indeed, we would have to compute the expected gain of an SB-player against a Bellman-opponent and compare it to the expected gain using the Bellman strategy. This comparison would either show that the auction has no Nash equilibrium or that the Bellman strategy is a Nash equilibrium. Furthermore, we have compared numerically the Bellman strategy and the perfectly informed play. We could extend this study by having theoretical guarantees such as explicit lower bounds. Lastly, one could study the scaling of this result in higher dimension auctions such as the SAA (Simultaneous Ascending Auction) where players are allowed to bid on multiple items rather than a number of items, raising their prices individually.
\newpage


\begin{thebibliography}{27}
\expandafter\ifx\csname natexlab\endcsname\relax\def\natexlab#1{#1}\fi
\providecommand{\url}[1]{\texttt{#1}}
\providecommand{\href}[2]{#2}
\providecommand{\path}[1]{#1}
\providecommand{\DOIprefix}{doi:}
\providecommand{\ArXivprefix}{arXiv:}
\providecommand{\URLprefix}{URL: }
\providecommand{\Pubmedprefix}{pmid:}
\providecommand{\doi}[1]{\href{http://dx.doi.org/#1}{\path{#1}}}
\providecommand{\Pubmed}[1]{\href{pmid:#1}{\path{#1}}}
\providecommand{\bibinfo}[2]{#2}
\ifx\xfnm\relax \def\xfnm[#1]{\unskip,\space#1}\fi
\bibitem[{Milgrom(1985)}]{auction_theory}
\bibinfo{author}{P.~R. Milgrom}, \bibinfo{title}{{Auction Theory}},
  \bibinfo{type}{Cowles Foundation Discussion Papers} \bibinfo{number}{779},
  Cowles Foundation for Research in Economics, Yale University,
  \bibinfo{year}{1985}.
\bibitem[{Milgrom(2000)}]{auction_to_work}
\bibinfo{author}{P.~R. Milgrom}, \bibinfo{title}{{Putting Auction Theory to
  Work}}, number \bibinfo{number}{9780521551847} in \bibinfo{series}{Cambridge
  Books}, \bibinfo{publisher}{Cambridge University Press},
  \bibinfo{year}{2000}.
\bibitem[{Cramton et~al.(1998)Cramton, McMillan, Milgrom, Miller, Mitchell,
  Vincent, and Wilson}]{ausubel_ascending_nodate}
\bibinfo{author}{P.~Cramton}, \bibinfo{author}{J.~McMillan},
  \bibinfo{author}{P.~R. Milgrom}, \bibinfo{author}{B.~Miller},
  \bibinfo{author}{B.~Mitchell}, \bibinfo{author}{D.~Vincent},
  \bibinfo{author}{R.~Wilson}, \bibinfo{title}{Simultaneous ascending auctions
  with package bidding}, \bibinfo{year}{1998}.
\bibitem[{Marsden(2024)}]{nera}
\bibinfo{author}{R.~Marsden}, \bibinfo{title}{Round-by-round : Learnings from
  the First 35 years of Spectrum Auctions}, \bibinfo{publisher}{Nera},
  \bibinfo{address}{1166 Avenue of the Americas, 24th floor, New York City, New
  York 10036, USA}, \bibinfo{year}{2024}.
\bibitem[{Ausubel et~al.(2006)Ausubel, Cramton, and
  Milgrom}]{ausubel_clock_auction}
\bibinfo{author}{L.~M. Ausubel}, \bibinfo{author}{P.~Cramton},
  \bibinfo{author}{P.~R. Milgrom}, \bibinfo{title}{The Clock-Proxy Auction: A
  Practical Combinatorial Auction Design}, \bibinfo{publisher}{MIT Press},
  \bibinfo{year}{2006}, pp. \bibinfo{pages}{115--138}.
\bibitem[{Milgrom and Strulovici(2009)}]{milgrom_substitute_2009}
\bibinfo{author}{P.~R. Milgrom}, \bibinfo{author}{B.~Strulovici},
\newblock \bibinfo{title}{Substitute goods, auctions, and equilibrium},
\newblock \bibinfo{journal}{Journal of Economic Theory} \bibinfo{volume}{144}
  (\bibinfo{year}{2009}) \bibinfo{pages}{212--247}.
\bibitem[{Levin and Skrzypacz(2016)}]{levin_properties_2016}
\bibinfo{author}{J.~Levin}, \bibinfo{author}{A.~Skrzypacz},
\newblock \bibinfo{title}{Properties of the combinatorial clock auction},
\newblock \bibinfo{journal}{American Economic Review} \bibinfo{volume}{106}
  (\bibinfo{year}{2016}) \bibinfo{pages}{2528--2551}.
\bibitem[{ARCEP(2020)}]{attribution_freq}
\bibinfo{author}{ARCEP}, \bibinfo{title}{Grand dossier - 5g attribution des
  fréquences en métropole},
  \bibinfo{howpublished}{\url{https://www.arcep.fr/la-regulation/grands-dossiers-reseaux-mobiles/la-5g/attribution-des-frequences-en-metropole.html}},
  \bibinfo{year}{2020}. \bibinfo{note}{Accessed: 2024-05-24}.
\bibitem[{Myerson(1981)}]{optimal_auctions}
\bibinfo{author}{R.~B. Myerson},
\newblock \bibinfo{title}{Optimal auction design},
\newblock \bibinfo{journal}{Mathematics of Operations Research}
  \bibinfo{volume}{6} (\bibinfo{year}{1981}) \bibinfo{pages}{58--73}.
\bibitem[{Jain et~al.(1984)Jain, Chiu, and Hawe}]{jain1998quantitative}
\bibinfo{author}{R.~Jain}, \bibinfo{author}{D.~Chiu},
  \bibinfo{author}{W.~Hawe}, \bibinfo{title}{A Quantitative Measure Of Fairness
  And Discrimination For Resource Allocation In Shared Computer Systems},
  \bibinfo{type}{Technical Report}, DEC Research Report TR-301,
  \bibinfo{year}{1984}.
\bibitem[{D\"{u}tting et~al.(2024)D\"{u}tting, Kesselheim, and
  Lucier}]{dutting_olog_2020}
\bibinfo{author}{P.~D\"{u}tting}, \bibinfo{author}{T.~Kesselheim},
  \bibinfo{author}{B.~Lucier},
\newblock \bibinfo{title}{An $o(\log \log m)$ prophet inequality for
  subadditive combinatorial auctions},
\newblock \bibinfo{journal}{SIAM Journal on Computing} \bibinfo{volume}{53}
  (\bibinfo{year}{2024}) \bibinfo{pages}{FOCS20--239--FOCS20--275}.
\bibitem[{Eden et~al.(2023)Eden, Feldman, Goldner, Mauras, and
  Mohan}]{eden_constant_2023}
\bibinfo{author}{A.~Eden}, \bibinfo{author}{M.~Feldman},
  \bibinfo{author}{K.~Goldner}, \bibinfo{author}{S.~Mauras},
  \bibinfo{author}{D.~Mohan},
\newblock \bibinfo{title}{Constant approximation for private interdependent
  valuations},
\newblock in: \bibinfo{booktitle}{2023 {IEEE} 64th Annual Symposium on
  Foundations of Computer Science ({FOCS})}, \bibinfo{publisher}{{IEEE}},
  \bibinfo{year}{2023}, pp. \bibinfo{pages}{148--163}.
\bibitem[{Berge et~al.(2023)Berge, Bechler, and Chardy}]{noauthor_mixed_nodate}
\bibinfo{author}{F.~Berge}, \bibinfo{author}{A.~Bechler},
  \bibinfo{author}{M.~Chardy},
\newblock \bibinfo{title}{A mixed integer programming approach for 5g spectrum
  auction strategy optimization},
\newblock in: \bibinfo{booktitle}{ICC 2023 - IEEE International Conference on
  Communications}, \bibinfo{year}{2023}, pp. \bibinfo{pages}{4846--4852}.
\bibitem[{Boutilier(2002)}]{boutilier_pomdp}
\bibinfo{author}{C.~Boutilier},
\newblock \bibinfo{title}{A {POMDP} formulation of preference elicitation
  problems},
\newblock in: \bibinfo{booktitle}{Eighteenth National Conference on Artificial
  Intelligence}, \bibinfo{publisher}{American Association for Artificial
  Intelligence}, \bibinfo{address}{USA}, \bibinfo{year}{2002}, p.
  \bibinfo{pages}{239–246}.
\bibitem[{Cai et~al.(2024)Cai, Feng, Liaw, Mehta, and Velegkas}]{cai_mdp}
\bibinfo{author}{Y.~Cai}, \bibinfo{author}{Z.~Feng}, \bibinfo{author}{C.~Liaw},
  \bibinfo{author}{A.~Mehta}, \bibinfo{author}{G.~Velegkas},
\newblock \bibinfo{title}{User response in ad auctions: An mdp formulation of
  long-term revenue optimization},
\newblock in: \bibinfo{booktitle}{Proceedings of the ACM Web Conference 2024},
  WWW '24, \bibinfo{publisher}{Association for Computing Machinery},
  \bibinfo{address}{New York, NY, USA}, \bibinfo{year}{2024}, p.
  \bibinfo{pages}{111–122}.
\bibitem[{Reeves et~al.(2005)Reeves, Wellman, {MacKie}-Mason, and
  Osepayshvili}]{reeves_exploring_2005}
\bibinfo{author}{D.~M. Reeves}, \bibinfo{author}{M.~P. Wellman},
  \bibinfo{author}{J.~K. {MacKie}-Mason}, \bibinfo{author}{A.~Osepayshvili},
\newblock \bibinfo{title}{Exploring bidding strategies for market-based
  scheduling},
\newblock \bibinfo{journal}{Decision Support Systems} \bibinfo{volume}{39}
  (\bibinfo{year}{2005}) \bibinfo{pages}{67--85}.
\bibitem[{Ausubel and Milgrom(2002)}]{ausubel_ascending_2002}
\bibinfo{author}{L.~M. Ausubel}, \bibinfo{author}{P.~R. Milgrom},
\newblock \bibinfo{title}{{Ascending Auctions with Package Bidding}},
\newblock \bibinfo{journal}{The B.E. Journal of Theoretical Economics}
  \bibinfo{volume}{1} (\bibinfo{year}{2002}) \bibinfo{pages}{1--44}.
\bibitem[{Bikhchandani and Mamer(1997)}]{bikhchandani_competitive_1997}
\bibinfo{author}{S.~Bikhchandani}, \bibinfo{author}{J.~W. Mamer},
\newblock \bibinfo{title}{Competitive equilibrium in an exchange economy with
  indivisibilities},
\newblock \bibinfo{journal}{Journal of Economic Theory} \bibinfo{volume}{74}
  (\bibinfo{year}{1997}) \bibinfo{pages}{385--413}.
\bibitem[{Wellman et~al.(2008)Wellman, Osepayshvili, MacKie-Mason, and
  Reeves}]{Wellman}
\bibinfo{author}{M.~Wellman}, \bibinfo{author}{A.~Osepayshvili},
  \bibinfo{author}{J.~MacKie-Mason}, \bibinfo{author}{D.~Reeves},
\newblock \bibinfo{title}{Bidding strategies for simultaneous ascending
  auctions},
\newblock \bibinfo{journal}{Topics in Theoretical Economics}
  \bibinfo{volume}{8} (\bibinfo{year}{2008}) \bibinfo{pages}{1461--1461}.
\bibitem[{Bulow et~al.(2009)Bulow, Levin, and Milgrom}]{bulow_winning_nodate}
\bibinfo{author}{J.~Bulow}, \bibinfo{author}{J.~Levin}, \bibinfo{author}{P.~R.
  Milgrom}, \bibinfo{title}{Winning Play in Spectrum Auctions},
  \bibinfo{type}{Working Paper} \bibinfo{number}{14765}, National Bureau of
  Economic Research, \bibinfo{year}{2009}.
\bibitem[{Correa and Cristi(2023)}]{correa}
\bibinfo{author}{J.~Correa}, \bibinfo{author}{A.~Cristi},
\newblock \bibinfo{title}{A constant factor prophet inequality for online
  combinatorial auctions},
\newblock in: \bibinfo{booktitle}{Proceedings of the 55th Annual {ACM}
  Symposium on Theory of Computing}, \bibinfo{publisher}{{ACM}},
  \bibinfo{year}{2023}, pp. \bibinfo{pages}{686--697}.
\bibitem[{Baldwin and Klemperer(2019)}]{baldwin_klemperer_tropical}
\bibinfo{author}{E.~Baldwin}, \bibinfo{author}{P.~Klemperer},
\newblock \bibinfo{title}{Understanding preferences: “demand types”, and
  the existence of equilibrium with indivisibilities},
\newblock \bibinfo{journal}{Econometrica} \bibinfo{volume}{87}
  (\bibinfo{year}{2019}) \bibinfo{pages}{867--932}.
\bibitem[{Bertsekas(1995)}]{bertsekas}
\bibinfo{author}{D.~P. Bertsekas}, \bibinfo{title}{Dynamic Programming and
  Optimal Control}, volume~\bibinfo{volume}{1}, \bibinfo{publisher}{Athena
  Scientific}, \bibinfo{year}{1995}, pp. \bibinfo{pages}{185--225}.
\bibitem[{Åström(1965)}]{ref_pomdp_fondateur}
\bibinfo{author}{K.~J. Åström},
\newblock \bibinfo{title}{{Optimal Control of Markov Processes with Incomplete
  State Information I}},
\newblock \bibinfo{journal}{{Journal of Mathematical Analysis and
  Applications}} \bibinfo{volume}{{10}} (\bibinfo{year}{{1965}})
  \bibinfo{pages}{{174--205}}.
\bibitem[{Lehmann(1955)}]{stochastic_order}
\bibinfo{author}{E.~L. Lehmann},
\newblock \bibinfo{title}{Ordered families of distributions},
\newblock \bibinfo{journal}{The Annals of Mathematical Statistics}
  \bibinfo{volume}{26} (\bibinfo{year}{1955}) \bibinfo{pages}{399--419}.
\bibitem[{Lefebvre(2007)}]{LefebvreMario2007ASP}
\bibinfo{author}{M.~Lefebvre}, \bibinfo{title}{Applied Stochastic Processes},
  Universitext, \bibinfo{edition}{1} ed., \bibinfo{publisher}{Springer Nature},
  \bibinfo{address}{New York, NY}, \bibinfo{year}{2007}.
\bibitem[{Ash(2008)}]{ash_basic_2008}
\bibinfo{author}{R.~B. Ash}, \bibinfo{title}{Basic probability theory}, Dover
  books on mathematics, \bibinfo{edition}{1. publ} ed.,
  \bibinfo{publisher}{Dover Publ}, \bibinfo{year}{2008}.

\end{thebibliography}

%
%
%
\section{Appendix}\label{Appendix}

\subsection{Proof of \Cref{lemma1}}

\begin{proof}

Let $v$ a valuation. Let $\forall t, \forall k, \forall \delta, \tilde{\beta}^v(t, k, \delta) = \min(\beta^v(t, k, \delta), \sigma^v(t))$. Let $k_0$ be the initial demand of a $\beta^v$-player.

We define $\begin{cases}
    \beta_0 &= k_0\\
    \beta_{t+1} &= \beta^v(t+1, \beta_t, \sigma^\nu(t))
\end{cases}$
and similarly, $\forall t \geq 0, \sigma_t = \sigma^v(t)$. $(\beta_t)$ represents the sequence of moves for a $\beta^v$-player.
$(\sigma_t)$ represents the sequence of moves for a $\sigma^v$-player. We set $\tilde{\beta}_t = \min(\sigma_{t}, \beta_t)$ which is the sequence of moves for a $\tilde{\beta}^v$-player. Let $T^\beta = \inf\{r \geq 0 | \beta_r + \sigma^\nu(t) \leq m\}$ (resp. $(T^{\tilde{\beta}}=\inf\{r \geq 0 | \tilde{\beta}_r + \sigma^\nu(t) \leq m\}$)) be the stopping time such as the auction terminates at $T^\beta$ (resp. $T^{\tilde{\beta}}$). We immediately have that $T^{\tilde{\beta}} \leq T^\beta$ since $\forall t, \beta_t + \delta_t \geq \tilde{\beta}_t + \delta_t$. Let $U := v({\beta}_{T^{{\beta}}}) - {\beta}_{T^{{\beta}}} p_{T^{{\beta}}-1}$ the utility by playing $\beta$ and $\tilde{U} := v(\tilde{\beta}_{T^{\tilde{\beta}}}) - \tilde{\beta}_{T^{\tilde{\beta}}} p_{T^{\tilde{\beta}}-1}$ the utility by playing $\tilde{\beta}$. 

We have two cases:
\begin{enumerate}
    \item $T^{\tilde{\beta}} = T^{\beta}$. In this case, either $\tilde{\beta}_{T^{\tilde{\beta}}} = \beta_{T^\beta}$ and 
    \begin{equation}
        v(\tilde{\beta}_{T^{\tilde{\beta}}}) - \tilde{\beta}_{T^{\tilde{\beta}}} p_{T^{\tilde{\beta}}-1} = v(\beta_{T^\beta}) - \beta_{T^\beta}p_{T^\beta -1}
    \end{equation}
    or $\tilde{\beta}_{T^{\tilde{\beta}}} = \sigma_{T^{\tilde{\beta}}} =  \sigma_{T^\beta}$, yielding 
    \begin{equation}
    \begin{split}
        v(\tilde{\beta}_{T^{\tilde{\beta}}}) - \tilde{\beta}_{T^{\tilde{\beta}}}p_{T^{\tilde{\beta}}-1} &= v(\sigma_{T^{\tilde{\beta}}}) - \sigma_{T^{\tilde{\beta}}}p_{T^{\tilde{\beta}}-1} \\
        &= v(\sigma_{T^{{\beta}}}) - \sigma_{T^{{\beta}}}p_{T^{{\beta}}-1} \\
        & \geq v(\beta_{T^\beta}) - \beta_{T^\beta}p_{T^\beta -1}
    \end{split}
    \end{equation}
    Hence, $\tilde{U} \geq U$.
    \item $T^{\tilde{\beta}} \neq T^{\beta}$ hence $T^{\tilde{\beta}} < T^{\beta}$. We necessarily have $\tilde{\beta}_{T^{\tilde{\beta}}}= \sigma_{T^{\tilde{\beta}}}$ (otherwise $T^{\tilde{\beta}}$ = $T^{\beta}$) and $p_{T^{\tilde{\beta}}} < p_{T^{{\beta}}}$. therefore,
    \begin{equation}
        \begin{split}
            v(\beta_{T_\beta}) - \beta_{T_\beta}p_{T^\beta-1} &< v(\beta_{T_\beta}) - \beta_{T_\beta}p_{T^{\tilde{\beta}}-1}\\
            &\leq v(\sigma_{T^{\tilde{\beta}}}) - \sigma_{T^{\tilde{\beta}}}p_{T^{\tilde{\beta}}-1}\\
            &= v(\tilde{\beta}_{T^{\tilde{\beta}}}) - \tilde{\beta}_{T^{\tilde{\beta}}}p_{T^{\tilde{\beta}}-1}
        \end{split}
    \end{equation}
\end{enumerate}

therefore, on $\{ T^{\tilde{\beta}} \neq T^{\beta} \}$, $\tilde{U} > U$ and on $\{ T^{\tilde{\beta}} = T^{\beta} \}$, we have $\tilde{U} = U$. Hence, if $\mathbb{P}^t(T^{\tilde{\beta}} \neq T^{\beta}) > 0$ then, $\mathbb{E}[U] < \mathbb{E}[\tilde{U}]$ which is absurd since $\mathbb{E}[U] \geq \mathbb{E}[\tilde{U}]$. Thus, $T^{\tilde{\beta}} = T^{\beta}$ a.s. and $\tilde{U} \geq U$. This implies $\tilde{U} = U$.
Since for all round $t$, for all bids $k$ and $\delta$, $\tilde{\beta}(t,k,\delta) \leq \beta(t,k,\delta)$, we have $\forall t, \forall k, \forall \delta, \tilde{\beta}(t,k,\delta) = \beta(t,k,\delta)$. Furthermore, $\beta = \tilde{\beta}$ verifies $\forall t, \forall k, \forall \delta, \tilde{\beta}(t, k, \delta) \leq \sigma(t)$ which ends the proof.
\end{proof}

\subsection{Proof of \Cref{uniform_lemma}}
\begin{proof}
There exists $\tau$ such that $\forall j \in \{1, ..., m\}, Z_j = \tilde{Z}_{\tau(j)}$ with $\tau$ a random variable of $\mathcal{S}_m$ the set of permutation of $\{1, ..., m\}$.

Let $U$ be an open set of $\mathbb{R}^m$.
\begin{align*}
    \mathbb{P}((Z_1, ..., Z_m) \in U) &= \sum_{\sigma \in \mathcal{S}_m} \mathbb{P}((\tilde{Z}_{\sigma(1)}, ..., \tilde{Z}_{\sigma(m)}) \in U, \sigma = \tau)\\
    &= \sum_{\sigma \in \mathcal{S}_m} \mathbb{P}((\tilde{Z}_{\sigma(1)}, ..., \tilde{Z}_{\sigma(m)}) \in U, \tilde{Z}_{\sigma(1)} \geq ... \geq\tilde{Z}_{\sigma(m)})
\end{align*}

Hence for $\sigma \in \mathcal{S}_m, (\Tilde{Z}_{\sigma(1)}, ...,\Tilde{Z}_{\sigma(m)})$ has the same law as $(\Tilde{Z}_{1}, ...,\Tilde{Z}_m)$. therefore,
\begin{align*}
    \mathbb{P}((Z_1, ..., Z_m) \in U) &= \sum_{\sigma \in \mathcal{S}_m} \mathbb{P}((\tilde{Z}_{1}, ..., \tilde{Z}_{m}) \in U, \tilde{Z}_{1} \geq ... \geq \tilde{Z}_{m})\\
    &= m!\mathbb{P}((\tilde{Z}_{1}, ..., \tilde{Z}_{m}) \in U, \tilde{Z}_{1} \geq ... \geq \tilde{Z}_{m}) \\
    &= m! \int_{\mathbb{R}} f_{\Tilde{Z}_1}(z_1)... f_{\Tilde{Z}_m}(z_m)\mathbf{1}_{z_1 \geq ... \geq z_m}(z)\mathbf{1}_{z \in U}(z) dz\\
    &= \int_U \frac{m!}{Z_{\max}^m}\mathbf{1}_{z \in [0, Z_{\max}]^m}(z) \mathbf{1}_{z_1 \geq ... \geq z_m}(z) dz
\end{align*}
\end{proof}

\subsection{Proof of \Cref{binomial}}
We have
\begin{align*}
  \mathbb{P}(\delta(p_{t+1}) = \delta_{t+1} | \delta(p_t) = \delta_t) &= \frac{((n-1)m)!}{Z_{\max}^{(n-1)m}}\int_{A} \frac{\mathbf{1}_{z_1 \geq ... \geq z_{(n-1)m}}(\mathbf{z}) d\mathbf{z}}{\mathbb{P}(\delta(p_t) = \delta_t)}\end{align*}
where $A$ is defined in the first part of \Cref{uniform}'s proof.

On one hand, by computing the characteristic function of $\delta(p_t)$ $\Phi_{\delta(p_t)}$, we obtain that $\delta(p_t)$ follows a binomial law of parameters $(n-1)m$ and $u = 1 -\frac{p_t}{\Zmax}$.
First notice that for all $i \in \{1, ..., n-1\}$, for all $j \in \{1, ..., m\}$,
\begin{align*}
    \mathbb{E}[e^{it\mathbf{1}(\tilde{Z}^i_j > p_t)}]&=\int_{\mathbb{R}} \exp\Big(it\mathbf{1}(z > p_t)\Big) d\mathbb{P}(z)\\
    &= \int_{0}^{p_t} \frac{1}{Z_{\max}} dz+
    \int_{p_t}^{Z_{\max}} \frac{e^{it}}{Z_{\max}} dz \\
    &= \frac{p_t}{Z_{\max}} + e^{it}\frac{Z_{\max} - p_t}{Z_{\max}} = \frac{p_t}{Z_{\max}} + e^{it}\big(1 - \frac{p_t}{Z_{\max}}\big)
\end{align*}
Then,
\begin{align*}
    \forall t \in \mathbb{R}, \Phi_{\delta(p_t)}(t) &= \mathbb{E}[e^{it\delta(p_t)}]\\
    &= \mathbb{E}[e^{it\sum_{j = 1}^{(n-1)m} \mathbf{1}(Z_j > p_t)}]\\
    &= \mathbb{E}[\prod_{j = 1}^{(n-1)m}e^{it\mathbf{1}(Z_j > p_t)}]\\
    &= \mathbb{E}[\prod_{i = 1}^{n-1}\prod_{j = 1}^{m}e^{it\mathbf{1}(\tilde{Z}^i_j > p_t)}]\\
    &= \prod_{i = 1}^{n-1}\prod_{j = 1}^{m}\mathbb{E}[e^{it\mathbf{1}(\tilde{Z}^i_j > p_t)}]\\
    \Phi_{\delta(p_t)}(t) &= \Big(e^{it}\big(1 - \frac{p_t}{Z_{\max}}\big) + \frac{p_t}{Z_{\max}}\Big)^{(n-1)m}
\end{align*}
Hence, $\mathbb{P}(\delta(p_t) = \delta_t) = \binom{(n-1)m}{\delta_t} \big(1 - \frac{p_t}{\Zmax}\big)^{\delta_t}\big(\frac{p_t}{\Zmax}\big)^{(n-1)m - \delta_t}$.

On the other hand,
\begin{eqnarray*}
\lefteqn{\int_{A} \mathbf{1}_{z_1 \geq ... \geq z_m}(\mathbf{z}) d\mathbf{z} = \int_{A} \mathbf{1}_{z_1 \geq ... \geq z_{\delta_{t+1}}}(\mathbf{z})\mathbf{1}_{z_{\delta_{t+1} + 1} \geq ... \geq z_{\delta_{t}}} (\mathbf{z})\mathbf{1}_{z_{\delta_{t} + 1} \geq ... \geq z_{m}} (\mathbf{z}) d\mathbf{z}}\\
    &=& \Big(\int_{(p_{t+1}, Z_{\max}]^{\delta_{t+1}}} \mathbf{1}_{z_1 \geq ... \geq z_{\delta_{t+1}}}(z_1, ..., z_{\delta_{t+1}}) d(z_1, ..., z_{\delta_{t+1}})\Big)\\
    &&\times \Big(\int_{(p_t, p_{t+1}]^{\delta_t - \delta_{t+1}}} \mathbf{1}_{z_{\delta_{t+1} + 1} \geq ... \geq z_{\delta_{t}}}(z_{\delta_{t+1} + 1}, ..., z_{\delta_{t}}) d(z_{\delta_{t+1} + 1}, ..., z_{\delta_{t}})\Big)\\
    &&\times  \Big(\int_{(0, p_t]^{(n-1)m - \delta_t}} \mathbf{1}_{z_{\delta_{t} + 1} \geq ... \geq z_{m}}(z_{\delta_{t} + 1}, ..., z_{m}) d(z_{\delta_{t} + 1}, ..., z_{m})\Big)
\end{eqnarray*}
Let us define for all $k$, $u_k = \frac{z_k - p_{t+1}}{Z_{\max} - p_{t+1}}$ and $\alpha(u_k) = p_{t+1} + (\Zmax - p_{t+1})u_k$.
\begin{align*}
    I_1 &= \int_{(p_{t+1}, Z_{\max}]^{\delta_{t+1}}} \mathbf{1}_{z_1 \geq ... \geq z_{\delta_{t+1}}}(z_1, ..., z_{\delta_{t+1}}) d(z_1, ..., z_{\delta_{t+1}})\\
    &= (Z_{\max} - p_{t+1})^{\delta_{t+1}} \int_{(0, 1]^{\delta_{t+1}}} \mathbf{1}_{\alpha(u_1) \geq ... \geq \alpha(u_{\delta_{t+1}})}(u_1, ..., u_{\delta_{t+1}}) d(u_1, ..., u_{\delta_{t+1}})\\
    &= (Z_{\max} -p_{t+1})^{\delta_{t+1}} \int_{(0, 1]^{\delta_{t+1}}} \textbf{1}_{u_1 \geq ... \geq u_{\delta_{t+1}}}(u_1, ..., u_{\delta_{t+1}}) d(u_1, ..., u_{\delta_{t+1}})\\
    &= (Z_{\max} - p_{t+1})^{\delta_{t+1}} \int_{(0, 1]} \int_{(0, u_1]} \Big( ... \Big( \int_{(0, u_{\delta_{t+1} - 1}]} 1 du_{\delta_{t+1}}\Big)du_{\delta_{t+1} - 1}\Big) ... \Big) du_1\\
    &= (Z_{\max} - p_{t+1})^{\delta_{t+1}} \big[ \frac{u_{1}^{\delta_{t+1}}}{\delta_{t+1}!} ]^1_0\\
    &= \frac{(Z_{\max} - p_{t+1})^{\delta_{t+1}}}{(\delta_{t+1})!}
\end{align*}
We can derive $I_2$ and $I_3$ using the same method: $I_2 = \frac{(p_{t+1} - p_t)^{\delta_t - \delta_{t+1}}}{(\delta_t - \delta_{t+1})!}$ and $I_3 = \frac{p_t^{(n-1)m - \delta_t}}{((n-1)m - \delta_t)!}$. Lastly,

$$ \int_{A} \mathbf{1}_{z_1 \geq ... \geq z_m}(\mathbf{z}) d\mathbf{z} =  \frac{(Z_{\max} - p_{t+1})^{\delta_{t+1}}(p_{t+1} - p_t)^{\delta_t - \delta_{t+1}}p_t^{(n-1)m - \delta_t}}{(\delta_{t+1})!(\delta_t - \delta_{t+1})!((n-1)m - \delta_t)!}$$

Thus, letting $\mathbb{P}_{\delta_{t+1}|\delta_t}^t = \mathbb{P}(\delta(p_{t+1}) = \delta_{t+1} | \delta(p_{t}) = \delta_{t} )$, we have:
\begin{align*}
    \mathbb{P}_{\delta_{t+1}|\delta_t}^t &= \frac{((n-1)m)!}{Z_{\max}^{(n-1)m}} \frac{\int_{A} \mathbf{1}_{z_1 > ... > z_m}(\mathbf{z}) d\mathbf{z}}{\binom{(n-1)m}{\delta_t} \big(1 - \frac{p_t}{Z_{\max}}\big)^{\delta_t} \big(\frac{p_t}{Z_{\max}}\big)^{(n-1)m -\delta_t} }\\
    &= \frac{((n-1)m)!}{Z_{\max}^{(n-1)m}}\frac{\frac{(Z_{\max} - p_{t+1})^{\delta_{t+1}}(p_{t+1} - p_t)^{\delta_t - \delta_{t+1}}p_t^{(n-1)m - \delta_t}}{(\delta_{t+1})!(\delta_t - \delta_{t+1})!((n-1)m - \delta_t)!}}{\binom{(n-1)m}{\delta_t}(1 - \frac{p_t}{Z_{\max}})^{\delta_t}(\frac{p_t}{Z_{\max}})^{(n-1)m-\delta_t}}\\
   \mathbb{P}_{\delta_{t+1}|\delta_t}^t &= \binom{\delta_t}{\delta_{t+1}} \frac{(\Zmax - p_{t+1})^{\delta_{t+1}}(p_{t+1} - p_t)^{\delta_t - \delta_{t+1}}}{(\Zmax - p_t)^{\delta_t}}\\
    &= \binom{\delta_t}{\delta_{t+1}} \big( \frac{\Zmax - p_{t+1}}{\Zmax - p_{t}}\big)^{\delta_{t+1}}\big( 1 - \frac{\Zmax - p_{t+1}}{\Zmax - p_{t}}\big)^{\delta_t - \delta_{t+1}}
\end{align*}

\subsection{Proof of \Cref{proba_exp}}

First, $\mathbb{P}(\delta(p_{t+1}) = \delta | \delta(p_t) = 0) = 1 \iff \delta = 0$ since $(\delta(p))$ is non-increasing and non-negative. Let us fix $\delta_t > 0$ and $\delta_{t+1} \in \{1, ..., \delta_t\}$. We denote by $\mathbb{P}_{\delta_{t+1}|\delta_t}^t$ the probability $\mathbb{P}(\delta(p_{t+1}) = \delta_{t+1} | \delta(p_t) = \delta_t)$.

\begin{align*}
     \mathbb{P}_{\delta_{t+1}|\delta_t}^t &= \mathbb{P}((n-1)m - N_{p_{t+1} \wedge Z_1} = \delta_{t+1} | (n-1)m - N_{p_{t} \wedge Z_1} = \delta_{t})\\
    &= \mathbb{P}( N_{p_{t+1} \wedge Z_1} = (n-1)m -\delta_{t+1} |  N_{p_{t} \wedge Z_1} = (n-1)m -\delta_{t})\\
    &= \mathbb{P}( N_{p_{t+1}} = (n-1)m -\delta_{t+1} |  N_{p_{t}} = (n-1)m -\delta_{t})\\
    &= \frac{\mathbb{P}( N_{p_{t+1}} = (n-1)m -\delta_{t+1}, N_{p_{t}} = (n-1)m -\delta_{t})}{\mathbb{P}(N_{p_{t}} = (n-1)m -\delta_{t})}\\
    &= \frac{\mathbb{P}( N_{p_{t+1}} - N_{p_{t}} = \delta_{t} - \delta_{t+1}, N_{p_{t}} = (n-1)m -\delta_{t})}{\mathbb{P}(N_{p_{t}} = (n-1)m -\delta_{t})}\\
    &= \frac{\mathbb{P}( N_{p_{t+1}} - N_{p_{t}} = \delta_{t} - \delta_{t+1})\mathbb{P}( N_{p_{t}} = (n-1)m -\delta_{t})}{\mathbb{P}(N_{p_{t}} = (n-1)m -\delta_{t})}\\
    &= \mathbb{P}( N_{p_{t+1}} - N_{p_{t}} = \delta_{t} - \delta_{t+1})\\
    &= e^{-\lambda(p_{t+1} - p_t)}\frac{(\lambda(p_{t+1} - p_t))^{\delta_{t} - \delta_{t+1}}}{(\delta_{t} - \delta_{t+1})!}\\
    &= e^{-\lambda\Delta P}\frac{(\lambda\Delta P)^{\delta_{t} - \delta_{t+1}}}{(\delta_{t} - \delta_{t+1})!}
\end{align*}

Now,

\begin{align*}
    \mathbb{P}_{0|\delta_t}^t &= \mathbb{P}((n-1)m - N_{p_{t+1} \wedge Z_1} = 0 | (n-1)m - N_{p_{t} \wedge Z_1} = \delta_t)\\
    &= \mathbb{P}(N_{p_{t+1} \wedge Z_1} = (n-1)m |N_{p_{t} \wedge Z_1} = (n-1)m - \delta_t)\\
    &= \mathbb{P}(N_{p_{t+1}} \geq (n-1)m | N_{p_t} = (n-1)m - \delta_t)\\
    &= \frac{\mathbb{P}(N_{p_{t+1}} \geq (n-1)m, N_{p_t} = (n-1)m - \delta_t)}{\mathbb{P}(N_{p_t} = (n-1)m - \delta_t)}\\
    &= \frac{\mathbb{P}(N_{p_{t+1}} - N_{p_t} \geq \delta_t, N_{p_t} = (n-1)m - \delta_t)}{\mathbb{P}(N_{p_t} = (n-1)m - \delta_t)}\\
    &= \frac{\mathbb{P}(N_{p_{t+1}} - N_{p_t} \geq \delta_t)\mathbb{P}(N_{p_t} = (n-1)m - \delta_t)}{\mathbb{P}(N_{p_t} = (n-1)m - \delta_t)}\\
    &= \mathbb{P}(N_{p_{t+1}} - N_{p_t} \geq \delta_t)\\
    &= \sum_{j = \delta_t}^{+\infty} \mathbb{P}(N_{p_{t+1}} - N_{p_t} = j)\\
    &= \sum_{j = \delta_t}^{+\infty} e^{-\lambda \Delta P}\frac{(\lambda \Delta P)^j}{j!}
\end{align*}

All in all, \begin{align*}
    \mathbb{P}(\delta(p_{t+1}) = \delta_{t+1} | \delta(p_t) = \delta_t ) &= \begin{cases}
        e^{-\lambda \Delta P}\sum_{j = \delta_t}^{+\infty} \frac{(\lambda \Delta P)^j}{j!} &\text{ if } \delta_{t+1} = 0\\
        e^{-\lambda \Delta P}\frac{(\lambda \Delta P) ^{\delta_t - \delta_{t+1}}}{(\delta_t - \delta_{t+1})!} &\text{ otherwise}
    \end{cases}
\end{align*}

\end{document}